\newtheorem{thm}{Theorem}[section]
\newtheorem{lem}[thm]{Lemma}
\theoremstyle{definition}
\newtheorem{rmk}{Remark}
\newcommand{\la}{\lambda}
\newcommand{\va}{\varphi}
\newcommand{\ww}{\widetilde{w}}
\newcommand{\wa}{\widetilde{a}}
\newcommand{\wb}{\widetilde{b}}
\newcommand{\wc}{\widetilde{c}}
\newcommand{\wm}{\widetilde{\mu}}
\newcommand{\wva}{\widetilde{\varphi}}
\newcommand{\wpsi}{\widetilde{\psi}}
\newcommand{\wf}{\widetilde{f}}
\newcommand{\wv}{\widetilde{v}}
\newcommand{\wy}{\widetilde{y}}
\newcommand{\cA}{\mathcal{A}}
\newcommand{\bb}{\mathbf{b}}
\newcommand{\bp}{\mathbf{p}}
\newcommand{\wF}{\widetilde{F}}
\newcommand{\wJ}{\widetilde{J}}
\DeclareMathOperator{\dd}{div}
\newcommand{\R}{\mathbb{R}}
\newcommand{\pp}{\partial}
\newcommand{\ba}{\begin{eqnarray*}}
\newcommand{\ea}{\end{eqnarray*}}
\newcommand{\be}{\begin{equation}}
\newcommand{\ee}{\end{equation}}
\newcommand{\bea}{\begin{eqnarray}}
\newcommand{\eea}{\end{eqnarray}}
\title[]{%
Lipschitz stability in inverse source and inverse coefficient problems 
for a first- and half-order time-fractional diffusion equation
}
\author{Atsushi Kawamoto}
\address{Department of Mathematical Sciences, The University of Tokyo, 
Komaba, Meguro, Tokyo 153-8914, Japan}
\email{kawamo@ms.u-tokyo.ac.jp}
\author{Manabu Machida}
\address{Institute for Medical Photonics Research, 
Hamamatsu University School of Medicine, Hamamatsu, Shizuoka 431-3192, Japan}
\email{machida@hama-med.ac.jp}
\date{}
\begin{document}
\begin{abstract}
We consider inverse problems for the first and half order time fractional equation.
We establish the stability estimates of Lipschitz type in inverse source and inverse coefficient problems by means of the Carleman estimates.
\end{abstract}
\maketitle



\section{Introduction}


Let $\Omega\subset \R^n$ be a bounded domain with the boundary $\pp\Omega$ of 
$C^2$ class. We set $Q=\Omega \times (0,T)$, where $T>0$. We use notations 
$\pp_t= \frac{\pp}{\pp t}$, $\pp_i=\frac{\pp}{\pp x_i}$ ($i=1,2,\ldots, n$). 
We also use the multi index $\alpha=(\alpha_1,\alpha_2, \ldots, \alpha_n)$ 
with $\alpha_j \in \mathbb{N}\cup\{0\}$ ($j=1,2,\ldots,n$), 
$\pp_x^\alpha =\pp_1^{\alpha_1}\pp_2^{\alpha_2} \cdots \pp_n^{\alpha_n}$, 
$|\alpha|=\alpha_1+\alpha_2+\cdots+\alpha_n$. Let $\nu =\nu (x)$ be the 
outwards unit normal vector to $\pp\Omega$ at $x$ and let 
$\pp_\nu =\nu \cdot \nabla$. In general, 
the $\beta$th order Caputo-type fractional derivative is defined by
\begin{equation*}
\pp_t^{\beta}u(x,t):=
\frac{1}{\Gamma\left(n-\beta\right)}\int_0^t\frac{1}{(t-\tau )^{\beta+1-n}}
\frac{\pp^nu(x,\tau)}{\pp\tau^n}\,d\tau,
\quad (x,t)\in Q,
\end{equation*}
for $n-1<\beta<n$, $n\in\mathbb{N}$ (See e.g., \cite{Caputo1967, Pod}). 
Here, $\Gamma$ is the gamma function. 
We consider the following first and half 
order time-fractional diffusion equation.
\begin{align}
\label{eq:1+1/2eq}
&
(\rho_1 \pp_t+\rho_2\pp_t^{\frac12}  - L)u(x,t)=g(x,t),
&
(x,t)\in Q,\\
\label{eq:b_condi}
&
u(x,t)=h_1(x,t),
&
(x,t)\in\pp\Omega\times(0,T),
\\
\label{eq:ini_condi}
&
u(x,0)=h_2(x),& x\in \Omega,
\end{align}
where $\rho_1>0$, $\rho_2 \neq 0$ are constants, and $L$ is a symmetric 
uniformly elliptic operator given by
\begin{equation}
\nonumber
L u(x,t) :=\sum_{i,j=1}^n \pp_i (a_{ij}(x) \pp_j u(x,t))
-\sum_{j=1}^n b_j (x)\pp_j u(x,t)
- c(x)u(x,t),\ (x,t) \in Q.
\end{equation}
We assume that $a_{ij}\in C^3(\overline{\Omega})$, 
$a_{ij}=a_{ji}$ ($ i,j=1,2,\ldots, n$), 
$b_j \in  C^2(\overline{\Omega})$ ($j=1,2,\ldots, n$), 
$c \in C^2(\overline{\Omega})$, and moreover there exists a constant 
$\mu>0$ such that
\begin{equation}
\nonumber
\frac1{\mu} |\xi|^2
\leq
\sum_{i,j=1}^n a_{ij}(x) \xi_i \xi_j
\leq
\mu |\xi|^2,
\quad
\xi=(\xi_1,\ldots, \xi_n) \in \R^n,\
x \in \overline{\Omega}.
\end{equation}

In fluid dynamics, (\ref{eq:1+1/2eq}) appears in the Basset problem 
\cite{Basset1910} when the motion of a particle in a nonuniform flow is 
considered \cite{Brennen05,Langlois-Farazmand-Haller15,Maxey-Riley83}. 
The first and half order time-fractional equation (\ref{eq:1+1/2eq}) also appears in porous media. 
Starting with the microscopic diffusion in a heterogeneous medium which has 
two length scales: the microscopic length scale of a typical porous block and 
the relative fracture width, a diffusion equation with the first- and 
half-order time derivatives is obtained at the large scale limit by the 
homogenization process \cite{Amaziane-etal04,APP}.

The first- and half-order equation (\ref{eq:1+1/2eq}) is one of parabolic 
equations with multiple time-fractional terms, i.e., the time-derivative part 
in the equation is given by $\sum_{j=0}^{\ell}p_j\pp_t^{\alpha_j}$, where 
$0<\alpha_{\ell}<\cdots<\alpha_1<\alpha_0\le 1$ and coefficients $p_j$ 
generally depend on $x$. Initial-boundary-value problems for multi-term 
time-fractional diffusion equations were considered in \cite{Luchko11}. 
In the case that all time-derivatives are non-integer order and the 
time-derivative part is given by $\sum_{j=1}^{\ell}p_j\pp_t^{\alpha_j}$, 
the well-posedness was investigated \cite{Li-Liu-Yamamoto15} and moreover 
the uniqueness in inverse boundary-value problems was proven 
\cite{Li-Imanuvilov-Yamamoto16}. An exact solution was obtained in the 
special case of a two-term time-fractional diffusion equation 
\cite{Bazhlekova-Dimovski14}. 
The uniqueness for two kinds of inverse problems of identifying fractional 
orders in diffusion equations with multiple time-fractional derivatives was 
proved \cite{Li-Yamamoto15}. The uniqueness in determining the spatial 
component of the source term from interiror observation was established 
\cite{Jiang-Li-Liu-Yamamoto17}. The maximum principle and uniqueness was considered in \cite{Liu17} for the determination of the temporal component of the source term from a single point observation. Also the unique continuation was considered for multi-term time-fractional diffusion equations \cite{Lin-Nakamura18}.

In \cite{Kwa}, the H\"{o}lder stability is proven for the inverse 
source problem of (\ref{eq:1+1/2eq}) (See also \cite{Li-Huang-Yamamoto}). 
In this paper, we further prove the 
Lipshitz stability not only for the inverse source problem but also for 
the inverse coefficient problem for (\ref{eq:1+1/2eq}).


The methodology of our stability analysis is based on the technique of the 
Carleman estimate \cite{Carleman1939}, which was pioneered by 
Bukhgeim and Klibanov \cite{Bukhgeim-Klibanov81} when they proved the global 
uniqueness in inverse problems. See also \cite{Klibanov84,Klibanov92}, recent 
reviews \cite{Klibanov13,Yamamoto09}, and textbooks
\cite{Isakov06,Klibanov-Timonov04}. The Carleman estimate is a weighted $L^2$ 
inequality for a solution of a partial differential equation. In the case 
of parabolic equations with one first-order time derivative, the global 
Lipschitz stability was proven by using this method of Carleman estimates 
\cite{Imanuvilov-Yamamoto98}. In this paper we make use of the Carleman 
estimate for parabolic equations. 
The Carleman estimates have been used for 
differential equations with a single term time-fractional derivative 
\cite{Kawamoto18, Xu-Cheng-Yamamoto11,Yamamoto-Zhang12}.

This paper is organized as follows. In \S\ref{stability}, inverse source 
problems are considered. In \S\ref{porous}, inverse coefficient problems 
are considered. In \S\ref{carleman}, the Carleman estimate necessary for 
our paper is established. Finally, proofs of the main theorems are given 
in \S\ref{proof}.

\section{Inverse source problems}
\label{stability}

We consider the inverse problems of determining the time-independent source factor of \eqref{eq:1+1/2eq}
from spatial data and two types of observations. 
One is the boundary observation and the other is the interior observation. 

Let $t_0\in (0,T)$ be an arbitrarily fixed time.
Let $\gamma$ be an arbitrarily fixed open connected sub-boundary of $\pp\Omega$ 
and let $\omega$ be an arbitrary fixed sub-domain of $\Omega$ such that $\omega\Subset\Omega$. 
We set $\Sigma=\gamma \times (0,T)$ and $Q_\omega=\omega\times (0,T)$. 

Moreover we choose $\delta>0$ such that 
\begin{equation*}
0<t_0-\delta <t_0 <t_0+\delta <T,
\end{equation*}
and we set $Q_\delta =\Omega\times (t_0-\delta,t_0+\delta)$, $\Sigma_\delta=\gamma\times (t_0-\delta,t_0+\delta)$, 
$Q_{\omega,\delta}=\omega \times  (t_0-\delta,t_0+\delta)$. 

We assume that
\begin{equation}
\label{eq:R}
\left\{
\begin{aligned}
&R\in
C([0,T);C (\overline{\Omega}))\cap
C^2((0,T)
;C^2(\overline{\Omega}))\cap C^3((0,T)
;C(\overline{\Omega})), \\
&\pp_t^{\frac12} R \in C^2((0,T)
;C(\overline{\Omega})) 
\text{ and}\ |R(x,t_0)|>0,\  x\in\overline{\Omega}.
\end{aligned}
\right.
\end{equation}
Furthermore we define
\begin{equation*}
\mathcal{U}=L^2(0,T;H^4(\Omega))\cap H^1(0,T;H^2(\Omega))\cap H^2(0,T;L^2(\Omega)).
\end{equation*}
Let us assume that $g(x,t)$ in (\ref{eq:1+1/2eq}) has the form
\begin{equation}
\label{eq:gfR}
g(x,t)=f(x)R(x,t),
\end{equation}
and set $h_1=0$ in $Q$, $h_2=0$ in $\Omega$.

We consider
\begin{align}
\label{eq:eq01}
&
(\rho_1\pp_t +\rho_2\pp_t^{\frac12} - L)u(x,t)=f(x)R(x,t),&
(x,t)\in Q,
\\
\label{eq:eq03}
&
u(x,t)=0,& (x,t)\in \pp\Omega \times (0,T),
\\
\label{eq:eq02}
&
u(x,0)=0,& x\in \Omega,
\end{align}
and we investigate the two kinds of inverse problems depending on the way of observations. 

In the inverse source problem via boundary observation, we determine $f(x)$, 
$x\in \Omega$ by spatial data $u(x,t_0)$, $x\in \Omega$ and boundary data on 
$\Sigma$. In the inverse source problem via interior observation, we 
determine $f(x)$, $x\in \Omega$ by spatial data $u(x,t_0)$, $x\in \Omega$ and 
interior data in $Q_\omega$. The main theorems Theorem \ref{thm:ispb} and 
Theorem \ref{thm:ispi} are stated as follows.

\begin{thm}
\label{thm:ispb}
Let us assume that $u,\pp_tu,\pp_t^2u\in\mathcal{U}$ and $u$ satisfies \eqref{eq:eq01}--\eqref{eq:eq02}
. We suppose that $f\in H^2(\Omega)$ with $f=0$ on $\pp\Omega$ and $\nabla f=0$ on $\gamma$ and $R$ satisfies \eqref{eq:R}.
Then there exist constants $C>0$ such that 
\begin{equation}
\label{eq:seb}
\| f \|_{H^2(\Omega)}
\leq
C
\|u(\cdot, t_0)\|_{H^4(\Omega)}+CB, 
\end{equation}
where 
\begin{equation*}
B=\|\nabla \pp_t^3 u\|_{L^2(\Sigma_\delta)}
+\|\nabla \pp_t^{\frac52} u\|_{L^2(\Sigma_\delta)}
+\|\nabla \pp_t^2 u\|_{L^2(\Sigma_\delta)}
+\|\nabla \pp_t^{\frac32} u\|_{L^2(\Sigma_\delta)}
+\|\nabla \pp_t u\|_{L^2(\Sigma_\delta)}.
\end{equation*}
\end{thm}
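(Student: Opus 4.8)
The plan is to run the Bukhgeim--Klibanov scheme on top of the parabolic Carleman estimate of \S\ref{carleman}, treating the half-order term $\rho_2\pp_t^{\frac12}u$ as a lower-order perturbation that is moved to the right-hand side. The starting point is the pointwise identity obtained by evaluating \eqref{eq:eq01} at $t=t_0$: since $|R(x,t_0)|>0$ on $\overline\Omega$ and $R$ is smooth by \eqref{eq:R}, I can solve for the source factor,
\begin{equation*}
f(x)=\frac{1}{R(x,t_0)}\left(\rho_1\pp_tu(x,t_0)+\rho_2\pp_t^{\frac12}u(x,t_0)-Lu(x,t_0)\right),
\end{equation*}
so that, taking $H^2(\Omega)$ norms and using $\|Lu(\cdot,t_0)\|_{H^2(\Omega)}\le C\|u(\cdot,t_0)\|_{H^4(\Omega)}$, the theorem reduces to estimating $\|\pp_tu(\cdot,t_0)\|_{H^2(\Omega)}$ and $\|\pp_t^{\frac12}u(\cdot,t_0)\|_{H^2(\Omega)}$ by $\|u(\cdot,t_0)\|_{H^4(\Omega)}+B$.

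Next I would differentiate \eqref{eq:eq01} in time. Writing $w=\pp_t^k u$ for $k=1,2$ and using the vanishing initial data together with the commutation of $\pp_t$ and $\pp_t^{\frac12}$, each $w$ solves
\begin{equation*}
(\rho_1\pp_t+\rho_2\pp_t^{\frac12}-L)w=f\,\pp_t^k R,
\end{equation*}
and the regularity $u,\pp_tu,\pp_t^2u\in\mathcal U$ makes these manipulations legitimate and supplies the traces in $B$. I then apply the parabolic Carleman estimate to $w$ on $Q_\delta$ with a weight $\varphi$ peaked at $t=t_0$, after moving $\rho_2\pp_t^{\frac12}w$ to the right-hand side. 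The two levels $k=1,2$ produce exactly the boundary quantities in $B$: the lateral traces $\nabla\pp_t w$ and $\nabla\pp_t^{\frac12}w$ arising in the energy identities yield $\nabla\pp_t^2u,\nabla\pp_t^{\frac32}u$ and $\nabla\pp_t^3u,\nabla\pp_t^{\frac52}u$, while the Carleman boundary terms of $w$ themselves give $\nabla\pp_tu$ and $\nabla\pp_t^2u$. Because $f=0$ on $\pp\Omega$ and $\nabla f=0$ on $\gamma$, all contributions off $\gamma$ collapse into these observable terms.

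The Bukhgeim--Klibanov step is the slice identity at $t=t_0$,
\begin{equation*}
\int_\Omega|w(x,t_0)|^2e^{2s\varphi(x,t_0)}\,dx
=\int_{t_0-\delta}^{t_0}\frac{d}{dt}\left(\int_\Omega|w|^2e^{2s\varphi}\,dx\right)dt
+\int_\Omega|w(x,t_0-\delta)|^2e^{2s\varphi(x,t_0-\delta)}\,dx,
\end{equation*}
whose last term is exponentially small because $\varphi(\cdot,t_0-\delta)$ is dominated by $\varphi(\cdot,t_0)$. Substituting the equation for $\pp_tw$, bounding the space--time integrals by the Carleman estimate, and using that $f$ is $t$-independent while $\int_{t_0-\delta}^{t_0+\delta}e^{2s\varphi(x,t)}\,dt\le Cs^{-1}e^{2s\varphi(x,t_0)}$, I reduce everything to $s^{-1}\int_\Omega|f|^2e^{2s\varphi(\cdot,t_0)}$ plus terms controlled by $B$. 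Feeding the slice formula for $f$ from the first paragraph back in and taking $s$ large absorbs the $f$-integral into the left-hand side; repeating the argument for $\pp_iw$ and $\pp_i\pp_jw$ (again using $\nabla f=0$ on $\gamma$) upgrades the conclusion from $L^2$ to $H^2$, and fixing $s$ converts the weighted inequalities into \eqref{eq:seb}.

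The main obstacle is the nonlocal half-order term. Since $\pp_t^{\frac12}w(x,t)=\frac{1}{\Gamma(1/2)}\int_0^t(t-\tau)^{-\frac12}\pp_\tau w(x,\tau)\,d\tau$ integrates over the whole history, its values on $Q_\delta$ depend on $w$ for $\tau\in(0,t_0-\delta)$, i.e. outside the Carleman window. I would split the integral as $\int_0^{t_0-\delta}+\int_{t_0-\delta}^{t}$: on the first piece the kernel $(t-\tau)^{-\frac12}$ is smooth and bounded for $t$ near $t_0$, so that contribution is a smooth function controlled by the data, while the second piece is local to $Q_\delta$ and, being of time-order $\tfrac12<1$, is dominated by the principal parabolic part of the Carleman estimate for large $s$ and hence absorbable. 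Making this absorption quantitative --- i.e. bounding the weighted norm of $\pp_t^{\frac12}w$ by the left-hand side of the Carleman estimate plus lower-order history terms --- together with justifying $\pp_t\pp_t^{\frac12}=\pp_t^{\frac12}\pp_t$ under the zero initial conditions, is the technical heart of the proof.
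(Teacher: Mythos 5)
Your overall Bukhgeim--Klibanov architecture (time differentiation, Carleman estimate on the window $(t_0-\delta,t_0+\delta)$, slice identity at $t=t_0$, absorption for large $s$) matches the paper's, but your treatment of the half-order term is a genuine gap, and it is exactly the point where the paper does something different. Your reduction bounds $\|f\|_{H^2(\Omega)}$ by $\|\pp_t^{\frac12}u(\cdot,t_0)\|_{H^2(\Omega)}$, and your interior Carleman step needs weighted bounds for $\pp_t^{\frac12}w$ on $Q_\delta$; both objects are nonlocal in time. Indeed $\pp_t^{\frac12}u(x,t_0)=\frac{1}{\sqrt{\pi}}\int_0^{t_0}(t_0-\tau)^{-\frac12}\pp_\tau u(x,\tau)\,d\tau$ depends on interior values of $\pp_\tau u$ on all of $(0,t_0-\delta)$, which are not among the data (the data are $u(\cdot,t_0)$ in $\Omega$ and traces on $\Sigma_\delta$ only). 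Your claim that the early-history piece is ``a smooth function controlled by the data'' is unsubstantiated: it is controlled only by a priori interior bounds on $u$, which would degrade \eqref{eq:seb} to a conditional estimate with an extra additive term. The proposed absorption of the local piece of $\rho_2\pp_t^{\frac12}w$ also fails: the weight $e^{2s\psi_{\delta,1}}$ is non-monotone in $t$ and tends to zero at both endpoints of the window, so for $t$ in the right half of the window the memory integral draws on times $\tau$ near the left endpoint where the weight is exponentially smaller; the ratio $e^{2s(\psi_{\delta,1}(x,t)-\psi_{\delta,1}(x,\tau))}$ blows up as $s\to\infty$, no $s$-uniform weighted convolution bound holds, and the ``order $\tfrac12<1$, hence absorbable for large $s$'' heuristic breaks down precisely where it is needed.

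The paper avoids both problems by never estimating $\pp_t^{\frac12}u$ in the interior: Lemma \ref{lem:halftoone} applies $\rho_2\pp_t^{\frac12}-(\rho_1\pp_t-L)$ to \eqref{eq:eq01} and, using $u(\cdot,0)=0$, produces the purely integer-order equation $\rho_2^2\pp_tu-(\rho_1\pp_t-L)^2u=F$, in which the fractional operator survives only through $\pp_t^{\frac12}R$ (known by \eqref{eq:R}) and through the boundary traces $\nabla\pp_t^{\frac12}u$ on $\Sigma$ (part of $B$, via $w=\rho_1\pp_tu-Lu=g-\rho_2\pp_t^{\frac12}u$ on the boundary); Theorem \ref{thm:ce0b} is the Carleman estimate for this composed operator. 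Note also that in the paper's route $F(\cdot,t_0)$ contains $Lf$, so $f$ is recovered not by pointwise division but by applying the elliptic Carleman estimate of Lemma \ref{lem:celemeb} to the second-order elliptic equation \eqref{eq:pr17} satisfied by $f$ --- this is where $f=0$ on $\pp\Omega$ and $\nabla f=0$ on $\gamma$ actually enter. Finally, a smaller inaccuracy: $\pp_t$ and the Caputo derivative $\pp_t^{\frac12}$ do not commute even with $u(\cdot,0)=0$, since $\pp_t\pp_t^{\frac12}u=\pp_t^{\frac12}\pp_tu+\pp_tu(x,0)\,t^{-\frac12}/\Gamma(\tfrac12)$ and $\pp_tu(\cdot,0)=fR(\cdot,0)/\rho_1\neq0$ in general; your differentiated equations therefore miss a singular source proportional to $f(x)R(x,0)t^{-\frac12}$, the analogue of the term $\rho_2 g(x,0)/\sqrt{\pi t}$ in \eqref{eq:lem02}.
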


\begin{thm}
\label{thm:ispi}
Let us assume that $u,\pp_t u,\pp_t^2u\in\mathcal{U}$ and $u$ satisfies \eqref{eq:eq01}--\eqref{eq:eq02}
. 
We suppose that $f\in H^2(\Omega)$ with $f=0$ on $\pp\Omega$ and $f=0$ in $\omega$
and $R$ satisfies \eqref{eq:R}.
Then there exist constants $C>0$ such that 
\begin{equation}
\label{eq:sei}
\| f \|_{H^2(\Omega)}
\leq
C
\|u(\cdot, t_0)\|_{H^4(\Omega)}+CI, 
\end{equation}
where 
\begin{equation*}
I=\| \pp_t^3 u\|_{L^2(Q_{\omega,\delta})}
+\|\pp_t^{\frac52} u\|_{L^2(Q_{\omega,\delta})}
+\|\pp_t^2 u\|_{L^2(Q_{\omega,\delta})}
+\|\pp_t^{\frac32} u\|_{L^2(Q_{\omega,\delta})}
+\|\pp_t u\|_{L^2(Q_{\omega,\delta})}.
\end{equation*}
\end{thm}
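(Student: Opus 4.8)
The plan is to run the Bukhgeim--Klibanov scheme on top of the parabolic Carleman estimate of \S\ref{carleman}, regarding $\rho_2\pp_t^{\frac12}$ as a lower-order perturbation of the parabolic operator $\rho_1\pp_t-L$. First I would evaluate \eqref{eq:eq01} at $t=t_0$. Since $R(\cdot,t_0)\in C^2(\overline{\Omega})$ and $|R(\cdot,t_0)|>0$ on the compact set $\overline{\Omega}$ by \eqref{eq:R}, division by $R(\cdot,t_0)$ preserves $H^2$, so
\begin{equation*}
f=\frac{1}{R(\cdot,t_0)}\left(\rho_1\pp_t u(\cdot,t_0)+\rho_2\pp_t^{\frac12}u(\cdot,t_0)-Lu(\cdot,t_0)\right).
\end{equation*}
As $\|Lu(\cdot,t_0)\|_{H^2(\Omega)}\le C\|u(\cdot,t_0)\|_{H^4(\Omega)}$, the estimate \eqref{eq:sei} is reduced to bounding the spatial $H^2$ norms of $\pp_t u(\cdot,t_0)$ and $\pp_t^{\frac12}u(\cdot,t_0)$ by $\|u(\cdot,t_0)\|_{H^4(\Omega)}$ and $I$.

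Next I would differentiate \eqref{eq:eq01} twice in $t$. Using $u(\cdot,0)=0$ and the regularity $u,\pp_t u,\pp_t^2u\in\mathcal U$ to justify the Caputo composition $\pp_t^{\frac12}\pp_t^k u=\pp_t^{k+\frac12}u$, the function $z:=\pp_t^2u$ satisfies the parabolic equation
\begin{equation*}
\rho_1\pp_t z-Lz=f\,\pp_t^2R-\rho_2\pp_t^{\frac12}z\quad\text{in }Q,\qquad z=0\ \text{on }\pp\Omega\times(0,T),
\end{equation*}
while $\pp_t u$ solves the analogous equation with source $f\pp_tR-\rho_2\pp_t^{\frac32}u$. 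I would fix a temporal cutoff $\chi$ with $\chi\equiv1$ near $t_0$ and $\supp\chi\subset(t_0-\delta,t_0+\delta)$, and use a Carleman weight whose time profile is maximal at $t=t_0$, so that $e^{2s\varphi}$ is largest precisely at the observation time and the terms generated where $\chi'\neq0$ carry an exponentially smaller weight. Applying the interior-data Carleman estimate of \S\ref{carleman} to $\chi z$ on $Q_\delta$ produces, on the right-hand side, the weighted source $\int_{Q_\delta}(|f\pp_t^2R|^2+|\pp_t^{\frac12}z|^2)e^{2s\varphi}$ together with the interior observation $\int_{Q_{\omega,\delta}}|z|^2e^{2s\varphi}$.

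The Bukhgeim--Klibanov step links the value at $t_0$ to these weighted integrals: since $\chi$ vanishes at $t_0-\delta$,
\begin{equation*}
\int_\Omega\phi(x,t_0)^2e^{2s\varphi(x,t_0)}\,dx=\int_{t_0-\delta}^{t_0}\!\!\int_\Omega\pp_t\!\left(\chi^2\phi^2e^{2s\varphi}\right)dx\,dt,
\end{equation*}
and expanding $\pp_t$ and applying Cauchy--Schwarz bounds the left side by $s$-powers of the weighted $L^2(Q_\delta)$ norms of $\phi$ and $\pp_t\phi$. Taking $\phi\in\{u,\pp_t^{\frac12}u,\pp_t u,z\}$ brings in $\pp_t\phi\in\{\pp_t u,\pp_t^{\frac32}u,z,\pp_t^3u\}$, which together with the Carleman observation of $z$ and the source term $\pp_t^{\frac12}z=\pp_t^{\frac52}u$ accounts for exactly the five norms of $I$ once everything is localized to $\omega$ by the hypothesis $f=0$ in $\omega$. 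The source $\int_{Q_\delta}|f\pp_t^2R|^2e^{2s\varphi}$ is itself controlled, through the pointwise reduction of the first paragraph and the same $\pp_t$-integration, by the Carleman left-hand side times a negative power of $s$, so that a large choice of $s$ absorbs it. Finally, elliptic regularity for $L$ (with $\pp_t u=0$ on $\pp\Omega$) upgrades the resulting $L^2(\Omega)$-in-space control at $t=t_0$ to the $H^2(\Omega)$ control required in the first paragraph, at the cost of one extra time derivative, which is why $I$ reaches order three; fixing $s$ and dropping the now-bounded weight then yields \eqref{eq:sei}.

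The hard part will be the nonlocality of $\pp_t^{\frac12}$. In the Caputo derivative $\pp_t^{\frac12}z(x,t)=\frac{1}{\Gamma(1/2)}\int_0^t(t-\tau)^{-\frac12}\pp_\tau z(x,\tau)\,d\tau$ the integration reaches back to $\tau<t_0-\delta$, outside $Q_\delta$, where neither the Carleman weight nor the observation is of help. The decisive ingredient is therefore a weighted bound of the form $\int_{Q_\delta}|\pp_t^{\frac12}z|^2e^{2s\varphi}\le C\int_{Q}|\pp_t z|^2e^{2s\varphi}$ (and its analogue for $\pp_t^{\frac32}u$); once this is in place the early-time tail carries a strictly smaller weight than the peak at $t_0$ and is absorbed for large $s$. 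This is precisely the mechanism that forces the half-order norms $\pp_t^{\frac32}u$ and $\pp_t^{\frac52}u$ to appear in $I$. The boundary-observation Theorem \ref{thm:ispb} would follow the identical scheme, with the interior-data Carleman estimate replaced by its boundary counterpart and $f=0$ in $\omega$ replaced by $\nabla f=0$ on $\gamma$, the latter serving to discard the boundary terms generated by differentiation and localization.
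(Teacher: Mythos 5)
Your overall scaffolding (Bukhgeim--Klibanov time-integration at $t_0$, interior Carleman estimate, absorption for large $s$) matches the paper, but your treatment of the half-order term is not what the paper does, and it contains the fatal gap that you yourself flag: the weighted bound
\begin{equation*}
\int_{Q_\delta}|\pp_t^{\frac12}z|^2e^{2s\varphi}\,dx\,dt
\le C\int_{Q}|\pp_t z|^2e^{2s\varphi}\,dx\,dt
\end{equation*}
with $C$ independent of $s$ is not merely unproven --- it is false in the regime you need. Writing out the Caputo kernel, one must dominate the factor $(t-\tau)^{-\frac12}e^{s(\psi(x,t)-\psi(x,\tau))}$ for $\tau$ near $0$ and $t$ near $t_0$; since the weight is maximal in time at $t_0$ while $\psi(x,\tau)\to-\infty$ as $\tau$ approaches the endpoints, this factor blows up exponentially as $s\to\infty$. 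Your intuition that ``the early-time tail carries a strictly smaller weight and is absorbed for large $s$'' is exactly backwards: the tail contributes to $\pp_t^{\frac12}z$ at times where the left-hand weight is largest, but on the right-hand side it would be measured with an exponentially small weight, so the inequality can hold only with $C(s)$ growing in $s$, which destroys the absorption argument. (Even granting the bound, absorbing $\int|\pp_t z|^2e^{2s\psi}$, carrying no negative power of $s$, into the term $(s\varphi)^{-1}|\pp_t z|^2$ on the left of the parabolic Carleman estimate would still fail by a factor of $s\varphi$.)

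The paper eliminates the nonlocal term instead of estimating it: by Lemma \ref{lem:halftoone} one applies $\rho_2\pp_t^{\frac12}-(\rho_1\pp_t-L)$ to \eqref{eq:eq01} and uses $u(\cdot,0)=0$, so that $\pp_t^{\frac12}\pp_t^{\frac12}u$ collapses to $\pp_t u$ up to the explicit term $\rho_2 g(x,0)/\sqrt{\pi t}$, yielding the purely integer-order equation $\rho_2^2\pp_t u-(\rho_1\pp_t-L)^2u=F$, where $F$ in \eqref{eq:pr02} contains $\pp_t^{\frac12}R$ applied only to the \emph{known} coefficient $R$ together with spatial derivatives of $f$ up to order two. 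The Carleman estimate (Theorem \ref{thm:ce0i}, built by iterating the parabolic Lemma \ref{lem:celempi}) is then proved for this local operator, and half-order derivatives of $u$ survive only as observation data on $Q_{\omega,\delta}$ --- which is precisely where $\pp_t^{\frac32}u$ and $\pp_t^{\frac52}u$ in $I$ come from. Your endgame also diverges: dividing \eqref{eq:eq01} at $t=t_0$ by $R(\cdot,t_0)$ and invoking elliptic regularity would require $H^2(\Omega)$ control of $\pp_t^{\frac12}u(\cdot,t_0)$, a quantity your scheme never bounds and which does not appear among the norms in $I$. The paper instead reads off from $F(\cdot,t_0)$ the second-order elliptic equation \eqref{eq:pr17} satisfied by $f$ itself and applies the elliptic Carleman estimate (Lemma \ref{lem:celemei} in the interior-data case); this is exactly where the hypotheses $f=0$ on $\pp\Omega$ and $f=0$ in $\omega$ enter and where $\|f\|_{H^2(\Omega)}$ is produced, followed by the absorption of the weighted $f$-terms via the uniform decay of $h_s$ (Lebesgue plus Dini). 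Without replacing your perturbative treatment by this composition trick, or supplying a correct substitute for the weighted fractional estimate, the proof does not close.
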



\begin{rmk}
There is another approach to obtain the Lipschitz stability in inverse source problems by final observation data. 
In \cite{Sakamoto-Yamamoto11}, 
Sakamoto and Yamamoto considered the perturbation of 
the single term time-fractional diffusion equations with a parameter as the diffusion coefficient
and they obtained the stability estimate 
by means of the analytic perturbation theory 
under the appropriate assumptions on the parameter. 
In our case, however, we may not adopt their methodology directly 
since we consider the diffusion coefficient without the perturbation. 
\end{rmk}

\section{Inverse coefficient problems}
\label{porous}

\subsection{Determination of the zeroth-order coefficient}

Let us consider the inverse problem of determining the zeroth-order coefficient. 
In \eqref{eq:1+1/2eq}, we consider two coefficients $c_k(x)$, $x\in \Omega$ ($k=1,2$), where $c_k\in C^2(\overline{\Omega})$, $c_k(x)\ge0$, $x\in\Omega$ ($k=1,2$). Let $u_k(x,t)$ be the corresponding solutions. We write $L$ as
\begin{equation}
Lu_k(x,t)=Au_k(x,t)-c_k(x)u_k(x,t),
\label{eq:LAck}
\end{equation}
where $A$ is defined as
\[
Au(x,t)=\sum_{i,j=1}^n\pp_i(a_{ij}(x)\pp_ju(x,t))
-\sum_{j=1}^nb_j(x)\pp_ju(x,t), \quad (x,t) \in Q.
\]

By subtraction we obtain
\[
\left\{\begin{aligned}
&
\left(\rho_1\pp_t+\rho_2\pp_t^{\frac12}-A+c_1(x)\right)u(x,t)=f(x)R(x,t),
& (x,t)\in Q,
\\
&u(x,t)=0,
& (x.t)\in \pp\Omega\times(0,T),
\\
&u(x,0)=0,
& x\in \Omega,
\end{aligned}\right.
\]
where
\[
u(x,t)=u_1(x,t)-u_2(x,t),\quad
f(x)=c_1(x)-c_2(x),\quad
R(x,t)=-u_2(x,t)
\]
for $(x,t)\in Q$. 
Thus we arrive at the following two theorems as a direct consequence of the 
inverse source problems. In both cases the Lipschitz stability is obtained. 
Theorem \ref{thm:homo1} is proved using the inverse source problem via 
boundary observation stated in Theorem \ref{thm:ispb}. Theorem 
\ref{thm:homo2} is proved using the inverse source problem via interior 
observation stated in Theorem \ref{thm:ispi}. 

\begin{thm}[boundary observation]
\label{thm:homo1}
Let $u_k,\pp_tu_k,\pp_t^2u_k\in\mathcal{U}$ ($k=1,2$) and $u_1,u_2$ satisfy \eqref{eq:1+1/2eq}--\eqref{eq:ini_condi} with \eqref{eq:LAck}. We suppose that $c_1,c_2\in C^2(\overline{\Omega})
$ with $c_1=c_2$ on $\pp\Omega$ and 
$\nabla c_1=\nabla c_2$ on $\gamma$, and $R=-u_2$ satisfies \eqref{eq:R}.
Then there exist constants $C>0$ such that 
\begin{equation}
\label{porous:seb}
\|c_1-c_2\|_{H^2(\Omega)}
\leq
C
\|u_1(\cdot,t_0)-u_2(\cdot,t_0)\|_{H^4(\Omega)}+CB, 
\end{equation}
where 
\begin{align*}
B&=\|\nabla\pp_t^3(u_1-u_2)\|_{L^2(\Sigma_\delta)}
+\|\nabla\pp_t^{\frac52}(u_1-u_2)\|_{L^2(\Sigma_\delta)}
+\|\nabla\pp_t^2(u_1-u_2)\|_{L^2(\Sigma_\delta)}
\\
&
+\|\nabla\pp_t^{\frac32}(u_1-u_2)\|_{L^2(\Sigma_\delta)}
+\|\nabla\pp_t(u_1-u_2)\|_{L^2(\Sigma_\delta)}.
\end{align*}
\end{thm}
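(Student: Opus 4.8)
The plan is to deduce this theorem directly from Theorem \ref{thm:ispb}, exploiting the reduction already indicated by the subtraction of the two governing equations. First I would record the computation explicitly: with $u=u_1-u_2$, $f=c_1-c_2$, and $R=-u_2$, the algebraic identity $c_1u_1-c_2u_2=c_1(u_1-u_2)+(c_1-c_2)u_2$ turns the difference of the equations for $u_1$ and $u_2$ into
\begin{equation*}
(\rho_1\pp_t+\rho_2\pp_t^{\frac12}-A+c_1)u=fR
\quad\text{in }Q,
\end{equation*}
together with $u=0$ on $\pp\Omega\times(0,T)$ and $u(\cdot,0)=0$ in $\Omega$; here the sign convention $R=-u_2$ is exactly what is needed to produce $fR$ (rather than $-fR$) on the right-hand side. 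The key observation is that the operator $-A+c_1=-(A-c_1)$ is again of the form $-L$, with the same principal and first-order parts $a_{ij},b_j$ and with zeroth-order coefficient $c_1$. Since $c_1\in C^2(\overline{\Omega})$, this $L$ meets the structural hypotheses imposed in the introduction, so the reduced system is a genuine instance of \eqref{eq:eq01}--\eqref{eq:eq02}.

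Next I would verify, one by one, that the hypotheses of Theorem \ref{thm:ispb} hold for this instance. The regularity $u,\pp_tu,\pp_t^2u\in\mathcal{U}$ follows from linearity together with the assumption $u_k,\pp_tu_k,\pp_t^2u_k\in\mathcal{U}$ ($k=1,2$). The membership $f=c_1-c_2\in H^2(\Omega)$ is immediate from $c_1,c_2\in C^2(\overline{\Omega})$. The boundary conditions translate verbatim: $f=0$ on $\pp\Omega$ is precisely $c_1=c_2$ on $\pp\Omega$, and $\nabla f=0$ on $\gamma$ is precisely $\nabla c_1=\nabla c_2$ on $\gamma$. Finally, the requirement that $R$ satisfy \eqref{eq:R} is nothing but the standing assumption that $R=-u_2$ satisfies \eqref{eq:R}; in particular the nondegeneracy $|u_2(x,t_0)|>0$ on $\overline{\Omega}$ supplies the positivity of the source factor demanded there.

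With every hypothesis in place, applying Theorem \ref{thm:ispb} to the reduced system yields
\begin{equation*}
\|f\|_{H^2(\Omega)}\le C\|u(\cdot,t_0)\|_{H^4(\Omega)}+CB,
\end{equation*}
and substituting back $f=c_1-c_2$ and $u=u_1-u_2$ gives exactly \eqref{porous:seb}, with the quantity $B$ expressed through $u_1-u_2$ as stated.

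Because the estimate is inherited wholesale from the source problem, there is no substantive analytic step here; the entire content lies in the reduction, and the Carleman machinery is never invoked afresh. The one point demanding care is the bookkeeping of the conditions: I must confirm that $c_1=c_2$ on $\pp\Omega$ and $\nabla c_1=\nabla c_2$ on $\gamma$ are \emph{exactly} the hypotheses $f=0$ on $\pp\Omega$ and $\nabla f=0$ on $\gamma$ of Theorem \ref{thm:ispb}, that the regularity class $\mathcal{U}$ is stable under subtraction, and above all that replacing the generic zeroth-order coefficient by $c_1$ does not disturb the structural assumptions on $L$, so that Theorem \ref{thm:ispb} applies without modification.
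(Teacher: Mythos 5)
Your proposal is correct and coincides with the paper's own argument: the paper proves Theorem \ref{thm:homo1} by precisely the same reduction, writing $c_1u_1-c_2u_2=c_1(u_1-u_2)+(c_1-c_2)u_2$ so that $u=u_1-u_2$, $f=c_1-c_2$, $R=-u_2$ satisfy the inverse source system, and then invoking Theorem \ref{thm:ispb} directly. Your explicit verification of the hypotheses (regularity of $u$ by linearity, $f\in H^2(\Omega)$, the boundary conditions on $f$, and condition \eqref{eq:R} for $R=-u_2$) is exactly the bookkeeping the paper leaves implicit.
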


\begin{thm}[interior observation]
\label{thm:homo2}
Let $u_k,\pp_tu_k,\pp_t^2u_k\in\mathcal{U}$ ($k=1,2$) and $u_1,u_2$ satisfy \eqref{eq:1+1/2eq}--\eqref{eq:ini_condi} with \eqref{eq:LAck}. We suppose that $c_1,c_2\in C^2(\overline{\Omega})
$ 
with $c_1=c_2$ in $\pp\Omega \cup \omega$ and $R=-u_2$ satisfies \eqref{eq:R}.
Then there exist constants $C>0$ such that 
\begin{equation}
\label{porous:sei}
\|c_1-c_2\|_{H^2(\Omega)}
\leq
C\|u_1(\cdot,t_0)-u_2(\cdot,t_0)\|_{H^4(\Omega)}+CI, 
\end{equation}
where 
\begin{align*}
I&=\|\pp_t^3(u_1-u_2)\|_{L^2(Q_{\omega,\delta})}
+\|\pp_t^{\frac52}(u_1-u_2)\|_{L^2(Q_{\omega,\delta})}
+\|\pp_t^2(u_1-u_2)\|_{L^2(Q_{\omega,\delta})}
\\
&+\|\pp_t^{\frac32}(u_1-u_2)\|_{L^2(Q_{\omega,\delta})}
+\|\pp_t(u_1-u_2)\|_{L^2(Q_{\omega,\delta})}.
\end{align*}
\end{thm}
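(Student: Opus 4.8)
The plan is to deduce this statement as an immediate corollary of the inverse source problem with interior observation, Theorem \ref{thm:ispi}, exploiting the reduction already displayed just before Theorem \ref{thm:homo1}. First I would introduce the difference
\[
u=u_1-u_2,\qquad f=c_1-c_2,\qquad R=-u_2,
\]
and subtract the two systems \eqref{eq:1+1/2eq}--\eqref{eq:ini_condi} with the splitting \eqref{eq:LAck} for $k=1,2$. Since the principal part $A$ (including the first-order coefficients $b_j$) is common to both problems, the zeroth-order contributions recombine as $c_1u_1-c_2u_2=c_1u+(c_1-c_2)u_2=c_1u-fR$, so that $u$ solves the inverse source system \eqref{eq:eq01}--\eqref{eq:eq02}, with zeroth-order coefficient $c_1$ and source $f(x)R(x,t)$, subject to homogeneous boundary and initial data. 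This is precisely the reduced system written before Theorem \ref{thm:homo1}.

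Next I would verify that the hypotheses of Theorem \ref{thm:ispi} are all in force under the present assumptions. Because $\mathcal{U}$ is a linear space and $u_k,\pp_tu_k,\pp_t^2u_k\in\mathcal{U}$ for $k=1,2$, the difference satisfies $u,\pp_tu,\pp_t^2u\in\mathcal{U}$. From $c_1,c_2\in C^2(\overline{\Omega})$ we obtain $f\in H^2(\Omega)$, and the matching condition $c_1=c_2$ on $\pp\Omega\cup\omega$ yields exactly $f=0$ on $\pp\Omega$ and $f=0$ in $\omega$, which are the two support conditions demanded by Theorem \ref{thm:ispi}. Finally, $R=-u_2$ is assumed to satisfy \eqref{eq:R}; in particular $|R(x,t_0)|=|u_2(x,t_0)|>0$ on $\overline{\Omega}$, so the nondegeneracy of the source at $t=t_0$ holds. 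The standard structural assumptions on the operator (with zeroth-order coefficient $c_1\in C^2(\overline{\Omega})$ and the common $a_{ij},b_j$) are inherited from the hypotheses on $L$ stated in the introduction.

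With every hypothesis checked, I would apply Theorem \ref{thm:ispi} to the triple $(u,f,R)$, obtaining
\[
\|f\|_{H^2(\Omega)}\le C\|u(\cdot,t_0)\|_{H^4(\Omega)}+CI,
\]
where $I$ is the interior observation quantity assembled from the time-derivatives of $u$ over $Q_{\omega,\delta}$. Substituting back $f=c_1-c_2$ and $u=u_1-u_2$ reproduces \eqref{porous:sei} verbatim. I do not expect any genuine analytic obstacle here, since the entire weight of the argument has been front-loaded into the Carleman estimate underlying Theorem \ref{thm:ispi}; the proof is essentially bookkeeping. The only point demanding care is that the constructed coefficient $R=-u_2$ genuinely possesses the regularity and the positivity at $t=t_0$ required by \eqref{eq:R} — but this is imposed as an explicit hypothesis of the theorem rather than derived, so under that assumption the reduction is clean.
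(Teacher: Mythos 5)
Your proposal is correct and follows essentially the same route as the paper: the paper obtains Theorem \ref{thm:homo2} precisely by the subtraction $u=u_1-u_2$, $f=c_1-c_2$, $R=-u_2$ displayed before Theorem \ref{thm:homo1} and then invokes the interior-observation inverse source result, Theorem \ref{thm:ispi}. Your verification of the hypotheses (linearity of $\mathcal{U}$, $f\in H^2(\Omega)$ with $f=0$ on $\pp\Omega$ and in $\omega$, and \eqref{eq:R} holding for $R=-u_2$ by assumption) is exactly the bookkeeping the paper treats as a direct consequence.
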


\begin{rmk}
In the case of diffusion in porous media, the condition $|u_2(x,t_0)|=|R(x,t_0)|>0$ for $x\in\overline{\Omega}$ means that the concentration of the target particles is nonzero at the macroscopic scale.
\end{rmk}

\subsection{Determination of the diffusion coefficient}
\label{df}

We consider diffusion coefficients $a_k$ ($k=1,2$) and corresponding solutions $u_k$. Let us express $L$ as
\begin{equation}
Lu(x,t)=\cA_ku(x,t),
\label{eq:LAk}
\end{equation}
where $\cA_k$ is defined as
\[
\cA_k u(x,t)=\dd (a_k (x)\nabla u(x,t))
-\bb(x) \cdot\nabla u(x,t)
-c(x)u(x,t), \quad (x,t) \in Q,
\]
for $k=1,2$. We suppose that $a_k \in C^4(\overline{\Omega})$ $(k=1,2)$,  $\bb=(b_1,b_2,\ldots,b_n) \in \left\{ C^3(\overline{\Omega})\right\}^n$  and $c\in C^3(\overline{\Omega})$. Moreover we assume that there exists a constant $m>0$ such that $a_k(x)\geq m$, $x\in\Omega$ $(k=1,2)$. We investigate the inverse problems of determining the diffusion coefficients $a_k$  ($k=1,2$) by boundary observations and interior observations.

Set 
\[
u(x,t)=u_1(x,t)-u_2(x,t),\quad
a(x)=a_1(x)-a_2(x),\quad
r(x,t)=u_2(x,t)
\]
for $(x,t)\in Q$. Then by subtracting the equations for $k=2$ from ones for $k=1$, we obtain
\begin{equation}
\label{eq:dfeq01}
\left\{\begin{aligned}
&\left(\rho_1\pp_t+\rho_2\pp_t^{\frac12}-\cA_1 \right)u(x,t)=\dd (a(x) \nabla r(x,t)),
&(x,t)\in Q,
\\
&u(x,t)=0, 
&(x,t)\in \pp\Omega\times(0,T),
\\
&u(x,0)=0,
&x\in\Omega,
\end{aligned}\right.
\end{equation}
We assume that 
\begin{equation}
\label{eq:rr}
\left\{
\begin{aligned}
&r\in
C([0,T);C^3 (\overline{\Omega}))\cap
C((0,T);C^5 (\overline{\Omega}))\\
&\qquad
\cap\
C^2((0,T);C^4(\overline{\Omega}))\cap 
C^3((0,T);C^2(\overline{\Omega})), \\
&\pp_t^{\frac12} r \in 
C((0,T);C^3(\overline{\Omega})) \cap
C^2((0,T);C^2(\overline{\Omega})) .
\end{aligned}
\right.
\end{equation}

Let us introduce weight functions for the Carleman estimates introduced in \S\ref{carleman}. According to observation types we consider in this paper, 
we prepare two kinds of distance functions $d_1$ and $d_2$. We choose 
$d_1\in C^2(\overline{\Omega})$ such that
\begin{align*}
&d_1(x)>0, \ x\in\Omega, \quad
|\nabla d_1(x)|>\sigma_1, \ x\in\overline{\Omega},\\
&\sum_{i,j=1}^n a_{ij}(x)\pp_i d_1\nu_j \leq 0,\ x\in \pp\Omega\setminus \gamma,
\end{align*}
where $\sigma_1>0$ is a constant. Let $\omega_0$ be an arbitrarily fixed 
sub-domain of $\Omega$ such that $\omega_0\Subset\omega$. We take 
$d_2 \in C^2(\overline{\Omega})$ such that
\begin{equation*}
d_2(x)>0, \ x\in\Omega, \quad
|\nabla d_2(x)|>\sigma_2, \ x\in\overline{\Omega\setminus \omega_0},\quad
d_2(x)=0, \ x\in \pp\Omega,
\end{equation*}
where $\sigma_2>0$ is a constant. The existence of the distance functions 
$d_1$ and $d_2$ is proved in \cite{FIm, Im, Imanuvilov-Yamamoto98}. Then 
we introduce weight functions $\varphi_k,\psi_k$ ($k=1,2$) as
\begin{equation*}
\varphi_k(x,t)=\frac{e^{\la d_k(x)}}{\ell(t)}, \quad 
\psi_k(x,t)=\frac{e^{\la d_k(x)}-e^{2\la \|d_k\|_{C(\overline{\Omega})}}}{\ell(t)}, \quad (x,t) \in Q,
\end{equation*}
where $\ell(t)=t(T-t)$. Moreover we assume that there exists a constant $m_1>0$ such that 
\begin{equation}
\label{eq:r1}
|\nabla r (x,t_0) \cdot \nabla d_1(x)| \geq m_1, \quad x \in \overline{\Omega},
\end{equation}
or that  there exists a constant $m_2>0$ such that 
\begin{equation}
\label{eq:r2}
|\nabla r (x,t_0) \cdot \nabla d_2(x)| \geq m_2, \quad x \in \overline{\Omega\setminus\omega}.
\end{equation}

Let $D^\prime$ be an arbitrary sub-domain such that $\omega \Subset D^\prime \Subset \Omega$. 
Set $D=\Omega \setminus D^\prime$. 
Henceforth we suppose that $a\equiv 0$ in $D$. 

Now we are ready to state our main results. 

\begin{thm}[boundary observation]
\label{thm:df1}
Let $u_k,\pp_tu_k,\pp_t^2u_k,\nabla u_k\in\mathcal{U}$ ($k=1,2$) and $u_1,u_2$ satisfy \eqref{eq:1+1/2eq}--\eqref{eq:ini_condi} with \eqref{eq:LAk}. We suppose that $a_1,a_2\in C^4(\overline{\Omega})
$ with $a_1=a_2$ in $D$ and 
$r=u_2$ satisfies \eqref{eq:rr} and \eqref{eq:r1}.
Then there exist constants $C>0$ such that 
\begin{equation}
\label{df:seb}
\|a_1-a_2\|_{H^3(\Omega)}
\leq
C
\|u_1(\cdot,t_0)-u_2(\cdot,t_0)\|_{H^5(\Omega)}+CB, 
\end{equation}
where 
\begin{align*}
B&=\|\nabla\pp_t^3(u_1-u_2)\|_{L^2(\Sigma_\delta)}
+\|\nabla\pp_t^{\frac52}(u_1-u_2)\|_{L^2(\Sigma_\delta)}
+\|\nabla\pp_t^2(u_1-u_2)\|_{L^2(\Sigma_\delta)}
\\
&
+\|\nabla\pp_t^{\frac32}(u_1-u_2)\|_{L^2(\Sigma_\delta)}
+\|\nabla\pp_t(u_1-u_2)\|_{L^2(\Sigma_\delta)}.
\end{align*}
\end{thm}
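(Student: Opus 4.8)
The plan is to run the Bukhgeim--Klibanov scheme that underlies Theorem~\ref{thm:ispb}, but adapted to the gradient-type source $\dd(a\nabla r)$ and carried out with the parabolic Carleman estimate of \S\ref{carleman} for the weights $\varphi_1,\psi_1$ built from the distance function $d_1$. Since $a=a(x)$ is time-independent and $u$ solves \eqref{eq:dfeq01} with homogeneous lateral and initial data, I would first differentiate the equation in $t$. Writing the first- and half-order operator as the parabolic part $\rho_1\pp_t-\cA_1$ with the fractional term $\rho_2\pp_t^{\frac12}$ moved to the right-hand side, the functions $\pp_tu$ and $\pp_t^2u$ satisfy parabolic equations whose sources are $\dd(a\nabla\pp_t^k r)-\rho_2\pp_t^{k+\frac12}u$. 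The orders entering $B$ (up to $\pp_t^3$ and $\pp_t^{\frac52}$) indicate that one differentiates until $\pp_t^3u$ and its half-shift $\pp_t^{\frac52}u$ appear, so the Carleman estimate is ultimately applied to $\pp_t^2u$ over $Q_\delta=\Omega\times(t_0-\delta,t_0+\delta)$.

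Because the principal coefficient enters through $\nabla a$ rather than through $a$ itself, one extra spatial differentiation is needed compared with the zeroth-order case, and I would apply the Carleman estimate also to the spatial derivatives $\pp_{x_i}(\pp_t^k u)$. This is exactly why the hypothesis $\nabla u_k\in\mathcal{U}$ is imposed and why the coefficients are taken in $C^4$, $\bb\in\{C^3(\overline{\Omega})\}^n$, $c\in C^3$; these permit the additional differentiations and account for the fact that the estimate lands in $H^3$ rather than the $H^2$ of Theorem~\ref{thm:homo1}. Summing the Carleman estimate over $\pp_x^\alpha\pp_t^k u$ with weight $\psi_1$ bounds the interior weighted norms of these derivatives by the weighted norms of the sources $\dd(a\nabla\pp_t^k r)$ together with boundary traces on $\pp\Omega\times(t_0-\delta,t_0+\delta)$; the sign condition $\sum a_{ij}\pp_i d_1\,\nu_j\le0$ on $\pp\Omega\setminus\gamma$ makes the unobserved part of the boundary contribute with a favourable sign, so only the traces over $\Sigma_\delta$ survive and assemble into the quantity $B$.

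The heart of the argument is the Bukhgeim--Klibanov connection between the value at $t_0$ and the space--time integral. Evaluating \eqref{eq:dfeq01} at $t=t_0$ gives
\begin{equation*}
\dd(a\nabla r(\cdot,t_0))=\rho_1\pp_tu(\cdot,t_0)+\rho_2\pp_t^{\frac12}u(\cdot,t_0)-\cA_1u(\cdot,t_0),
\end{equation*}
so that $a$ is governed by the first-order operator $\nabla a\cdot\nabla r(\cdot,t_0)+a\,\dd\nabla r(\cdot,t_0)$ on the right. The non-degeneracy \eqref{eq:r1}, $|\nabla r(x,t_0)\cdot\nabla d_1(x)|\geq m_1$, keeps this transport direction transversal to the level sets of $d_1$, and together with $a\equiv0$ in $D$ it lets one integrate the directional estimate into a full weighted bound on $\|a\|_{H^3(\Omega)}$ (inverting the first-order relation at each spatial order, without a gain of derivatives). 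The time traces $\pp_tu(\cdot,t_0)$ and $\pp_t^{\frac12}u(\cdot,t_0)$ are not part of the spatial data, so I would control their weighted $\Omega$-norms at $t_0$ by space--time integrals over $Q_\delta$ through a trace-in-time inequality (the fundamental theorem of calculus in $t$, using that $e^{2s\psi_1}$ vanishes as $t\to0,T$), and then by the Carleman estimate of the previous step; the remaining spatial datum $\cA_1u(\cdot,t_0)$, being second order, is dominated by $\|u(\cdot,t_0)\|_{H^5(\Omega)}$ through the mapping $H^5\to H^3$.

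The main obstacle is the nonlocal half-order term $\rho_2\pp_t^{\frac12}$. Unlike the integer derivatives, $\pp_t^{\frac12}$ does not localize to $(t_0-\delta,t_0+\delta)$: its memory couples the behaviour on all of $(0,t)$, so the perturbations $\rho_2\pp_t^{k+\frac12}u$ cannot be treated as compactly supported data. I would estimate $\|\pp_t^{k+\frac12}u\|$ in the weighted $L^2$ norm via the Caputo integral representation and interpolation between the integer-order bounds furnished by $u,\pp_tu,\pp_t^2u\in\mathcal{U}$; this is where the half-integer observation terms $\pp_t^{\frac52}u$ and $\pp_t^{\frac32}u$ in $B$ originate, and for $s$ large these terms are absorbed into the dominant $s^3$-weighted left-hand side of the Carleman estimate. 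Finally, choosing $s$ large enough to absorb the fractional perturbations and the lower-order contributions, and then removing the two-sided bounded weight $e^{2s\psi_1(\cdot,t_0)}$ on $\overline{\Omega}$, yields \eqref{df:seb}.
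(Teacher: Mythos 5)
There is a genuine gap, and it sits at the heart of your plan: you keep the nonlocal term $\rho_2\pp_t^{\frac12}u$ on the right-hand side of the parabolic operator $\rho_1\pp_t-\cA_1$ and propose to absorb it, after Caputo-representation/interpolation bounds, into the $s$-weighted left-hand side of the Carleman estimate. This fails for two reasons. First, the absorption mechanism requires the perturbation to carry the \emph{same} Carleman weight $e^{2s\psi_{\delta,1}}$ with a lower power of $s\va_{\delta,1}$, uniformly in $s$; but $\pp_t^{\frac12}$ does not commute with multiplication by $e^{s\psi_{\delta,1}}$ in any controllable way, and the memory integral at $t\in(t_0-\delta,t_0+\delta)$ reaches back to $t=0$, i.e.\ outside $Q_\delta$, where the weight provides no control. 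An unweighted interpolation bound $\|\pp_t^{k+\frac12}u\|\le C\|u\|$ gives a constant that is not uniform in $s$ relative to the weighted norms, so the large-$s$ absorption argument breaks down. Second, the same nonlocality reappears fatally in your Bukhgeim--Klibanov step: evaluating your form of the equation at $t=t_0$ produces $\pp_t^{\frac12}u(\cdot,t_0)$, which depends on the entire history $(0,t_0)$ and cannot be controlled by the fundamental-theorem-of-calculus trick inside $Q_\delta$, nor by the data $u(\cdot,t_0)$ and $B$. The paper avoids both problems by Lemma \ref{lem:halftoone}: applying $\rho_2\pp_t^{\frac12}-(\rho_1\pp_t-\cA_1)$ to the equation eliminates the half-derivative of $u$ entirely, yielding the integer-order equation $\rho_2^2\pp_t u-(\rho_1\pp_t-\cA_1)^2u=\wF$ in which half-derivatives act only on the \emph{known} function $r=u_2$ (regular by \eqref{eq:rr}). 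The price is a fourth-order operator, handled by the two-step Carleman estimate of Theorem \ref{thm:ce0b} (with $p=1$ here); the half-integer terms in $B$ then arise not from interior interpolation, as you suggest, but from the boundary trace identity $\nabla w=-\rho_2\nabla\pp_t^{\frac12}u$ on $\Sigma$, applied to $y=\pp_tu$ and $z=\pp_t^2u$, which produces exactly $\nabla\pp_t^{\frac32}u$ and $\nabla\pp_t^{\frac52}u$ on $\Sigma_\delta$.

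A second, smaller but still substantive gap is your recovery of $\|a\|_{H^3(\Omega)}$. You invoke the first-order relation $\nabla a\cdot\nabla r(\cdot,t_0)+a\,\triangle r(\cdot,t_0)=\text{data}$ obtained from \eqref{eq:dfeq01} at $t_0$ and say one can ``invert the first-order relation at each spatial order,'' but this is not worked out, and with your untransformed equation the data would again contain the uncontrollable $\pp_t^{\frac12}u(\cdot,t_0)$. In the paper, the transformed source $\wF(x,t_0)$ in \eqref{eq:dfpr02} has leading term $a_1(x)\nabla r(x,t_0)\cdot\nabla\triangle a(x)$ --- third order in $a$ because $\cA_1$ has been applied to $\dd(a\nabla r)$ --- and the $H^3$ bound comes from the dedicated third-order Carleman estimate, Lemma \ref{lem:ce3rd1}, assembled from the transport estimate of Lemma \ref{lem:ce1st1} applied to $\triangle a$ together with the elliptic estimate of Lemma \ref{lem:celemeb}; the hypothesis $a\equiv0$ in $D$ supplies the boundary vanishing of $a,\nabla a,\triangle a,\nabla\triangle a$ that these lemmas require, a point your sketch does not address. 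Likewise, the $H^5(\Omega)$ norm of $u(\cdot,t_0)$ in \eqref{df:seb} comes from $\cA_1^2u(\cdot,t_0)$ and $\nabla\cA_1^2u(\cdot,t_0)$ (fifth-order spatial derivatives) in the expanded transformed equation \eqref{eq:dfpr03}--\eqref{eq:dfpr04}, not from a mapping $H^5\to H^3$ applied to the second-order quantity $\cA_1u(\cdot,t_0)$. Your correct instincts --- differentiating twice in $t$, using the sign condition on $d_1$ to confine boundary terms to $\Sigma_\delta$, exploiting \eqref{eq:r1} as a transversality condition, and the final removal of the two-sided bounded weight at fixed $s$ --- all match the paper, but without the composition step of Lemma \ref{lem:halftoone} the argument as proposed does not close.
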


\begin{thm}[interior observation]
\label{thm:df2}
Let $u_k,\pp_tu_k,\pp_t^2u_k,\nabla u_k\in\mathcal{U}$ ($k=1,2$) and $u_1,u_2$ satisfy \eqref{eq:1+1/2eq}--\eqref{eq:ini_condi} with \eqref{eq:LAk}. We suppose that $a_1,a_2\in C^4(\overline{\Omega})
$ with $a_1=a_2$ in $D\cup \omega$ and 
$r=u_2$ satisfies \eqref{eq:rr} and \eqref{eq:r2}.
Then there exist constants $C>0$ such that 
\begin{equation}
\label{df:sei}
\|a_1-a_2\|_{H^3(\Omega)}
\leq
C\|u_1(\cdot,t_0)-u_2(\cdot,t_0)\|_{H^5(\Omega)}+CI, 
\end{equation}
where 
\begin{align*}
I&=\|\pp_t^3(u_1-u_2)\|_{L^2(Q_{\omega,\delta})}
+\|\pp_t^{\frac52}(u_1-u_2)\|_{L^2(Q_{\omega,\delta})}
+\|\pp_t^2(u_1-u_2)\|_{L^2(Q_{\omega,\delta})}
\\
&+\|\pp_t^{\frac32}(u_1-u_2)\|_{L^2(Q_{\omega,\delta})}
+\|\pp_t(u_1-u_2)\|_{L^2(Q_{\omega,\delta})}.
\end{align*}
\end{thm}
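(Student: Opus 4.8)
The plan is to follow the Bukhgeim--Klibanov scheme already used for the interior inverse source problem (Theorem \ref{thm:ispi}), but carried out with the interior Carleman estimate of \S\ref{carleman} built on the weights $\varphi_2,\psi_2$ associated with $d_2$, and with an additional layer that reconstructs the coefficient $a$ from the gradient-type source $\dd(a\nabla r)=\nabla r\cdot\nabla a+a\,\Delta r$. The decisive structural difference from the source problem is that this source is first order in the unknown $a$, so $a$ cannot be read off pointwise; instead the non-degeneracy \eqref{eq:r2} will be exploited at $t=t_0$ to control the $\nabla d_2$-directional derivative $\nabla r(\cdot,t_0)\cdot\nabla a$, from which $a$ is recovered by a transport/elliptic argument using that $a\equiv0$ on $D\cup\omega$.

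First I would differentiate \eqref{eq:dfeq01} in $t$ up to second order. Writing $u^{(j)}=\pp_t^j u$, each $u^{(j)}$ solves a parabolic equation for $\rho_1\pp_t-\cA_1$ with source $\dd(a\nabla\pp_t^j r)$ together with the non-local term $\rho_2\pp_t^{j+\frac12}u$, which I transfer to the right-hand side; the regularity $u,\pp_tu,\pp_t^2u\in\mathcal{U}$ and \eqref{eq:rr} make all of these admissible. Applying the Carleman estimate of \S\ref{carleman} to $u^{(1)}$ and $u^{(2)}$ on $Q_\delta$ yields, on the left-hand side, weighted energies such as $\int_{Q_\delta}\bigl(s\varphi_2|\nabla u^{(j)}|^2+s^3\varphi_2^3|u^{(j)}|^2\bigr)e^{2s\psi_2}$ and, on the right, the source integrals together with an interior term supported in $\omega_0\Subset\omega$. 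The transferred non-local contributions are precisely the fractional quantities $\pp_t^{\frac32}u,\pp_t^{\frac52}u$ entering $I$, and they are absorbed using the interior observation on $Q_{\omega,\delta}$.

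The core step is to bound $\int_\Omega|\nabla r(\cdot,t_0)\cdot\nabla a|^2\,e^{2s\psi_2(\cdot,t_0)}\,dx$. Evaluating \eqref{eq:dfeq01} at $t=t_0$ expresses $\dd(a\nabla r(\cdot,t_0))$ through $\pp_tu(\cdot,t_0)$, $\pp_t^{\frac12}u(\cdot,t_0)$ and $\cA_1u(\cdot,t_0)$; the last is controlled by $\|u(\cdot,t_0)\|_{H^5(\Omega)}$, while the time-derivative values at $t_0$ are recovered from the Carleman left-hand side for $u^{(1)}$ by integrating $\pp_t\bigl(|u^{(1)}|^2e^{2s\psi_2}\bigr)$ over $(t_0-\delta,t_0+\delta)$. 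Taking $s$ large then absorbs the time-integrated source $\int_{Q_\delta}|\dd(a\nabla\pp_t^j r)|^2e^{2s\psi_2}$ against the value-at-$t_0$ term $\int_\Omega|\dd(a\nabla r(\cdot,t_0))|^2e^{2s\psi_2(\cdot,t_0)}$, which is what breaks the apparent circularity of $a$ appearing on both sides. Invoking \eqref{eq:r2}, namely $|\nabla r(\cdot,t_0)\cdot\nabla d_2|\ge m_2$ on $\overline{\Omega\setminus\omega}$, converts this into a weighted lower bound for the $\nabla d_2$-directional derivative of $a$; solving the resulting first-order transport relation $\nabla r(\cdot,t_0)\cdot\nabla a+a\,\Delta r(\cdot,t_0)=G$ with $G$ bounded in $H^2(\Omega)$, starting from the region $\omega$ where $a\equiv0$ and using $a\equiv0$ on $D$ near $\pp\Omega$, and then fixing $s$, recovers $\|a\|_{H^3(\Omega)}$ after a standard bootstrap.

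The main obstacle I expect is the interaction of the non-local half-order operator $\rho_2\pp_t^{\frac12}$ with both the time differentiation and the localization at $t_0$: unlike the classical parabolic case, differentiating \eqref{eq:dfeq01} produces the non-local terms $\pp_t^{\frac32}u,\pp_t^{\frac52}u$, whose values cannot be evaluated pointwise at $t_0$ and must instead be tracked through the fractional norms in $I$ and absorbed by the weight $e^{2s\psi_2}$. Intertwined with this is the need to reconstruct $a$ from only its $\nabla d_2$-directional derivative, which is exactly what forces the hypothesis \eqref{eq:r2} and the loss of one spatial derivative ($H^3$ against $H^5$) relative to Theorem \ref{thm:ispi}.
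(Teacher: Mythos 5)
Your outline reproduces the Bukhgeim--Klibanov skeleton of the paper's argument, but two of its load-bearing steps differ from the paper's proof in ways that break down. First, you propose to treat $\rho_2\pp_t^{j+\frac12}u$ as a right-hand-side source for the parabolic operator $\rho_1\pp_t-\cA_1$ and to absorb it "using the interior observation on $Q_{\omega,\delta}$". This cannot work: the transferred term produces $\int_{Q_\delta}|\pp_t^{j+\frac12}u|^2e^{2s\psi_2}\,dxdt$ over the \emph{whole} cylinder $Q_\delta$, a non-local quantity that is neither dominated by the left-hand side of the parabolic Carleman estimate (the half-order derivative involves the history of $u$ and interacts badly with the degenerate weight) nor by data supported only on $Q_{\omega,\delta}$. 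The paper avoids this entirely via Lemma \ref{lem:halftoone}: applying $\rho_2\pp_t^{\frac12}-(\rho_1\pp_t-\cA_1)$ and using $u(\cdot,0)=0$ eliminates the fractional derivative, yielding the integer-order operator $\rho_2^2\pp_t-(\rho_1\pp_t-\cA_1)^2$ to which Theorem \ref{thm:ce0i} applies; the fractional norms $\|\pp_t^{\frac32}u\|,\|\pp_t^{\frac52}u\|$ in $I$ then arise solely from the observation terms $|\pp_t^{\frac12}y|^2,|\pp_t^{\frac12}z|^2$ on $Q_\omega$ in that estimate, applied to $y=\pp_t u$, $z=\pp_t^2u$. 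For the same reason your evaluation of \eqref{eq:dfeq01} at $t=t_0$ is not admissible: $\pp_t^{\frac12}u(\cdot,t_0)$ is a non-local trace that the scheme cannot control; the paper evaluates the \emph{transformed} equation \eqref{eq:dfpr01} (and its gradient) at $t_0$, where no fractional term appears.

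Second, your reconstruction of $a$ from the first-order relation $\nabla r(\cdot,t_0)\cdot\nabla a+a\triangle r(\cdot,t_0)=G$ by integrating along characteristics from $\omega$ cannot deliver the claimed $H^3(\Omega)$ bound: solving a first-order transport equation gives $a$ essentially the Sobolev regularity of $G$ (which you yourself place in $H^2(\Omega)$), with a gain only in the flow direction, and no "standard bootstrap" is available since second derivatives of $a$ already sit inside $G$ through $\wF$. The paper instead keeps the third-order structure: at $t=t_0$ the leading term of \eqref{eq:dfpr02} is $a_1(x)\nabla r(x,t_0)\cdot\nabla\triangle a(x)$, and the hypothesis \eqref{eq:r2} makes $\bp=a_1\nabla r(\cdot,t_0)$ admissible for the third-order Carleman estimate of Lemma \ref{lem:ce3rd2} (built from the transport estimate Lemma \ref{lem:ce1st2} applied to $\wy=\triangle a$, combined with the elliptic estimate Lemma \ref{lem:celemei}), which controls all of $\pp_x^\alpha a$, $|\alpha|\le3$, at once. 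This is also why the paper must estimate both $\wF(\cdot,t_0)$ and $\nabla\wF(\cdot,t_0)$ — hence the six terms $\wJ_1$ through $\wJ_6$, the choice $p=1$ in Theorem \ref{thm:ce0i} giving the $|\nabla(\rho_1\pp_t-\cA_1)y|^2$ and $|\nabla\pp_t y|^2$ terms needed for $\wJ_4$--$\wJ_6$, and the $H^5(\Omega)$ norm of $u(\cdot,t_0)$ on the right of \eqref{df:sei} — none of which your sketch accounts for. Finally, a minor point: your absorption of the time-integrated source against the value-at-$t_0$ term is in the right spirit, but the paper implements it through the monotonicity $\va_{\delta,2}^2e^{2s\psi_{\delta,2}}\le\va_{\delta,2}^2(\cdot,t_0)e^{2s\psi_{\delta,2}(\cdot,t_0)}$ for large $s$, after which the $Q_\delta$ integral of the $a$-terms is absorbed into the left-hand side produced by Lemma \ref{lem:ce3rd2}.
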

\begin{rmk}
In one dimensional case in space, we may relax some assumptions on $u_k$ ($k=1,2$). 
It depends on the assumptions of the Carleman estimate for the third order partial differential equations 
(Lemma \ref{lem:ce3rd1} and Lemma \ref{lem:ce3rd2}). 
See also \cite{Ren-Xu14}. 
\end{rmk}
%
%

\section{Carleman estimate}
\label{carleman}

In this section, we establish the Carleman estimates for \eqref{eq:1+1/2eq}. 
We transform \eqref{eq:1+1/2eq} into an integer-order partial differential 
equation. The calculation is similar to \cite{Xu-Cheng-Yamamoto11}. Let us 
begin with the following Lemma.
\begin{lem}[Lemma 3.1 in \cite{Kwa}]
\label{lem:halftoone}
If $u \in C([0,T];H^4(\Omega))\cap  C^1((0,T);H^2(\Omega))\cap  C^2((0,T);L^2(\Omega))$ satisfies \eqref{eq:1+1/2eq} through \eqref{eq:ini_condi},
then $u$ satisfies
\begin{equation}
\label{eq:lem01}
\rho_2^2 \pp_t u(x,t)- (\rho_1\pp_t - L)^2 u(x,t)=G(x,t),\quad (x,t)\in Q
\end{equation}
where
\begin{equation}
\label{eq:lem02}
G(x,t)=\left[ \rho_2\pp_t^\frac12- (\rho_1\pp_t - L) \right]g(x,t) +\frac{\rho_2 g(x,0)}{\sqrt{\pi t}},\quad (x,t)\in Q.
\end{equation}
\end{lem}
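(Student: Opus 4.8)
The plan is to eliminate the half-order term by applying a suitable first-order-in-time operator, turning \eqref{eq:1+1/2eq} into the integer-order equation \eqref{eq:lem01}. Write $P:=\rho_1\pp_t-L$ and $D:=\pp_t^{\frac12}$, so that \eqref{eq:1+1/2eq} reads $Pu+\rho_2 Du=g$, i.e. $\rho_2 Du=g-Pu$. I would first record the two fractional-calculus identities on which everything rests. Using the Caputo representation $Du=I^{\frac12}\pp_t u$, where $I^{\frac12}$ is the Riemann--Liouville integral of order $\frac12$, together with the semigroup law $I^{\frac12}I^{\frac12}=I^1$ and the elementary value $I^{\frac12}[t^{-\frac12}]=\sqrt{\pi}$, a direct computation gives the composition rule $D^2u=\pp_t u$ (the two initial-value contributions cancel exactly). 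Differentiating $I^{\frac12}$ under the integral sign, and keeping the boundary contribution of the weakly singular kernel at $\tau=t$, yields the commutator
\begin{equation*}
D(\pp_t u)=\pp_t(Du)-\frac{\pp_t u(x,0)}{\sqrt{\pi t}},
\end{equation*}
while $D$ commutes with the spatial operator $L$ because the coefficients of $L$ are $t$-independent. The regularity $u\in C([0,T];H^4(\Omega))\cap C^1((0,T);H^2(\Omega))\cap C^2((0,T);L^2(\Omega))$ is precisely what is needed to justify these manipulations.

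With these identities in hand, I would apply $\rho_2 D$ to $\rho_2 Du=g-Pu$ and use $D^2u=\pp_t u$ to obtain $\rho_2^2\pp_t u=\rho_2 Dg-\rho_2 D(Pu)$. Next I commute $D$ past $P$: since $DL=LD$ and $D(\pp_t u)=\pp_t(Du)-\pp_t u(x,0)/\sqrt{\pi t}$, linearity gives
\begin{equation*}
\rho_2 D(Pu)=P(\rho_2 Du)-\frac{\rho_1\rho_2\,\pp_t u(x,0)}{\sqrt{\pi t}},
\end{equation*}
and substituting $\rho_2 Du=g-Pu$ turns $P(\rho_2 Du)$ into $Pg-P^2u$. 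Collecting terms yields
\begin{equation*}
\rho_2^2\pp_t u-P^2u=(\rho_2 D-P)g+\frac{\rho_1\rho_2\,\pp_t u(x,0)}{\sqrt{\pi t}}.
\end{equation*}

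It remains to identify the initial-value correction. Evaluating \eqref{eq:1+1/2eq} at $t=0$ and using $Du(x,0)=I^{\frac12}\pp_t u(x,0)=0$ together with the initial condition \eqref{eq:ini_condi} gives $\rho_1\pp_t u(x,0)=g(x,0)+Lh_2(x)$, which under the homogeneous initial data $h_2=0$ used throughout the inverse problems reduces to $\rho_1\pp_t u(x,0)=g(x,0)$. Substituting this into the correction term produces exactly $\rho_2 g(x,0)/\sqrt{\pi t}$, so the right-hand side equals $(\rho_2\pp_t^{\frac12}-(\rho_1\pp_t-L))g+\rho_2 g(x,0)/\sqrt{\pi t}=G$ as in \eqref{eq:lem02}, which completes the proof.

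I expect the main obstacle to be the careful bookkeeping of the fractional-calculus identities, and in particular the non-commutativity of $\pp_t^{\frac12}$ and $\pp_t$, which is the sole source of the singular term $1/\sqrt{\pi t}$. Establishing this commutator rigorously requires differentiating a weakly singular integral and controlling the boundary term at $\tau=t$, which is exactly where the regularity hypotheses on $u$ are consumed; once the composition rule $\pp_t^{\frac12}\pp_t^{\frac12}=\pp_t$ and the commutator are in place, the remainder is routine linear algebra of operators.
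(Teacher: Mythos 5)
Your proof is correct and takes essentially the same route as the paper, whose entire argument is the remark that the lemma follows by applying $\rho_2\pp_t^{\frac12}-(\rho_1\pp_t-L)$ to both sides of \eqref{eq:1+1/2eq} and using $u(x,0)=0$; applying $\rho_2\pp_t^{\frac12}$ and then substituting $\rho_2\pp_t^{\frac12}u=g-(\rho_1\pp_t-L)u$ is the same computation in a different order. You simply make explicit the ingredients the paper leaves implicit, namely the composition rule $\pp_t^{\frac12}\pp_t^{\frac12}u=\pp_t u$ under the stated regularity, the commutator $\pp_t^{\frac12}\pp_t u=\pp_t\pp_t^{\frac12}u-\pp_t u(x,0)/\sqrt{\pi t}$ that generates the singular term, and the identification $\rho_1\pp_t u(x,0)=g(x,0)$ from the homogeneous initial datum.
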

Although $\pp_t^{\frac12}\pp_t^{\frac12} \neq \pp_t$ in general, 
we may obtain the above lemma by applying $\rho_2 \pp_t^{\frac12}-(\rho_1 \pp_t -L)$ 
to the both hand side of \eqref{eq:1+1/2eq} and using $u(x,0)=0$, $x\in \Omega$.

Now we are ready to state our Carleman estimates.

\begin{thm}[Carleman estimate for \eqref{eq:1+1/2eq} with boundary data]
\label{thm:ce0b}
Let $p\geq 0$. 
Suppose that $g(x,t)=0$, $(x,t)\in \pp \Omega\times (0,T)$ and $\nabla g(x,t)=0$, $(x,t)\in \Sigma$. 
Then there exists $\la_0>0$ such that for any $\la>\la_0$, we can choose 
$s_0(\la)>0$ for which there exists $C=C(s_0,\la)>0$ such that
\begin{align}
\label{eq:pceb}
&
\int_Q
\Biggl[
(s\va_1)^{p-1}
\left(
|\pp_t^2 u|^2
+
\sum_{i,j=1}^n|\pp_t\pp_i \pp_j u|^2
\right)
+
(s\va_1)^{p+1}
|\nabla \pp_t u|^2  
\\
&\qquad
+
(s\va_1)^{p+2}
|\nabla (\rho_1 \pp_t -L)u|^2
+
(s\va_1)^{p+3}
\left( |\pp_t u|^2
+\sum_{i,j=1}^n|\pp_i \pp_j u|^2
\right)
\nonumber \\
&\qquad
+
(s\va_1)^{p+5}
 |\nabla u|^2
+
(s\va_1)^{p+7}
|u|^2
\Biggr]
e^{2s\psi_1}\,dxdt
\nonumber\\
&
\leq
C
\int_Q (s\va_1)^{p+1} \left|\left[\rho_2^2\pp_t - (\rho_1\pp_t - L)^2\right]u\right|^2 e^{2s\psi_1}\,dxdt 
\nonumber \\
&\quad
+C 
\int_{\Sigma}
\left[
(s\va_1)^{p+1}
|\nabla \pp_t u|^2 
+
(s\va_1)^{p+2}
|\nabla \pp_t^{\frac12} u|^2
+ 
(s\va_1)^{p+5}
|\nabla u|^2
\right]
e^{2s\psi_1}
\,dSdt ,
\nonumber
\end{align}
for all $s> s_0$ and all $u\in\mathcal{U}$ satisfying \eqref{eq:1+1/2eq} 
with $u(x,t)=0$, $(x,t)\in \pp\Omega \times (0,T)$ and $u(x,0)=0$, $x\in \Omega$. 
\end{thm}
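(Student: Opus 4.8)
The plan is to remove the half-order derivative by Lemma~\ref{lem:halftoone}, reducing matters to the integer-order equation \eqref{eq:lem01}, and then to deduce \eqref{eq:pceb} from the classical parabolic Carleman estimate for the second-order operator $\mathcal{P}:=\rho_1\pp_t-L$ applied to three auxiliary functions. Writing \eqref{eq:lem01} as $\mathcal{P}^2u=\rho_2^2\pp_t u-G$ and setting $v:=\mathcal{P}u$, I obtain the coupled system $\mathcal{P}u=v$, $\mathcal{P}v=\rho_2^2\pp_t u-G$; reading \eqref{eq:1+1/2eq} directly also gives the algebraic identity $v=g-\rho_2\pp_t^{\frac12}u$ in $Q$, which is the bridge to the half-order boundary term. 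Finally, since $L$ is time-independent, $w:=\pp_t u$ solves $\mathcal{P}w=\pp_t v$. The regularity $u\in\mathcal U$ (together with that of $\pp_t u,\pp_t^2u$ inherited from the equation, after a density argument if necessary) is enough to justify these manipulations and all integrations by parts.

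The core is to apply the parabolic Carleman estimate for $\mathcal P$, with the weights $\varphi_1,\psi_1$ built from $d_1$, three times, each multiplied by a different power of $s\varphi_1$: to $u$ with $(s\varphi_1)^{p+4}$, to $w=\pp_t u$ with $(s\varphi_1)^{p}$, and to $v$ with $(s\varphi_1)^{p+1}$. A bookkeeping of powers (using the standard structure $(s\varphi)^{-1}(|\pp_t\cdot|^2+|\nabla^2\cdot|^2)+(s\varphi)|\nabla\cdot|^2+(s\varphi)^3|\cdot|^2$ of the interior terms) shows that these produce exactly the six interior terms of \eqref{eq:pceb}: the $u$-estimate gives the $(s\varphi_1)^{p+3}(|\pp_t u|^2+\sum_{i,j}|\pp_i\pp_j u|^2)$, $(s\varphi_1)^{p+5}|\nabla u|^2$ and $(s\varphi_1)^{p+7}|u|^2$ terms; the $w$-estimate gives $(s\varphi_1)^{p-1}(|\pp_t^2u|^2+\sum_{i,j}|\pp_t\pp_i\pp_j u|^2)$ and $(s\varphi_1)^{p+1}|\nabla\pp_t u|^2$; and the $v$-estimate gives $(s\varphi_1)^{p+2}|\nabla v|^2=(s\varphi_1)^{p+2}|\nabla(\rho_1\pp_t-L)u|^2$. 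The three right-hand sides are then reorganized for $s$ large: the source $(s\varphi_1)^{p+4}|v|^2$ of the $u$-estimate and the source $(s\varphi_1)^{p}|\pp_t v|^2$ of the $w$-estimate are absorbed into the interior terms $(s\varphi_1)^{p+4}|v|^2$ and $(s\varphi_1)^{p}|\pp_t v|^2$ generated by the $v$-estimate (same powers, so one fixes the combination constants to close this), while in the source $(s\varphi_1)^{p+1}|\mathcal P v|^2=(s\varphi_1)^{p+1}|\rho_2^2\pp_t u-G|^2$ of the $v$-estimate the part $(s\varphi_1)^{p+1}|\pp_t u|^2$ is dominated by the interior term $(s\varphi_1)^{p+3}|\pp_t u|^2$. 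Because $\varphi_1$ is bounded below by a positive constant on $Q$, one has $s\varphi_1\ge1$ for $s$ large, so every strictly-higher-power absorption is legitimate; what remains is precisely $C(s\varphi_1)^{p+1}|[\rho_2^2\pp_t-(\rho_1\pp_t-L)^2]u|^2$, the claimed right-hand side.

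The step I expect to be the main obstacle is the treatment of the boundary contributions. For the $u$- and $w=\pp_t u$-estimates the homogeneous conditions $u=0$, hence $\pp_t u=0$, on $\pp\Omega\times(0,T)$ collapse the boundary integrals to the normal-derivative terms, yielding the $(s\varphi_1)^{p+5}|\nabla u|^2$ and $(s\varphi_1)^{p+1}|\nabla\pp_t u|^2$ terms on $\Sigma$, while the contributions over $\pp\Omega\setminus\gamma$ are discarded using $\sum_{i,j}a_{ij}\pp_i d_1\,\nu_j\le0$ there. The genuinely new feature is the $v$-estimate, where $v$ does not satisfy a homogeneous Dirichlet condition: here I would use $v=g-\rho_2\pp_t^{\frac12}u$ together with the hypotheses $g=0$ on $\pp\Omega\times(0,T)$ and $\nabla g=0$ on $\Sigma$ to write $\nabla v=-\rho_2\nabla\pp_t^{\frac12}u$ on $\Sigma$, converting the boundary contribution of the $v$-estimate into the half-order term $(s\varphi_1)^{p+2}|\nabla\pp_t^{\frac12}u|^2$ of \eqref{eq:pceb}. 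The care required is to arrange the parabolic estimate so that the boundary contribution of this non-vanishing $v$ is controlled by $|\nabla v|^2$ on $\Sigma$ alone, with any lower-order Dirichlet-trace contribution either carrying the favorable sign furnished by $d_1$ on $\pp\Omega\setminus\gamma$ or being absorbable; ensuring that no uncontrolled boundary trace survives is, rather than the interior bookkeeping, where the argument must be executed most carefully.
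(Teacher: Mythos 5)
Your proposal is correct and is essentially the paper's proof: both reduce to \eqref{eq:lem01} via Lemma~\ref{lem:halftoone} and then apply the parabolic Carleman estimate of Lemma~\ref{lem:celempb} three times --- to $u$, to $\pp_t u$, and to $v=w=(\rho_1\pp_t-L)u$ --- closing by absorption and converting the boundary trace of $w$ via $\nabla w=-\rho_2\nabla\pp_t^{\frac12}u$ on $\Sigma$. The only difference is bookkeeping: the paper chains the estimates sequentially with free exponents $p_1$, $p_2=p_1+1$, $p_2=p_1+3$, then sets $p_1=p+1$ and uses the $(s\va_1)^{p_1-1}|\pp_t w|^2$ term of \eqref{eq:ce05} as the source for the $\pp_t u$-estimate, whereas you sum three fixed-weight applications ($p+4$, $p$, $p+1$) and absorb the same-power sources $(s\va_1)^{p+4}|v|^2$ and $(s\va_1)^{p}|\pp_t v|^2$ by choosing the combination constants; the two routes are arithmetically equivalent and your power count matches \eqref{eq:pceb} exactly.

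One correction to your assessment of the ``main obstacle'': your premise that $v$ ``does not satisfy a homogeneous Dirichlet condition'' is wrong, and the extra care you anticipate is unnecessary. Since $u=0$ on $\pp\Omega\times(0,T)$ and $u(\cdot,0)=0$, the Caputo half-derivative satisfies $\pp_t^{\frac12}u=0$ on the lateral boundary, so by \eqref{eq:1+1/2eq} and the hypothesis $g=0$ on $\pp\Omega\times(0,T)$ one has $v=g-\rho_2\pp_t^{\frac12}u=0$ on $\pp\Omega\times(0,T)$ --- this is precisely the observation the paper makes, and it is what licenses applying Lemma~\ref{lem:celempb} (which \emph{requires} a vanishing Dirichlet trace) to $v$ at all. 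Had the trace genuinely been nonzero, your fallback plan of controlling it ``by $|\nabla v|^2$ on $\Sigma$ alone'' would not go through with the stated lemma, since Lemma~\ref{lem:celempb} provides no mechanism for a nonvanishing Dirichlet trace. As it stands, the complication is vacuous, the hypothesis $\nabla g=0$ on $\Sigma$ is used only in the final conversion $\nabla v=-\rho_2\nabla\pp_t^{\frac12}u$ on $\Sigma$, and your argument closes exactly as the paper's does.
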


\begin{thm}[Carleman estimate for \eqref{eq:1+1/2eq} with interior data]
\label{thm:ce0i}
Let $p\geq 0$. 
Suppose that $g(x,t)$$=0$, $(x,t)\in \pp \Omega\times (0,T)$ and $ g(x,t)=0$, $(x,t)\in Q_\omega$.
Then there exists $\la_0>0$ such that for any $\la>\la_0$, we can choose 
$s_0(\la)>0$ for which there exists $C=C(s_0,\la)>0$ such that
\begin{align}
\label{eq:pcei}
&
\int_Q
\Biggl[
(s\va_2)^{p-1}
\left(
|\pp_t^2 u|^2
+
\sum_{i,j=1}^n|\pp_t\pp_i \pp_j u|^2
\right)
+
(s\va_2)^{p+1}
|\nabla \pp_t u|^2  
\\
&\qquad
+
(s\va_2)^{p+2}
|\nabla (\rho_1 \pp_t -L)u|^2
+
(s\va_2)^{p+3}
\left( |\pp_t u|^2
+\sum_{i,j=1}^n|\pp_i \pp_j u|^2
\right)
\nonumber \\
&\qquad
+
(s\va_2)^{p+5}
 |\nabla u|^2
+
(s\va_2)^{p+7}
|u|^2
\Biggr]
e^{2s\psi_2}\,dxdt
\nonumber\\
&
\leq
C
\int_Q (s\va_2)^{p+1} \left|\left[\rho_2^2\pp_t - (\rho_1\pp_t - L)^2\right]u\right|^2 e^{2s\psi_2}\,dxdt 
\nonumber \\
&\quad
+C 
\int_{Q_\omega}
\left[
(s\va_2)^{p+3}
| \pp_t u|^2
+ 
(s\va_2)^{p+4}
|\pp_t^{\frac12} u|^2 
+
(s\va_2)^{p+7}
|u|^2
\right]
e^{2s\psi_2}
\,dxdt,
\nonumber
\end{align}
for all $s> s_0$ and all $u\in\mathcal{U}$ satisfying \eqref{eq:1+1/2eq} 
with $u(x,t)=0$, $(x,t)\in \pp\Omega \times (0,T)$ and $u(x,0)=0$, $x\in \Omega$. 
\end{thm}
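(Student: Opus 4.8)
The plan is to remove the half-order derivative by Lemma \ref{lem:halftoone}, decouple the resulting fourth-order operator into two second-order parabolic pieces, and then apply the classical interior parabolic Carleman estimate of \cite{Imanuvilov-Yamamoto98} term by term with matched powers of $s\va_2$. First I would use Lemma \ref{lem:halftoone} to replace \eqref{eq:1+1/2eq} by \eqref{eq:lem01}, so that the quantity $G:=\left[\rho_2^2\pp_t-(\rho_1\pp_t-L)^2\right]u$ appearing on the right of \eqref{eq:pcei} is exactly the source in \eqref{eq:lem01}. Setting $v:=(\rho_1\pp_t-L)u$ and using that $\pp_t$ commutes with $L$ (the coefficients of $L$ are $t$-independent), equation \eqref{eq:lem01} becomes the coupled system $(\rho_1\pp_t-L)u=v$ and $(\rho_1\pp_t-L)v=(\rho_1\pp_t-L)^2u=\rho_2^2\pp_t u-G$, both of which are genuine second-order parabolic equations for the weight $\va_2,\psi_2$ built from $d_2$.

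Next I would invoke the interior parabolic Carleman estimate with a free power $(s\va_2)^{q}$: for $w$ with $(\rho_1\pp_t-L)w=F$,
\begin{equation*}
\begin{aligned}
&\int_Q\left[(s\va_2)^{q-1}\Bigl(|\pp_t w|^2+\textstyle\sum_{i,j=1}^n|\pp_i\pp_j w|^2\Bigr)+(s\va_2)^{q+1}|\nabla w|^2+(s\va_2)^{q+3}|w|^2\right]e^{2s\psi_2}\,dxdt\\
&\qquad\le C\int_Q(s\va_2)^{q}|F|^2e^{2s\psi_2}\,dxdt+C\int_{Q_\omega}(s\va_2)^{q+3}|w|^2e^{2s\psi_2}\,dxdt .
\end{aligned}
\end{equation*}
I would apply this three times: to $u$ with $q=p+4$ and $F=v$; to $\pp_t u$ with $q=p$ and $F=\pp_t v$; and to $v$ with $q=p+1$ and $F=\rho_2^2\pp_t u-G$. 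Matching the exponents, the first application produces the $(s\va_2)^{p+3}$, $(s\va_2)^{p+5}$ and $(s\va_2)^{p+7}$ terms in $u$; the second the $(s\va_2)^{p-1}$ and $(s\va_2)^{p+1}$ terms in $\pp_t u$; and the third the term $(s\va_2)^{p+2}|\nabla(\rho_1\pp_t-L)u|^2$. Summing them, after multiplying the $v$-estimate by a large constant $M$, recovers the entire left-hand side of \eqref{eq:pcei}.

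It then remains to organize the right-hand sides. The cross terms $\int_Q(s\va_2)^{p+4}|v|^2$ and $\int_Q(s\va_2)^{p}|\pp_t v|^2$ coming from the $u$- and $\pp_t u$-estimates are absorbed by the matching left-hand terms of the $M$-scaled $v$-estimate once $M$ is large. In $\int_Q(s\va_2)^{p+1}|\rho_2^2\pp_t u-G|^2$ the $|\pp_t u|^2$ part carries the strictly lower power $p+1<p+3$ and is absorbed by the left-hand $(s\va_2)^{p+3}|\pp_t u|^2$ term after enlarging $s_0$, while the $|G|^2$ part is precisely the desired source integral. For the interior observation produced by the $v$-estimate I would use the hypothesis $g=0$ on $Q_\omega$: there \eqref{eq:1+1/2eq} gives $v=(\rho_1\pp_t-L)u=-\rho_2\pp_t^{\frac12}u$, so $\int_{Q_\omega}(s\va_2)^{p+4}|v|^2=\rho_2^2\int_{Q_\omega}(s\va_2)^{p+4}|\pp_t^{\frac12}u|^2$, which is exactly the $\pp_t^{\frac12}u$ term of \eqref{eq:pcei}. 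This substitution is the single point where the fractional structure of the original equation is used.

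The main obstacle is the boundary bookkeeping of the $v$-estimate. Since $u=\pp_t u=0$ on $\pp\Omega\times(0,T)$, their estimates carry no boundary term, but $v=(\rho_1\pp_t-L)u$ does not vanish on $\pp\Omega$, so applying the parabolic estimate to $v$ a priori generates boundary integrals. These must be rendered harmless by the geometry $d_2=0$ on $\pp\Omega$ with $d_2>0$ inside, which makes $e^{2s\psi_2}$ maximal strictly inside $\Omega$ and forces $\pp_\nu d_2\le 0$ on $\pp\Omega$: the principal boundary term then has the favourable sign allowing it to be discarded, while the lower-order boundary contributions carry the strongly decaying factor of $e^{2s\psi_2}$ on $\pp\Omega$ and are absorbed after enlarging $s_0$. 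Verifying that all of these boundary terms indeed drop, together with carefully tracking the powers of $s\va_2$ through the three applications and the absorptions, is the technical heart of the argument.
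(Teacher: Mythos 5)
Your overall architecture is essentially the paper's: reduce by Lemma \ref{lem:halftoone} to \eqref{eq:lem01}, factor through $v=(\rho_1\pp_t-L)u$ (the paper's $w$ in \eqref{eq:ce02}), apply the interior parabolic Carleman estimate of Lemma \ref{lem:celempi} to $u$, to $\pp_t u$ and to $v$ with shifted powers, absorb the cross terms by taking $s$ (and your multiplier $M$) large, and convert the interior observation of $v$ via $v=-\rho_2\pp_t^{\frac12}u$ on $Q_\omega$, using $g=0$ there. Your power bookkeeping ($q=p+4$ for $u$, $q=p$ for $\pp_t u$, $q=p+1$ for $v$) is consistent and reproduces both sides of \eqref{eq:pcei}; the paper organizes the same three applications sequentially (with exponents $p_1$, $p_2=p_1+1$, $p_2=p_1+3$, then $p_1=p+1$ and $u_0=\pp_t u$) rather than summing with a large constant, but that difference is cosmetic.

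However, your treatment of the boundary trace of $v$ contains a genuine error, and the workaround you sketch would fail. You assert that $v$ does not vanish on $\pp\Omega\times(0,T)$ and propose to discard the resulting boundary integrals because $d_2=0$ on $\pp\Omega$ makes $e^{2s\psi_2}$ decay there and $\pp_\nu d_2\le 0$ gives a favorable sign. But Lemma \ref{lem:celempi} is stated only for functions with zero boundary trace; for a nonzero trace one would need a Carleman inequality carrying extra boundary terms (traces of $v$, $\pp_\nu v$, tangential and time derivatives) that have no uniform sign and cannot be absorbed into interior left-hand terms: the final inequality must hold for each fixed $s>s_0$, and at fixed $s$ the factor $e^{2s\psi_2}|_{\pp\Omega}$ is just a positive constant, so those terms would survive as boundary data that simply do not appear on the right-hand side of \eqref{eq:pcei}. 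The missing observation --- which is exactly why the hypothesis $g=0$ on $\pp\Omega\times(0,T)$ is in the statement --- is that the equation \eqref{eq:1+1/2eq} itself gives $v=g-\rho_2\pp_t^{\frac12}u$ pointwise, and on $\pp\Omega\times(0,T)$ both terms vanish: $g=0$ by hypothesis, and $\pp_t^{\frac12}u=0$ because $u\equiv 0$ on the boundary for all $t$. Hence $v=0$ on $\pp\Omega\times(0,T)$ and Lemma \ref{lem:celempi} applies directly, with no boundary terms at all; this is precisely the step the paper records immediately after \eqref{eq:ce02}. You in fact invoke the same identity $v=-\rho_2\pp_t^{\frac12}u$ inside $Q_\omega$, so the repair is one line --- but as written, this step of your argument does not go through.
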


To prove Theorems \ref{thm:ce0b} and \ref{thm:ce0i}, we start with 
the global Carleman estimates for parabolic equations (see e.g., 
\cite{Im, Yamamoto09}) stated in Lemmas \ref{lem:celempb} and 
\ref{lem:celempi} below.

\begin{lem}
\label{lem:celempb}
Let $p\geq 0$. 
There exists $\la_0>0$ such that for any $\la>\la_0$, we can choose 
$s_0(\la)>0$ for which there exists $C=C(s_0,\la)>0$ such that
\begin{align*}
&
\int_Q
\left[
(s\va_1)^{p-1}
\left(
|\pp_t v|^2
+
\sum_{i,j=1}^n|\pp_i \pp_j v|^2
\right)
+
(s\va_1)^{p+1}
|\nabla v|^2
+
(s\va_1)^{p+3}
|v|^2
\right]
\!
e^{2s\psi_1}\,dxdt \\
&
\leq
C\int_Q
(s\va_1)^{p}
|(\rho_1\pp_t -L) v|^2 e^{2s\psi_1}\,dxdt 
+ 
C
\int_{\Sigma}
(s\va_1)^{p+1}|\nabla v|^2 
e^{2s\psi_1}
\,dSdt,
\end{align*}
for all $s> s_0$ and all $v \in L^2(0,T;H^2(\Omega))\cap  H^1(0,T;L^2(\Omega))$ satisfying $v (x,t)=0$, $(x,t)\in \pp\Omega\times(0,T)$.
\end{lem}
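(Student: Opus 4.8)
The estimate is the standard global parabolic Carleman inequality for $\rho_1\pp_t-L$ with the Fursikov--Imanuvilov weight $\psi_1$, carried to a general power $p$; the plan is to follow the classical conjugation-and-integration-by-parts scheme of \cite{Im, Yamamoto09} while tracking the extra factor $(s\va_1)^p$. First I would reduce to the principal part: writing $Lv=L_0v-\sum_jb_j\pp_jv-cv$ with $L_0v=\sum_{i,j}\pp_i(a_{ij}\pp_jv)$, the drift and potential contribute at most $C\int_Q(s\va_1)^p(|\nabla v|^2+|v|^2)e^{2s\psi_1}\,dxdt$, which for $s$ large is dominated by the $(s\va_1)^{p+1}|\nabla v|^2$ and $(s\va_1)^{p+3}|v|^2$ terms on the left-hand side; hence it suffices to treat $\rho_1\pp_t-L_0$.

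Next I would conjugate, setting $w=(s\va_1)^{p/2}e^{s\psi_1}v$ and $P_sw=(s\va_1)^{p/2}e^{s\psi_1}(\rho_1\pp_t-L_0)\bigl((s\va_1)^{-p/2}e^{-s\psi_1}w\bigr)$. Because $\nabla\va_1=\la(\nabla d_1)\va_1$ and $\pp_t\va_1=-(\ell'/\ell)\va_1$, the first-order coefficients produced by $e^{s\psi_1}$ scale like $s\la\va_1$, whereas those produced by the polynomial factor $(s\va_1)^{p/2}$ carry no power of $s$; I would therefore write $P_s=\mathcal S+\mathcal A+\mathcal R$, with $\mathcal S$ formally self-adjoint, $\mathcal A$ formally skew-adjoint (both governed by $\psi_1$), and $\mathcal R$ collecting the polynomial-weight commutators, which are of lower order in $s$ and are absorbed at the end. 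Expanding $\|P_sw\|_{L^2(Q)}^2=\|\mathcal Sw\|^2+\|\mathcal Aw\|^2+2(\mathcal Sw,\mathcal Aw)_{L^2(Q)}$ together with the $\mathcal R$-contributions, the matter reduces to the cross term $2(\mathcal Sw,\mathcal Aw)$.

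The main work, and the principal obstacle, is the integration by parts of $2(\mathcal Sw,\mathcal Aw)$. Carried out carefully it yields a bulk quadratic form that, using the nondegeneracy $|\nabla d_1|>\sigma_1$ on $\overline{\Omega}$ and after first fixing $\la$ large (so that the definite terms, which carry a factor $\la^2$, dominate the sign-indefinite ones, which carry only $\la$) and then $s$ large, bounds $\int_Q\bigl(s\la^2\va_1|\nabla w|^2+s^3\la^4\va_1^3|w|^2\bigr)\,dxdt$ from below; rewriting in terms of $v$ reproduces the $(s\va_1)^{p+1}|\nabla v|^2$ and $(s\va_1)^{p+3}|v|^2$ terms. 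The delicate points are the exhaustive bookkeeping of every term generated by the integration by parts, the correct order of the limits (first $\la$, then $s$), and the fact that the temporal derivatives of the weight carry an extra power of $\va_1$ that must be controlled by the high powers of $s$ in the leading terms.

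Finally I would treat the boundary and the highest-order terms. Since $v=0$ on $\pp\Omega\times(0,T)$, tangential derivatives vanish there, so $\nabla v=(\pp_\nu v)\nu$ and each boundary contribution reduces to a multiple of $\bigl(\sum_{i,j}a_{ij}\pp_id_1\nu_j\bigr)(s\va_1)^{p+1}|\pp_\nu v|^2e^{2s\psi_1}$; the hypothesis $\sum_{i,j}a_{ij}\pp_id_1\nu_j\le0$ on $\pp\Omega\setminus\gamma$ gives these the favorable sign off $\gamma$, so they are discarded, leaving only $\int_\Sigma(s\va_1)^{p+1}|\nabla v|^2e^{2s\psi_1}\,dSdt$ on the right. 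To recover the $(s\va_1)^{p-1}\bigl(|\pp_tv|^2+\sum_{i,j}|\pp_i\pp_jv|^2\bigr)$ terms I would retain $\|\mathcal Sw\|^2$ and $\|\mathcal Aw\|^2$ weighted by $(s\va_1)^{-1}$, whose leading parts are $L_0w$ and $\rho_1\pp_tw$: the loss of one power of $s\va_1$ is precisely what lets the accompanying first-order remainders be absorbed into the gradient term already controlled, while uniform ellipticity turns $L_0v$ into $\sum_{i,j}|\pp_i\pp_jv|^2$, which accounts for the weight $(s\va_1)^{p-1}$. Collecting all contributions and absorbing $\mathcal R$ together with the lower-order $b,c$ terms for $s$ large yields the asserted inequality.
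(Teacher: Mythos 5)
The paper does not prove this lemma at all: it is quoted as a known global Carleman estimate for parabolic operators, with a pointer to \cite{Im, Yamamoto09}. Your sketch correctly reproduces the standard Fursikov--Imanuvilov argument from those references---reduction to the principal part, conjugation, symmetric/skew decomposition with the cross term made positive via $|\nabla d_1|>\sigma_1$ (fixing $\la$ large before $s$), the sign condition $\sum_{i,j}a_{ij}\pp_i d_1\nu_j\le 0$ on $\pp\Omega\setminus\gamma$ to discard boundary terms away from $\gamma$, and recovery of the $(s\va_1)^{p-1}$-weighted terms at the cost of one power of $s\va_1$---so it is consistent with the proof the paper relies on.
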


\begin{lem}
\label{lem:celempi}
Let $p\geq 0$. There exists $\la_0>0$ such that for any $\la>\la_0$, we can 
choose $s_0(\la)>0$ for which there exists $C=C(s_0,\la)>0$ such that
\begin{align*}
&
\int_Q
\left[
(s\va_2)^{p-1}
\left(
|\pp_t v|^2
+
\sum_{i,j=1}^n|\pp_i \pp_j v|^2
\right)
+
(s\va_2)^{p+1}
|\nabla v|^2
+
(s\va_2)^{p+3}
|v|^2
\right]
\!
e^{2s\psi_2}\,dxdt \\
&
\leq
C\int_Q
(s\va_2)^{p}
|(\rho_1\pp_t -L) v|^2 e^{2s\psi_2}\,dxdt 
+ 
C
\int_{Q_\omega}
(s\va_2)^{p+3}|v|^2 
e^{2s\psi_2}
\,dxdt,
\end{align*}
for all $s> s_0$ and all $v \in L^2(0,T;H^2(\Omega))\cap  H^1(0,T;L^2(\Omega))$ satisfying $v (x,t)=0$, $(x,t)\in \pp\Omega\times(0,T)$.
\end{lem}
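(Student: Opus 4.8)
The estimate is the classical global parabolic Carleman inequality of Fursikov--Imanuvilov type, and I would follow the weighted-energy scheme of \cite{Im}. Since the first- and zeroth-order coefficients of $L$ are bounded, their contributions are of lower order and can be absorbed into the left-hand side once the leading estimate is in hand; I would therefore first treat the principal part $\rho_1\pp_t-\sum_{i,j}\pp_i(a_{ij}\pp_j\,\cdot\,)$ and reinstate the $b_j,c$ terms at the end by taking $s$ large. The plan is to conjugate by the Carleman weight: setting $w=e^{s\psi_2}v$ and $Pw:=e^{s\psi_2}(\rho_1\pp_t-L)(e^{-s\psi_2}w)$, I would split $P=P_++P_-$ into its formally self-adjoint part $P_+$ and its skew-adjoint part $P_-$ in $L^2(Q)$.

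Next I would expand $\|Pw\|_{L^2(Q)}^2=\|P_+w\|^2+\|P_-w\|^2+2(P_+w,P_-w)_{L^2(Q)}$ and integrate the cross term by parts. The structure of $\psi_2=(e^{\la d_2}-e^{2\la\|d_2\|_{C(\overline\Omega)}})/\ell(t)$ makes the dominant volume contributions positive: since $\nabla\psi_2=\la\va_2\nabla d_2$, squaring produces the weight $\la^2\va_2^2|\nabla d_2|^2$, which by the condition $|\nabla d_2|>\sigma_2$ on $\overline{\Omega\setminus\omega_0}$ is bounded below there. Taking $\la$ large first absorbs the harmless remainders coming from derivatives of $a_{ij}$ and of $d_2$, and taking $s$ large then recovers, outside $\omega_0$, the full left-hand side with the advertised hierarchy of powers $(s\va_2)^{p-1}$, $(s\va_2)^{p+1}$, $(s\va_2)^{p+3}$. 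The parameter $p$ is accommodated by carrying the extra polynomial weight $(s\va_2)^{p/2}$ through the conjugation, which merely shifts every power by $p$ and generates further lower-order commutators that are again absorbed for $s$ large.

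The two families of boundary terms produced by the integration by parts must then be disposed of. The temporal terms at $t=0,T$ vanish because every surviving integrand carries a positive power of $\ell(t)^{-1}$ against $e^{2s\psi_2}$, and $e^{2s\psi_2}(s\va_2)^k\to0$ as $t\to0^+,T^-$. For the spatial terms on $\Sigma$ I would use that $v=0$ on $\pp\Omega\times(0,T)$, so that $\nabla v=(\pp_\nu v)\nu$ there and only a single term survives; because $d_2=0$ on $\pp\Omega$ while $d_2>0$ inside forces $\pp_\nu d_2\le0$ on $\pp\Omega$, this term carries the favorable sign and may be dropped. This is precisely why no boundary observation appears on the right-hand side of the interior estimate.

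Finally I would localize near the critical set of $d_2$, which is the step I expect to be the main obstacle. The lower bound on $|\nabla d_2|$ fails only on $\omega_0$, so I would introduce a cutoff $\chi\in C_0^\infty(\omega)$ with $\chi\equiv1$ on $\omega_0$, split the volume integral using $\chi$, and move the resulting $\int_Q\chi^2(s\va_2)^{p+3}|w|^2\,dxdt$ contribution to the right-hand side; transcribing back to $v$ and using $\omega_0\Subset\omega$ yields the interior term $\int_{Q_\omega}(s\va_2)^{p+3}|v|^2e^{2s\psi_2}\,dxdt$. The delicate point is the bookkeeping here: one must verify that the cutoff commutators and the remainders supported on $\omega\setminus\omega_0$ are genuinely dominated by the positive terms already gained, so that no uncontrolled interior term of top order survives. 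Getting this domination requires the precise exponents above and the order of limits $\la\to\infty$ before $s\to\infty$, and it is the crux of the Fursikov--Imanuvilov argument for the interior case.
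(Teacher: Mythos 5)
Note first that the paper does not prove this lemma at all: Lemmas \ref{lem:celempb} and \ref{lem:celempi} are imported as known global Carleman estimates for parabolic operators, with a pointer to \cite{Im, Yamamoto09}, and the paper's own work starts only afterwards (deriving Theorems \ref{thm:ce0b} and \ref{thm:ce0i} from these lemmas). So there is no in-paper argument to compare against; what you have written is a reconstruction of the Fursikov--Imanuvilov proof contained in the cited references, and as a reconstruction it is essentially sound. Your outline has the right skeleton: conjugation $w=e^{s\psi_2}v$ and the decomposition $\|Pw\|^2=\|P_+w\|^2+\|P_-w\|^2+2(P_+w,P_-w)$ (retaining $\|P_\pm w\|^2$ is indeed what later yields the $(s\va_2)^{p-1}$-weighted terms $|\pp_tv|^2$ and $\sum_{i,j}|\pp_i\pp_jv|^2$, a point you state only implicitly); the order of limits ($\la$ large before $s$ large); the vanishing of the temporal boundary terms because $\psi_2\to-\infty$ as $t\to0^+,T^-$; the favorable sign of the spatial boundary term from $d_2=0$ on $\pp\Omega$, $d_2>0$ inside, hence $\pp_\nu d_2\le0$ --- which is exactly why the choice of $d_2$ (as opposed to $d_1$) produces an interior rather than boundary observation; and absorption of the $b_j$, $c$ terms by largeness of $s$, using that they enter with weight $(s\va_2)^{p}$ against $(s\va_2)^{p+1}$ and $(s\va_2)^{p+3}$ on the left. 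The one step you flag but do not execute is in fact the only genuinely delicate one: after cutting off near $\omega_0$, the localized gradient term $\int_{Q_{\omega_0}}s\la^2\va_2|\nabla w|^2$ cannot simply be ``moved to the right-hand side''; it must be converted into the zeroth-order observation term by integrating by parts against $\chi^2 s\la^2\va_2\,w$ and applying Cauchy--Schwarz, with the second-derivative contribution absorbed by the $(s\va_2)^{p-1}$-weighted terms already secured on the left --- this is where the exponent $p+3$ in the observation term is forced. Since you correctly identify this as the crux and your exponent bookkeeping is consistent with it, the proposal is a faithful (if schematic) account of the standard proof the paper delegates to the literature.
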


\begin{proof}[Proof of Theorem \ref{thm:ce0b}]
Throughout the proof, we assume that $s>1$ is large enough to satisfy $s\va>1$ in $Q$. 

Equation \eqref{eq:lem01} yields
\begin{equation}
\label{eq:ce01}
\rho_1\pp_t w(x,t)-Lw(x,t)=\rho_2^2\pp_tu(x,t)-G(x,t),\quad(x,t)\in Q,
\end{equation}
where
\begin{equation}
w(x,t)=\rho_1\pp_t u(x,t)-Lu(x,t),\quad(x,t)\in Q.
\label{eq:ce02}
\end{equation}
Since $u(x,t)=0$, $(x,t)\in \pp\Omega\times (0,T)$ and 
$g(x,t)=0$, $(x,t) \in \pp\Omega\times (0,T)$, we have by \eqref{eq:1+1/2eq},
\begin{equation*}
w(x,t)
=\rho_1\pp_t u(x,t) - L u(x,t)  
=g(x,t)-\rho_2\pp_t^{\frac12} u(x,t)
=0, \quad
(x,t)\in\pp\Omega\times (0,T).
\end{equation*}
Applying the Lemma \ref{lem:celempb} to \eqref{eq:ce01}, we obtain
\begin{align}
\label{eq:ce03}
&\int_Q
\left[
(s\va_1)^{p_1-1} |\pp_t w|^2
+
(s\va_1)^{p_1+1}
|\nabla w|^2
+
(s\va_1)^{p_1+3}
|w|^2
\right]e^{2s\psi_1}\,dxdt
\\
&\leq
C\int_Q (s\va_1)^{p_1}|\pp_t u|^2 e^{2s\psi_1}\,dxdt
+C\int_Q (s\va_1)^{p_1}|G|^2 e^{2s\psi_1}\,dxdt
\nonumber \\
&\quad
+C
\int_{\Sigma}
(s\va_1)^{p_1+1}|\nabla w|^2 e^{2s\psi_1}
\,dSdt,
\nonumber
\end{align}
for $p_1\ge0$. Next by applying Lemma \ref{lem:celempb} to \eqref{eq:ce02}, we obtain
\begin{align}
\label{eq:ce04}
&\int_Q
\!
\Biggl[
(s\va_1)^{p_2-1}
\left(
|\pp_t u|^2
+\sum_{i,j=1}^n |\pp_i \pp_j u|^2
\right) \\
&\qquad
+(s\va_1)^{p_2+1}
|\nabla u|^2
+(s\va_1)^{p_2+3}
|u|^2
\Biggr]
\!
e^{2s\psi_1}\,dxdt
\nonumber \\
&\leq
C\int_Q (s\va_1)^{p_2}|w|^2 e^{2s\psi_1}\,dxdt
+C
\int_{\Sigma}
(s\va_1)^{p_2+1} |\nabla u|^2 
e^{2s\psi_1}
\,dSdt,
\nonumber
\end{align}
for $p_2\geq 0$.

Putting $p_2=p_1+1$ and substituting the estimate of 
$|\pp_t u|^2$ in \eqref{eq:ce04} into the right-hand side of \eqref{eq:ce03}, 
we obtain
\begin{align}
&\int_Q
\left[
(s\va_1)^{p_1-1} 
|\pp_t w|^2
+
(s\va_1)^{p_1+1}
|\nabla w|^2
+
(s\va_1)^{p_1+3}
|w|^2
\right]e^{2s\psi_1}\,dxdt
\nonumber \\
&\leq
C\int_Q (s\va_1)^{p_1+1}|w|^2 e^{2s\va_1}\,dxdt
+C\int_Q (s\va_1)^{p_1} 
|G|^2 e^{2s\psi_1}\,dxdt
+C B_{1,p_1},
\nonumber
\end{align}
where
\begin{equation*}
B_{1,p_1}=
\int_{\Sigma}
\left[
(s\va_1)^{p_1+1} 
|\nabla w|^2
+
(s\va_1)^{p_1+2} 
|\nabla u|^2
\right]
e^{2s\psi_1}
\,dSdt
.
\end{equation*}
Taking sufficiently large $s>0$, we can absorb the first term on the right-hand side of the above inequality into the left-hand side and we have
\begin{align}
\label{eq:ce05}
&\int_Q
\left[
(s\va_1)^{p_1-1} 
|\pp_t w|^2
+
(s\va_1)^{p_1+1}
|\nabla w|^2
+
(s\va_1)^{p_1+3}
|w|^2
\right]e^{2s\psi_1}\,dxdt \\
&\leq
C\int_Q (s\va_1)^{p_1} 
|G|^2 e^{2s\psi_1}\,dxdt
+C B_{1,p_1}.
\nonumber
\end{align}
By \eqref{eq:ce04} with $p_2=p_1+3$ and \eqref{eq:ce05}, we obitain
\begin{align}
\label{eq:ce06}
&\int_Q
\Biggl[
(s\va_1)^{p_1+1}
|\nabla (\rho_1 \pp_t -L)u|^2
+
(s\va_1)^{p_1+2}
\left(
|\pp_t u|^2
+\sum_{i,j=1}^n |\pp_i \pp_j u|^2
\right) \\
&\qquad
+(s\va_1)^{p_1+4}
|\nabla u|^2
+(s\va_1)^{p_1+6}
|u|^2
\Biggr]
e^{2s\psi_1}\,dxdt \nonumber \\
&
\leq
C\int_Q (s\va_1)^{p_1} |G|^2 e^{2s\psi_1}\,dxdt
+CB_{2,p_1},
\nonumber
\end{align}
where
\begin{equation*}
B_{2,p_1}=
\int_{\Sigma}
\left[
(s\va_1)^{p_1+1} 
|\nabla w|^2
+
(s\va_1)^{p_1+4} 
|\nabla u|^2
\right]
e^{2s\psi_1}
\,dSdt.
\end{equation*}

Let us choose $p_1=p+1$ in \eqref{eq:ce05}. Then from \eqref{eq:ce02} and \eqref{eq:ce05}, we have
\begin{equation*}
\int_Q
(s\va_1)^{p}
|\pp_t (\rho_1\pp_t u -L u)|^2
e^{2s\psi_1}\,dxdt 
\leq
C\int_Q (s\va_1)^{p+1} |G|^2 e^{2s\psi_1}\,dxdt
+CB_{1,p+1}.
\end{equation*}
Setting $u_0=\pp_t u$, we obtain
\begin{equation}
\label{eq:ce10}
\int_Q
(s\va_1)^{p}|\rho_1\pp_t u_0 -L u_0|^2e^{2s\psi_1}\,dxdt 
\leq
C\int_Q (s\va_1)^{p+1} |G|^2 e^{2s\psi_1}\,dxdt
+CB_{1,p+1}.
\end{equation}
If we use Lemma \ref{lem:celempb} with $v=u_0$ and applying \eqref{eq:ce10}, 
we obtain
\begin{align*}
&
\int_Q
\Biggl[
(s\va_1)^{p-1}
\left(
|\pp_t u_0|^2
+
\sum_{i,j=1}^n |\pp_i\pp_j u_0|^2
\right) \\
&\qquad
+
(s\va_1)^{p+1}
|\nabla u_0|^2
+
(s\va_1)^{p+3}
|u_0|^2
\Biggr]e^{2s\psi_1}\,dxdt\\
&\leq
C\int_Q (s\va_1)^{p+1} |G|^2 e^{2s\psi_1}\,dxdt
+C
\int_{\Sigma}
(s\va_1)^{p+1}
 |\nabla u_0|^2 
 e^{2s\psi_1}
\,dSdt
+CB_{1,p+1}.
\end{align*}
Recalling $u_0=\pp_tu$, we have
\begin{align}
\nonumber
&\int_Q
\Biggl[
(s\va_1)^{p-1}
\left(
|\pp_t^2 u|^2
+
\sum_{i,j=1}^n |\pp_t\pp_i\pp_j u|^2
\right) \\
&\qquad
+
(s\va_1)^{p+1}
|\nabla \pp_t u|^2
+
(s\va_1)^{p+3}
|\pp_t u|^2
\Biggr]e^{2s\psi_1}\,dxdt
\nonumber \\
&\leq
C\int_Q 
(s\va_1)^{p+1} |G|^2 e^{2s\psi_1}\,dxdt
+C B_{3,p},
\nonumber
\end{align}
where 
\begin{equation*}
B_{3,p}=
\int_{\Sigma}
\left[
(s\va_1)^{p+1}
|\nabla\pp_t u|^2
+
(s\va_1)^{p+2} 
|\nabla w|^2
+
(s\va_1)^{p+3} 
|\nabla u|^2
\right]
e^{2s\psi_1}
\,dSdt.
\end{equation*}
Hence using \eqref{eq:ce06},  we obtain
\begin{align*}
&
\int_Q
\Biggl[
(s\va_1)^{p-1}
\left(
|\pp_t^2 u|^2
+
\sum_{i,j=1}^n|\pp_t\pp_i \pp_j u|^2
\right)
+
(s\va_1)^{p+1}
|\nabla \pp_t u|^2  
\\
&\qquad
+
(s\va_1)^{p+2}
|\nabla (\rho_1 \pp_t -L)u|^2
+
(s\va_1)^{p+3}
\left( |\pp_t u|^2
+\sum_{i,j=1}^n|\pp_i \pp_j u|^2
\right)
\\
&\qquad+
(s\va_1)^{p+5}
 |\nabla u|^2
+
(s\va_1)^{p+7}
|u|^2
\Biggr]
e^{2s\psi_1}\,dxdt \\
&
\leq
C
\int_Q (s\va_1)^{p+1} \left|G\right|^2 e^{2s\psi_1}\,dxdt
+C B_{4,p},
\end{align*}
where
\begin{equation*}
B_{4,p}=
\int_{\Sigma}
\left[
(s\va_1)^{p+1}
|\nabla\pp_t u|^2
+
(s\va_1)^{p+2} 
|\nabla w|^2
+
(s\va_1)^{p+5} 
|\nabla u|^2
\right]
e^{2s\psi_1}
\,dSdt.
\end{equation*}
Finally, we consider the boundary term $B_4$. Since $\nabla g=0$ on $\Sigma$ is assumed,  $\nabla w=\nabla g-\rho_2\nabla \pp_t^{\frac12} u=-\rho_2\nabla\pp_t^{\frac12}u$ on $\Sigma$. Hence we have
\begin{align}
\nonumber
&
\int_Q
\Biggl[
(s\va_1)^{p-1}
\left(
|\pp_t^2 u|^2
+
\sum_{i,j=1}^n|\pp_t\pp_i \pp_j u|^2
\right)
+
(s\va_1)^{p+1}
|\nabla \pp_t u|^2  \\
&\qquad
+
(s\va_1)^{p+2}
|\nabla (\rho_1 \pp_t -L)u|^2
+
(s\va_1)^{p+3}
\left( |\pp_t u|^2
+\sum_{i,j=1}^n|\pp_i \pp_j u|^2
\right)
\nonumber \\
&\qquad
+
(s\va_1)^{p+5}
 |\nabla u|^2
+
(s\va_1)^{p+7}
|u|^2
\Biggr]
e^{2s\psi_1}\,dxdt
\nonumber\\
&
\leq
C
\int_Q (s\va_1)^{p+1} \left|G\right|^2 e^{2s\psi_1}\,dxdt 
\nonumber \\
&\quad
+C 
\int_{\Sigma}
\left[
(s\va_1)^{p+1}
|\nabla \pp_t u|^2 
+
(s\va_1)^{p+2}
|\nabla \pp_t^{\frac12} u|^2
+ 
(s\va_1)^{p+5}
|\nabla u|^2
\right]
e^{2s\psi_1}
\,dSdt .
\nonumber
\end{align}
Thus we obtain \eqref{eq:pceb}.
\end{proof}

\begin{proof}[Proof of Theorem \ref{thm:ce0i}] 
By using Lemma \ref{lem:celempi} instead of Lemma \ref{lem:celempb}, we can 
prove Theorem \ref{thm:ce0i} in the same way as Theorem \ref{thm:ce0b}. 
\end{proof}

Furthermore we need Carleman estimates for elliptic equations in the proof of 
the stability estimates in inverse source problems which we will develop in \S\ref{proof}.

Let us assume that $\wa_{ij}\in C^1(\overline{\Omega})$, $\wa_{ij}= \wa_{ji}$ ($i,j=1,\ldots,n$), 
$\wb_j \in C(\overline{\Omega})$ ($j=1,\ldots,n$), 
$\wc \in C(\overline{\Omega})$, and 
that there exists a constant $\wm>0$ such that
\begin{equation*}
\frac1{\wm} |\xi|^2
\leq
\sum_{i,j=1}^n \wa_{ij}(x) \xi_i \xi_j
\leq
\wm |\xi|^2,
\quad
\xi=(\xi_1,\ldots, \xi_n) \in \R^n,\
x \in \overline{\Omega}.
\end{equation*}
We consider the following symmetric uniformly elliptic operator.
\begin{equation*}
\widetilde{L} \wv(x) :=\sum_{i,j=1}^n \pp_i (\wa_{ij}(x) \pp_j \wv(x))
-\sum_{j=1}^n \wb_j (x)\pp_j \wv(x)
- \wc(x)\wv(x),\ x\in \Omega.
\end{equation*}
Set $\wva_k(x):=\va_k(x,t_0)$, $x\in \Omega$ and 
$\wpsi_k(x):=\psi_k(x,t_0)$, $x\in \Omega$ for $k=1,2$.
Then we have the following Lemmas.

\begin{lem}
\label{lem:celemeb}
Let $p \geq 0$. There exists $\la_0>0$ such that for any $\la>\la_0$, we can 
choose $s_0(\la)>0$ for which there exists $C=C(s_0,\la)>0$ such that
\begin{align*}
&
\int_\Omega
\left[
(s\wva_1)^{p-1}
\sum_{i,j=1}^n|\pp_i \pp_j \wv|^2
+
(s\wva_1)^{p+1}
|\nabla \wv|^2
+
(s\wva_1)^{p+3}
 |\wv|^2
\right]
e^{2s \wpsi_1}\,dx
\\
&
\leq
C\int_\Omega (s\wva_1)^{p} |\widetilde{L} \wv|^2 e^{2s \wpsi_1}\,dx
+
C
\int_{\gamma}
(s\wva_1)^{p+1}|\nabla \wv|^2 e^{2s\wpsi_1}
\,dS,
\end{align*}
for all $s> s_0$ and all $\wv \in H^2(\Omega)$ satisfying $\wv (x)=0$, $x\in \pp\Omega$. 
\end{lem}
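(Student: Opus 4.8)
The plan is to establish Lemma~\ref{lem:celemeb} by the classical Fursikov--Imanuvilov conjugation method for second-order elliptic operators (as in \cite{FIm, Im, Imanuvilov-Yamamoto98}); indeed this lemma is the stationary analogue of the parabolic Lemmas~\ref{lem:celempb} and \ref{lem:celempi} with time frozen at $t_0$, and the weight $\wpsi_1(x)=\psi_1(x,t_0)$ equals, up to the fixed positive constant $1/\ell(t_0)$, the standard weight $e^{\la d_1}-e^{2\la\|d_1\|_{C(\overline{\Omega})}}$. First I would set $z=e^{s\wpsi_1}\wv$ and work with the conjugated operator $P_s z:=e^{s\wpsi_1}\widetilde{L}(e^{-s\wpsi_1}z)$. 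Since $\wpsi_1$ and $\wva_1$ differ only by a spatial constant, $\nabla\wpsi_1=\nabla\wva_1=\la\wva_1\nabla d_1$, so each power of $s$ produced by the conjugation comes paired with $\la\wva_1\nabla d_1$; this is what generates the positive powers of $s\wva_1$ on the left-hand side.

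Next I would split $P_s=P_s^{+}+P_s^{-}$ into its formally symmetric and skew-symmetric parts, the principal second-order term together with the zeroth-order term $s^2\la^2\wva_1^2\bigl(\sum_{i,j}\wa_{ij}\pp_i d_1\,\pp_j d_1\bigr)z$ belonging to $P_s^{+}$ and the first-order $s\la\wva_1$-terms to $P_s^{-}$, and expand
\[
\int_\Omega|\widetilde{L}\wv|^2e^{2s\wpsi_1}\,dx=\|P_s z\|_{L^2(\Omega)}^2=\|P_s^{+}z\|^2+\|P_s^{-}z\|^2+2\,(P_s^{+}z,\,P_s^{-}z)_{L^2(\Omega)}.
\]
Integrating the cross term $2(P_s^{+}z,P_s^{-}z)$ by parts produces the dominant positive quantity controlling $\int_\Omega[(s\wva_1)^{-1}\sum_{i,j}|\pp_i\pp_j z|^2+(s\wva_1)|\nabla z|^2+(s\wva_1)^3|z|^2]\,dx$. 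The uniform bound $|\nabla d_1|\ge\sigma_1>0$ on $\overline{\Omega}$, combined with ellipticity (so $\sum_{i,j}\wa_{ij}\pp_i d_1\,\pp_j d_1\ge\sigma_1^2/\wm>0$), is exactly the pseudoconvexity that keeps the leading coefficient strictly positive once $\la$ is fixed large and then $s\ge s_0(\la)$; the coefficient errors from $\wa_{ij}\in C^1$, $\wb_j,\wc\in C$ are all lower order in $s$ and get absorbed into the left-hand side at this stage.

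The boundary terms are the delicate point. Because $\wv=0$ on $\pp\Omega$ we have $z=0$ on $\pp\Omega$, hence all tangential derivatives of $z$ vanish there and every surface integral collapses to a multiple of $\sum_{i,j}\wa_{ij}\pp_i d_1\,\nu_j\,|\pp_\nu\wv|^2$, weighted by a power of $s\wva_1$ and by $e^{2s\wpsi_1}$. On $\pp\Omega\setminus\gamma$ this factor is nonpositive by the defining sign condition of $d_1$ (with $\wa_{ij}=a_{ij}$ as in the intended application), so that contribution can simply be discarded, and only the integral over $\gamma$ survives, giving precisely the $C\int_\gamma(s\wva_1)^{p+1}|\nabla\wv|^2e^{2s\wpsi_1}\,dS$ term. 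I expect the main obstacle to be this bookkeeping: verifying that every genuinely dangerous boundary term carries the favorable sign on $\pp\Omega\setminus\gamma$ and that the surviving $\gamma$-term appears with exactly the stated power.

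Finally I would undo the substitution. From $\nabla z=e^{s\wpsi_1}(\nabla\wv+s\la\wva_1\wv\,\nabla d_1)$, and the analogous identity for second derivatives, the weighted norms of $z$ and of $e^{s\wpsi_1}$ times the corresponding derivatives of $\wv$ are equivalent modulo the lower-order terms already absorbed, which turns the estimate for $z$ into the asserted inequality for $\wv$ in the case $p=0$. The general exponent $p\ge0$ then follows by carrying the extra polynomial factor $(s\wva_1)^p$ through the same computation; its gradient contributes only terms of order $\la(s\wva_1)^p$, which are again absorbable, exactly as for the parabolic Lemmas~\ref{lem:celempb} and \ref{lem:celempi}.
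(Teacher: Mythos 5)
Your proposal is correct and follows precisely the route the paper intends: the paper in fact omits the proof of Lemma~\ref{lem:celemeb}, remarking only that it ``can be shown in the same manner as the parabolic case by means of integration by parts,'' which is exactly your Fursikov--Imanuvilov conjugation $z=e^{s\wpsi_1}\wv$, the symmetric/antisymmetric splitting of $P_s$ with the cross term yielding the positive lower-order terms, and the sign condition $\sum_{i,j}a_{ij}\pp_i d_1\,\nu_j\le 0$ on $\pp\Omega\setminus\gamma$ disposing of all boundary contributions except the $\gamma$-integral. Your parenthetical caveat that discarding the boundary terms off $\gamma$ uses the principal part of $\widetilde{L}$ being compatible with the $a_{ij}$ used to build $d_1$ is a point the paper leaves implicit (and which is harmless in its applications, where either $\wa_{ij}=a_{ij}$ as in \eqref{eq:pr17}, or all boundary traces vanish as in Lemma~\ref{lem:ce3rd1}), so your sketch, if anything, is slightly more careful than the source.
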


\begin{lem}
\label{lem:celemei}
Let $p \geq 0$. There exists $\la_0>0$ such that for any $\la>\la_0$, we can 
choose $s_0(\la)>0$ for which there exists $C=C(s_0,\la)>0$ such that
\begin{align*}
&
\int_\Omega
\left[
(s\wva_2)^{p-1}
\sum_{i,j=1}^n|\pp_i \pp_j \wv|^2
+
(s\wva_2)^{p+1}
|\nabla \wv|^2
+
(s\wva_2)^{p+3}
 |\wv|^2
\right]
e^{2s \wpsi_2}\,dx
\\
&
\leq
C\int_\Omega (s\wva_2)^{p} |\widetilde{L} \wv|^2 e^{2s \wpsi_2}\,dx
+
C
\int_{\omega}
(s\wva_2)^{p+3}|\wv|^2 e^{2s\wpsi_2}
\,dx,
\end{align*}
for all $s> s_0$ and all $\wv \in H^2(\Omega)$ satisfying $\wv (x)=0$, $x\in \pp\Omega$.
\end{lem}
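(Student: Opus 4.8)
The plan is to reduce the claim to the classical elliptic Carleman estimate of Fursikov--Imanuvilov type. Since $\ell(t_0)=t_0(T-t_0)$ is a fixed positive constant, the weights are $\wva_2=e^{\la d_2}/\ell(t_0)$ and $\wpsi_2=(e^{\la d_2}-e^{2\la\|d_2\|_{C(\overline{\Omega})}})/\ell(t_0)$; absorbing the constant $1/\ell(t_0)$ into the large parameter $s$, these are exactly the standard Carleman weights associated with the distance function $d_2$. Because $\widetilde{L}$ is symmetric and uniformly elliptic with $C^1$ principal coefficients and the geometry of $d_2$ (namely $|\nabla d_2|>\sigma_2$ on $\overline{\Omega\setminus\omega_0}$, $d_2>0$ in $\Omega$, and $d_2=0$ on $\pp\Omega$) is precisely the configuration used for interior-observation estimates, I would either invoke the standard estimate directly (as for the parabolic Lemmas~\ref{lem:celempb} and \ref{lem:celempi}, cf. \cite{FIm, Im, Imanuvilov-Yamamoto98}) or reproduce its short derivation as follows.

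The core computation is the usual conjugation argument. Setting $z=e^{s\wpsi_2}\wv$, I would split the conjugated operator $e^{s\wpsi_2}\widetilde{L}(e^{-s\wpsi_2}\,\cdot\,)$ into its formally self-adjoint and skew-adjoint parts $P_+$ and $P_-$, and expand $\int_\Omega|e^{s\wpsi_2}\widetilde{L}\wv|^2\,dx=\|P_+z\|^2+\|P_-z\|^2+2(P_+z,P_-z)$. The cross term $2(P_+z,P_-z)$ is evaluated by integration by parts. Since $\nabla\wpsi_2=\la e^{\la d_2}\nabla d_2/\ell(t_0)$, the weight-gradient factors generate the dominant positive bulk terms of orders $s^3\wva_2^3|\wv|^2$, $s\wva_2|\nabla\wv|^2$ and $(s\wva_2)^{-1}\sum_{i,j}|\pp_i\pp_j\wv|^2$, with coefficients controlled from below by powers of $|\nabla d_2|^2$ and of $\la$; these lower bounds hold exactly where $|\nabla d_2|>\sigma_2$. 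Multiplying the resulting base inequality by $(s\wva_2)^p$ produces the stated powers $p-1,\,p+1,\,p+3$.

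Two structural points remain, the second being the main obstacle. \emph{Boundary terms:} since $\wv=0$ on $\pp\Omega$ only normal derivatives survive there, and because $d_2=0$ on $\pp\Omega$ forces $\pp_\nu d_2\le0$, the boundary integral carries a favorable sign and may be discarded---this is why, in contrast to Lemma~\ref{lem:celemeb}, \emph{no} boundary observation term is needed. \emph{Critical set:} the estimate degenerates on $\omega_0$, where $\nabla d_2$ may vanish. I would introduce a cutoff $\chi$ with $\chi\equiv1$ on $\Omega\setminus\omega$ and $\chi\equiv0$ on $\omega_0$, apply the non-degenerate estimate to $\chi\wv$, and absorb the commutator $[\widetilde{L},\chi]\wv$, which is first order and supported in $\omega\setminus\omega_0\Subset\omega$. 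The hard part is that the right-hand side of the claim contains only $\int_\omega(s\wva_2)^{p+3}|\wv|^2 e^{2s\wpsi_2}\,dx$, whereas the commutator naturally produces a term in $|\nabla\wv|^2$ on $\omega\setminus\omega_0$; eliminating this gradient contribution requires a local Caccioppoli-type estimate that trades $\int_{\omega\setminus\omega_0}|\nabla\wv|^2e^{2s\wpsi_2}\,dx$ against $\int_\omega|\wv|^2e^{2s\wpsi_2}\,dx$ plus a small multiple of the bulk left-hand side, valid for $s$ sufficiently large.
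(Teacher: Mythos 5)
Your proposal is correct and takes essentially the same approach as the paper: the paper in fact omits the proof of Lemma \ref{lem:celemei}, saying only that it ``can be shown in the same manner as the parabolic case by means of integration by parts'' (i.e.\ the standard Fursikov--Imanuvilov conjugation argument of \cite{FIm, Im, Imanuvilov-Yamamoto98} that you reproduce, including the favorable sign of the boundary term coming from $d_2=0$ on $\pp\Omega$, hence $\pp_\nu d_2\le 0$, and the cutoff treatment of the degenerate set $\omega_0$ with a weighted Caccioppoli estimate to reduce the local observation to $\int_\omega (s\wva_2)^{p+3}|\wv|^2 e^{2s\wpsi_2}\,dx$). The one routine step you leave implicit is recovering the gradient and Hessian parts of the left-hand side over $\omega$ itself, since the estimate applied to $\chi\wv$ controls them only on $\Omega\setminus\omega$; this follows from the same weighted Caccioppoli and interior weighted $H^2$ estimates, with the resulting $(s\wva_2)^{p+3}|\wv|^2$ contributions split between the bulk term (absorbed for large $s$) and the observation integral.
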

These lemmas can be shown in the same manner as the parabolic case by means of integration by parts. 
Hence we omit the proofs of these lemmas here.

We conclude  this section by introducing Carleman estimates for the third order partial differential equations which we use in the proof of the stability estimates in inverse problems of determining the diffusion coefficients. 

Let $\bp=(p_1, \ldots, p_2)\in \{C^1(\overline{\Omega})\}^n$. 

\begin{lem}
\label{lem:ce3rd1}
We assume that there exists $m_1 >0$ such that 
$|\bp (x)\cdot \nabla d_1(x)|\geq m_1$, $x\in \overline{\Omega}$. 
Then there exists $\la_0>0$ such that for any $\la>\la_0$, 
we can choose $s_0(\la)>0$ for which there exists $C=C(s_0,\la)>0$ such that 
\begin{align*}
&
\int_\Omega
\Biggl[
s\wva_1
\sum_{i,j,k=1}^n|\pp_i \pp_j \pp_k \wv|^2
+
(s\wva_1)^{2}
|\nabla \triangle \wv|^2
\\
&\qquad
+
(s\wva_1)^{3}
\sum_{i,j=1}^n|\pp_i \pp_j \wv|^2
+(s\wva_1)^{5}
\left(|\nabla \wv|^2+ |\wv|^2 \right)
\Biggr]
e^{2s \wpsi_1}\,dx
\\
&
\leq
C\int_\Omega \left(  |\nabla (\bp \cdot \nabla \triangle \wv)|^2 + |\bp \cdot \nabla \triangle \wv|^2\right) e^{2s \wpsi_1}\,dx,
\end{align*}
for all $s> s_0$ and all $\wv \in H^4(\Omega)$ satisfying 
$|\wv(x)|=|\nabla \wv(x)|=|\triangle \wv(x)|=|\nabla \triangle \wv(x)|=0$, $x\in \pp\Omega$  
and 
$|\nabla \pp_k \wv(x)|=0$, $x\in \gamma$ ($k=1,2,\ldots, n$). 
\end{lem}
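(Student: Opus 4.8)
The plan is to exploit the factorisation of the third-order operator appearing on the right-hand side, namely $\bp\cdot\nabla\triangle = T\circ\triangle$, where $T\wv:=\bp\cdot\nabla\wv$ is a first-order transport operator and $\triangle$ is the Laplacian. Writing $z:=\triangle\wv$, the integrand $|\nabla(\bp\cdot\nabla\triangle\wv)|^2+|\bp\cdot\nabla\triangle\wv|^2$ is exactly $|\nabla(Tz)|^2+|Tz|^2$, so the argument splits into two independent pieces: a first-order Carleman estimate for $T$ (which controls $z$ and $\nabla z$ by $Tz$ and $\nabla(Tz)$), followed by the elliptic Carleman estimate of Lemma \ref{lem:celemeb} applied to $\wv$ and to each $\pp_\ell\wv$, which recovers the derivatives of $\wv$ up to third order from those of $z=\triangle\wv$. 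Since $\wv\in H^4(\Omega)$, every conjugation and integration by parts below is justified.

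First I would establish the transport estimate, which is where the hypothesis $|\bp\cdot\nabla d_1|\ge m_1$ enters. Conjugating by the weight, set $\zeta:=z\,e^{s\wpsi_1}$, so that $e^{s\wpsi_1}Tz=T\zeta-s(\bp\cdot\nabla\wpsi_1)\zeta$; expanding $\int_\Omega|e^{s\wpsi_1}Tz|^2\,dx$ and integrating the cross term by parts gives
\[
\int_\Omega|e^{s\wpsi_1}Tz|^2\,dx
=\int_\Omega|T\zeta|^2\,dx
+s^2\int_\Omega|\bp\cdot\nabla\wpsi_1|^2|\zeta|^2\,dx
+s\int_\Omega\dd\big[(\bp\cdot\nabla\wpsi_1)\bp\big]|\zeta|^2\,dx
-s\int_{\pp\Omega}(\bp\cdot\nabla\wpsi_1)(\bp\cdot\nu)|\zeta|^2\,dS.
\]
Because $\nabla\wpsi_1=\la\wva_1\nabla d_1$, one has $\bp\cdot\nabla\wpsi_1=\la\wva_1(\bp\cdot\nabla d_1)$, whence $|\bp\cdot\nabla\wpsi_1|^2\ge\la^2 m_1^2\wva_1^2$; moreover $\dd[(\bp\cdot\nabla\wpsi_1)\bp]=\la^2\wva_1(\bp\cdot\nabla d_1)^2+O(\la)$ is nonnegative for $\la$ large. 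The boundary integral vanishes since $z=\triangle\wv=0$ on $\pp\Omega$, so discarding $\int_\Omega|T\zeta|^2\,dx$ and the $s$-term leaves
\[
\int_\Omega(s\wva_1)^2|z|^2 e^{2s\wpsi_1}\,dx
\le C\int_\Omega|Tz|^2 e^{2s\wpsi_1}\,dx
\qquad(\mathrm{A0}).
\]
Applying the same estimate to each $\pp_\ell z$ (whose boundary values vanish because $\nabla\triangle\wv=0$ on $\pp\Omega$), using $T(\pp_\ell z)=\pp_\ell(Tz)-(\pp_\ell\bp)\cdot\nabla z$ to bound the source by $2|\nabla(Tz)|^2+C|\nabla z|^2$, summing over $\ell$ and absorbing the lower-order term $C|\nabla z|^2$ into the left for $s$ large, I obtain
\[
\int_\Omega(s\wva_1)^2|\nabla z|^2 e^{2s\wpsi_1}\,dx
\le C\int_\Omega|\nabla(Tz)|^2 e^{2s\wpsi_1}\,dx
\qquad(\mathrm{A1}).
\]

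Next I would pass from $z$ to $\wv$ via Lemma \ref{lem:celemeb} with $\widetilde L=\triangle$. For the third- and second-order derivatives of $\wv$, apply the lemma to each $\pp_\ell\wv$, which solves $\triangle(\pp_\ell\wv)=\pp_\ell z$, with $p=2$: here $\pp_\ell\wv=0$ on $\pp\Omega$ (as $\nabla\wv=0$ there) and the boundary integral $\int_\gamma(s\wva_1)^3|\nabla\pp_\ell\wv|^2e^{2s\wpsi_1}\,dS$ vanishes by the hypothesis $|\nabla\pp_k\wv|=0$ on $\gamma$, so summing over $\ell$ yields
\[
\int_\Omega\Big[s\wva_1\sum_{i,j,k}|\pp_i\pp_j\pp_k\wv|^2
+(s\wva_1)^3\sum_{i,j}|\pp_i\pp_j\wv|^2
+(s\wva_1)^5|\nabla\wv|^2\Big]e^{2s\wpsi_1}\,dx
\le C\int_\Omega(s\wva_1)^2|\nabla z|^2 e^{2s\wpsi_1}\,dx,
\]
whose right-hand side is controlled by $(\mathrm{A1})$. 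Finally, applying Lemma \ref{lem:celemeb} to $\wv$ itself with $p=2$ (the boundary integral again vanishing since $\nabla\wv=0$ on $\gamma$) bounds $(s\wva_1)^5|\wv|^2$ by $C(s\wva_1)^2|z|^2$, which is controlled by $(\mathrm{A0})$. Collecting these estimates together with $(\mathrm{A1})$, which supplies the term $(s\wva_1)^2|\nabla\triangle\wv|^2=(s\wva_1)^2|\nabla z|^2$ directly, accounts for all six terms on the left of the asserted inequality against $C\int_\Omega(|\nabla(Tz)|^2+|Tz|^2)e^{2s\wpsi_1}\,dx$.

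The main obstacle is the first-order transport estimate $(\mathrm{A0})$–$(\mathrm{A1})$: unlike a second-order Carleman estimate, $T$ has no good principal term, so all positivity must be extracted from the weight through $s^2|\bp\cdot\nabla\wpsi_1|^2$, and one must verify that the hypothesis $|\bp\cdot\nabla d_1|\ge m_1$ (the counterpart of \eqref{eq:r1}) makes this term dominate the indefinite $s$-order remainder uniformly in $x$ after fixing $\la$ large. The remaining difficulty is purely bookkeeping: matching the output weight powers of each estimate to the input powers of the next, and checking that every boundary contribution is annihilated by the prescribed vanishing of $\wv$, $\nabla\wv$, $\triangle\wv$, $\nabla\triangle\wv$ on $\pp\Omega$ and of $\nabla\pp_k\wv$ on $\gamma$.
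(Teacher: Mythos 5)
Your proposal is correct and takes essentially the same route as the paper: your transport estimates $(\mathrm{A0})$--$(\mathrm{A1})$ are precisely the paper's Lemma \ref{lem:ce1st1} applied to $z=\triangle\wv$ (proved there by the identical conjugation $\ww=\wv e^{s\wpsi_1}$ and integration by parts, with the lower-order terms absorbed for large $s$), and your elliptic step is the paper's application of Lemma \ref{lem:celemeb} with $p=2$ to $\wv$ and to each $\pp_\ell\wv$, with every boundary contribution annihilated by the same vanishing hypotheses. The only cosmetic differences are that you re-derive the first-order estimate inline rather than citing it as a separate lemma, and you discard the $O(s\la\wva_1)$ divergence term by positivity in $\la$ where the paper absorbs it via large $s$; both are valid.
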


\begin{lem}
\label{lem:ce3rd2}
We assume that there exists $m_2 >0$ such that 
$|\bp (x)\cdot \nabla d_2(x)|\geq m_2$, $x\in \overline{\Omega\setminus \omega}$. 
Then there exists $\la_0>0$ such that for any $\la>\la_0$, 
we can choose $s_0(\la)>0$ for which there exists $C=C(s_0,\la)>0$ such that 
\begin{align*}
&
\int_\Omega
\Biggl[
s\wva_2
\sum_{i,j,k=1}^n|\pp_i \pp_j \pp_k \wv|^2
+
(s\wva_2)^{2}
|\nabla \triangle \wv|^2
\\
&\qquad
+
(s\wva_2)^{3}
\sum_{i,j=1}^n|\pp_i \pp_j \wv|^2
+(s\wva_2)^{5}
\left(|\nabla \wv|^2+ |\wv|^2 \right)
\Biggr]
e^{2s \wpsi_2}\,dx
\\
&
\leq
C\int_\Omega \left(  |\nabla (\bp \cdot \nabla \triangle \wv)|^2 + |\bp \cdot \nabla \triangle \wv|^2\right) e^{2s \wpsi_2}\,dx,
\end{align*}
for all $s> s_0$ and all $\wv \in H^4(\Omega)$ satisfying 
$|\wv(x)|=|\nabla \wv(x)|=|\triangle \wv(x)|=|\nabla \triangle \wv(x)|=0$, $x\in \pp\Omega$ 
and 
$\wv(x)=0$, $x\in \omega$.
\end{lem}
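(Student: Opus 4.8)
The plan is to deduce the estimate from two ingredients: a first-order (transport-type) Carleman estimate governed by the non-degeneracy $|\bp\cdot\nabla d_2|\geq m_2$, and the elliptic interior Carleman estimate of Lemma \ref{lem:celemei} applied with $\widetilde{L}=\triangle$ (that is, $\wa_{ij}=\delta_{ij}$, $\wb=0$, $\wc=0$). The observation term on $\omega$ appearing in Lemma \ref{lem:celemei} will be forced to vanish by the hypothesis $\wv=0$ in $\omega$, which is precisely what turns the estimate into an observation-free bound; likewise, the boundary conditions $|\wv|=|\nabla\wv|=|\triangle\wv|=|\nabla\triangle\wv|=0$ on $\pp\Omega$ are exactly what is needed to discard every boundary integral produced below.

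First I would establish a first-order Carleman estimate for the operator $\bp\cdot\nabla$. Writing $\eta=e^{s\wpsi_2}\zeta$ and using $\nabla\wpsi_2=\la\wva_2\nabla d_2$, the conjugated operator is $\bp\cdot\nabla\eta-s(\bp\cdot\nabla\wpsi_2)\eta$, whose zeroth-order part satisfies $|s(\bp\cdot\nabla\wpsi_2)|\geq s\la m_2\wva_2$ on $\overline{\Omega\setminus\omega}$. Expanding $\int|e^{s\wpsi_2}\bp\cdot\nabla\zeta|^2\,dx$ and integrating the cross term by parts (the boundary contribution vanishes when $\zeta=0$ on $\pp\Omega$), the dominant term is $s^2\la^2\int\wva_2^2(\bp\cdot\nabla d_2)^2\zeta^2 e^{2s\wpsi_2}$; since $\zeta$ will vanish on $\omega$, this integral effectively runs over $\Omega\setminus\omega$, where the non-degeneracy yields, for $\la$ and $s$ large,
\[
\int(s\wva_2)^2|\zeta|^2 e^{2s\wpsi_2}\,dx\leq C\int|\bp\cdot\nabla\zeta|^2 e^{2s\wpsi_2}\,dx.
\]
Taking $\zeta=\triangle\wv$ (which vanishes on $\omega$ and on $\pp\Omega$) controls $\int(s\wva_2)^2|\triangle\wv|^2e^{2s\wpsi_2}$ by $\int|\bp\cdot\nabla\triangle\wv|^2e^{2s\wpsi_2}$. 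Taking $\zeta=\pp_k\triangle\wv$, using $\bp\cdot\nabla(\pp_k\triangle\wv)=\pp_k(\bp\cdot\nabla\triangle\wv)-(\pp_k\bp)\cdot\nabla\triangle\wv$, summing over $k$, and absorbing the lower-order remainder $C\int|\nabla\triangle\wv|^2e^{2s\wpsi_2}$ into the left-hand side for $s$ large, gives
\[
\int(s\wva_2)^2|\nabla\triangle\wv|^2 e^{2s\wpsi_2}\,dx\leq C\int|\nabla(\bp\cdot\nabla\triangle\wv)|^2 e^{2s\wpsi_2}\,dx.
\]

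Next I would bootstrap with the elliptic estimate. Applying Lemma \ref{lem:celemei} with $\widetilde{L}=\triangle$ to $\pp_k\wv$ (which satisfies $\pp_k\wv=0$ on $\pp\Omega$ since $\nabla\wv=0$ there, and $\pp_k\wv=0$ in $\omega$, so its observation term drops) with $p=2$, and using $\triangle\pp_k\wv=\pp_k\triangle\wv$, I obtain after summing over $k$ a bound for $s\wva_2\sum_{i,j,k}|\pp_i\pp_j\pp_k\wv|^2+(s\wva_2)^3\sum_{i,j}|\pp_i\pp_j\wv|^2+(s\wva_2)^5|\nabla\wv|^2$ in terms of $\int(s\wva_2)^2|\nabla\triangle\wv|^2e^{2s\wpsi_2}$, which is already controlled. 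Applying Lemma \ref{lem:celemei} to $\wv$ itself with $p=2$ bounds $(s\wva_2)^5|\wv|^2$ by $\int(s\wva_2)^2|\triangle\wv|^2e^{2s\wpsi_2}$, again already controlled. Collecting these with the direct bound on $(s\wva_2)^2|\nabla\triangle\wv|^2$ yields the five left-hand terms with their stated powers $1,2,3,5,5$.

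The main obstacle is Step one: establishing the transport Carleman estimate and, in particular, verifying that non-degeneracy only on $\overline{\Omega\setminus\omega}$ suffices. The key point to check is that every quantity to which the transport estimate is applied, namely $\triangle\wv$ and $\pp_k\triangle\wv$, vanishes identically on $\omega$ because $\wv\equiv0$ there, so the relevant integrals localize to $\Omega\setminus\omega$ where $|\bp\cdot\nabla d_2|\geq m_2$ supplies the coercive lower bound; no cut-off and no interior observation term survive. A secondary technical matter is the order of quantifiers, fixing $\la$ first and then choosing $s_0$, so that all commutator remainders (from differentiating $\bp$ and from $\triangle\pp_k=\pp_k\triangle$) together with the first-order cross term are absorbed.
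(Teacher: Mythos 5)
Your proposal is correct and takes essentially the same route as the paper: the paper proves Lemma \ref{lem:ce3rd2} by applying the first-order estimate of Lemma \ref{lem:ce1st2} to $\wy=\triangle \wv$ (your step one simply re-derives that lemma inline, via the same conjugation $\ww=\wv e^{s\wpsi_2}$, integration by parts, and localization to $\Omega\setminus\omega$ using $\wv\equiv 0$ in $\omega$) and then bootstraps with the elliptic Carleman estimate of Lemma \ref{lem:celemei} with $p=2$ applied to $\wv$ and to $\pp_k\wv$, exactly as you do. Your checks that $\triangle\wv$, $\pp_k\triangle\wv$, and $\pp_k\wv$ all vanish in $\omega$ (so the observation terms drop) and that the boundary hypotheses kill every boundary contribution are precisely the points the paper relies on when it says the proof of Lemma \ref{lem:ce3rd1} carries over.
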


To establish the  Carleman estimate for  $\bp \cdot \nabla \triangle \wv$, 
we start by proving the first order partial differential equations $\bp \cdot \nabla \wv$. 

\begin{lem}
\label{lem:ce1st1}
We assume that there exists $m_1 >0$ such that 
$|\bp (x)\cdot \nabla d_1(x)|\geq m_1$, $x\in \overline{\Omega}$. 
Then there exists $\la_0>0$ such that for any $\la>\la_0$, 
we can choose $s_0(\la)>0$ for which there exists $C=C(s_0,\la)>0$ such that 
\begin{equation*}
\int_\Omega
(s\wva_1)^{2}
\left(
|\nabla \wv|^2
+
|\wv|^2
\right)
e^{2s \wpsi_1}\,dx
\leq
C\int_\Omega \left(  |\nabla (\bp \cdot \nabla \wv)|^2 + |\bp \cdot \nabla  \wv|^2\right) e^{2s \wpsi_1}\,dx,
\end{equation*}
for all $s> s_0$ and all $\wv \in H^2(\Omega)$ satisfying 
$|\wv(x)|=|\nabla \wv(x)|=0$, $x\in \pp\Omega$.
\end{lem}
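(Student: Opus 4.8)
The plan is to conjugate the first-order operator $\bp\cdot\nabla$ by the Carleman weight and reduce the estimate to a direct integration-by-parts computation, whose favorable sign comes entirely from the non-characteristic hypothesis $|\bp\cdot\nabla d_1|\geq m_1$. Throughout I would use that $\nabla\wpsi_1=\nabla\wva_1=\la\wva_1\nabla d_1$ and that $\wva_1$ is bounded below by a positive constant on $\overline{\Omega}$. Writing $f:=\bp\cdot\nabla\wv$ and setting $z:=e^{s\wpsi_1}\wv$, a direct computation gives the conjugated identity
\[
e^{s\wpsi_1}f=\bp\cdot\nabla z-s\la\wva_1(\bp\cdot\nabla d_1)z.
\]

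First I would establish the zeroth-order estimate $\int_\Omega(s\wva_1)^2|\wv|^2e^{2s\wpsi_1}\,dx\leq C\int_\Omega|f|^2e^{2s\wpsi_1}\,dx$. Expanding $\int_\Omega|e^{s\wpsi_1}f|^2\,dx$ via the conjugated identity produces three terms: the nonnegative $\|\bp\cdot\nabla z\|_{L^2}^2$, the diagonal term $s^2\la^2\int\wva_1^2(\bp\cdot\nabla d_1)^2|z|^2$, and a cross term $-2s\la\,\mathrm{Re}\int\wva_1(\bp\cdot\nabla d_1)\bar z\,(\bp\cdot\nabla z)$. In the cross term I use $2\,\mathrm{Re}(\bar z\,\bp\cdot\nabla z)=\bp\cdot\nabla|z|^2$ and integrate by parts; since $\wv=0$ on $\pp\Omega$ forces $z=0$ there, no boundary term survives, and the volume term yields $+s\la^2\int\wva_1(\bp\cdot\nabla d_1)^2|z|^2$ (nonnegative) plus an $O(s\la)$ remainder coming from $\bp\cdot\nabla(\bp\cdot\nabla d_1)$ and $\ddd\,\bp$. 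Discarding the nonnegative pieces and using $(\bp\cdot\nabla d_1)^2\geq m_1^2$ gives $\int|e^{s\wpsi_1}f|^2\geq s^2\la^2m_1^2\int\wva_1^2|z|^2-Cs\la\int\wva_1|z|^2$; because $\wva_1$ is bounded below, for $s\geq s_0(\la)$ the remainder is absorbed by half of the leading term, and substituting $z=e^{s\wpsi_1}\wv$ yields the zeroth-order bound.

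Next I would upgrade to the gradient by differentiating $\bp\cdot\nabla\wv=f$ in $x_k$, which gives $\bp\cdot\nabla(\pp_k\wv)=\pp_k f-(\pp_k\bp)\cdot\nabla\wv$. The key point is that $\nabla\wv=0$ on $\pp\Omega$ means each $\pp_k\wv$ again vanishes on $\pp\Omega$, so I may reapply the zeroth-order estimate with $\wv$ replaced by $\pp_k\wv\in H^1_0(\Omega)$. Using $|(\pp_k\bp)\cdot\nabla\wv|\leq C|\nabla\wv|$ (valid since $\bp\in C^1(\overline{\Omega})$), this produces $\int(s\wva_1)^2|\pp_k\wv|^2e^{2s\wpsi_1}\leq C\int(|\pp_k f|^2+|\nabla\wv|^2)e^{2s\wpsi_1}$. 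Summing over $k$ and absorbing $\int|\nabla\wv|^2e^{2s\wpsi_1}$ into the left-hand side (legitimate because $(s\wva_1)^2\to\infty$ as $s\to\infty$) gives $\int(s\wva_1)^2|\nabla\wv|^2e^{2s\wpsi_1}\leq C\int|\nabla f|^2e^{2s\wpsi_1}$; adding this to the zeroth-order estimate is exactly the assertion.

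The main obstacle I anticipate is the gradient part: a first-order operator $\bp\cdot\nabla$ directly controls only $\wv$, not $\nabla\wv$, so the bound cannot come from a single integration by parts. The differentiate-and-reapply bootstrap closes this gap, and it works precisely because the hypothesis supplies $\nabla\wv=0$ on $\pp\Omega$ (placing each $\pp_k\wv$ in $H^1_0$) and because the commutator $(\pp_k\bp)\cdot\nabla\wv$ is genuinely lower order and hence absorbable for large $s$. The only quantitative care needed is tracking the powers of $s$ and $\la$ in the cross-term remainder and verifying that $\wva_1$ stays bounded away from zero, so that all remainder terms are absorbable.
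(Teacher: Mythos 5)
Your proposal is correct and follows essentially the same route as the paper's proof: the same conjugation $\ww=\wv e^{s\wpsi_1}$, the same integration by parts on the cross term using $2\ww\,\bp\cdot\nabla\ww=\bp\cdot\nabla(\ww^2)$ with the favorable $s\la^2\wva_1(\bp\cdot\nabla d_1)^2$ term and $O(s\la)$ remainder, and the same differentiate-and-reapply bootstrap $\bp\cdot\nabla(\pp_k\wv)=\pp_k(\bp\cdot\nabla\wv)-(\pp_k\bp)\cdot\nabla\wv$ with absorption of the commutator for large $s$. The only cosmetic difference is your explicit use of $(\bp\cdot\nabla d_1)^2\ge m_1^2$ and the lower bound on $\wva_1$, which the paper leaves implicit.
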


\begin{lem}
\label{lem:ce1st2}
We assume that there exists $m_2 >0$ such that 
$|\bp (x)\cdot \nabla d_2(x)|\geq m_2$, $x\in \overline{\Omega\setminus \omega}$. 
Then there exists $\la_0>0$ such that for any $\la>\la_0$, 
we can choose $s_0(\la)>0$ for which there exists $C=C(s_0,\la)>0$ such that 
\begin{equation*}
\int_\Omega
(s\wva_2)^{2}
\left(
|\nabla \wv|^2
+
|\wv|^2
\right)
e^{2s \wpsi_2}\,dx
\leq
C\int_\Omega \left(  |\nabla (\bp \cdot \nabla \wv)|^2 + |\bp \cdot \nabla  \wv|^2\right) e^{2s \wpsi_2}\,dx,
\end{equation*}
for all $s> s_0$ and all $\wv \in H^2(\Omega)$ satisfying 
$|\wv(x)|=|\nabla \wv(x)|=0$, $x\in \pp\Omega$ 
and $\wv(x)=0$, $x\in\omega$. 
\end{lem}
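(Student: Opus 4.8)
The plan is to prove this by the standard conjugation method for a first-order (transport-type) Carleman estimate, first obtaining the zeroth-order estimate and then recovering the gradient by differentiation. The decisive structural feature, in contrast with Lemma \ref{lem:ce1st1}, is that the lower bound on $\bp\cdot\nabla d_2$ holds only on $\overline{\Omega\setminus\omega}$, so I must use the hypothesis $\wv=0$ in $\omega$ to localize every integral to the non-degenerate region.

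First I would set $P\wv:=\bp\cdot\nabla\wv$ and conjugate with $w:=e^{s\wpsi_2}\wv$. Since $\nabla\wpsi_2=\la\wva_2\nabla d_2$, a direct computation gives
\[
e^{s\wpsi_2}P\wv=\bp\cdot\nabla w - s\la\wva_2(\bp\cdot\nabla d_2)\,w.
\]
Then I would square and integrate over $\Omega$. The cross term $-2s\la\int_\Omega\wva_2(\bp\cdot\nabla d_2)(\bp\cdot\nabla w)\,w\,dx$ is integrated by parts: the boundary contribution vanishes because $\wv=0$ on $\pp\Omega$ forces $w=0$ there, while the divergence $\dd[\wva_2(\bp\cdot\nabla d_2)\bp]$ produces the positive leading term $\la\wva_2(\bp\cdot\nabla d_2)^2$ together with terms bounded by $C\wva_2$. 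Because $\wv=0$ in $\omega$ gives $w\equiv0$ in $\omega$, every integral may be restricted to $\overline{\Omega\setminus\omega}$, where $(\bp\cdot\nabla d_2)^2\geq m_2^2$; thus the $s^2$-order term is bounded below by $s^2\la^2m_2^2\int_\Omega\wva_2^2w^2\,dx$. Since $d_2\geq0$ on $\overline{\Omega}$ gives $\wva_2\geq 1/\ell(t_0)>0$, the remaining $O(s)$ terms are absorbed for $s$ large, which yields the zeroth-order estimate
\[
\int_\Omega(s\wva_2)^2|\wv|^2e^{2s\wpsi_2}\,dx\leq C\int_\Omega|\bp\cdot\nabla\wv|^2e^{2s\wpsi_2}\,dx.
\]

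To recover the gradient, I would differentiate the first-order relation: with $\wv_j:=\pp_j\wv$ one has $\bp\cdot\nabla\wv_j=\pp_j(P\wv)-(\pp_j\bp)\cdot\nabla\wv$. The hypotheses $\nabla\wv=0$ on $\pp\Omega$ and $\wv=0$ in $\omega$ guarantee $\wv_j=0$ on $\pp\Omega$ and in $\omega$, so the zeroth-order estimate above applies to each $\wv_j$. Summing over $j$ and using $|\pp_j(P\wv)-(\pp_j\bp)\cdot\nabla\wv|^2\leq 2|\pp_j(P\wv)|^2+C|\nabla\wv|^2$ gives
\[
\int_\Omega(s\wva_2)^2|\nabla\wv|^2e^{2s\wpsi_2}\,dx\leq C\int_\Omega|\nabla(\bp\cdot\nabla\wv)|^2e^{2s\wpsi_2}\,dx+C\int_\Omega|\nabla\wv|^2e^{2s\wpsi_2}\,dx.
\]
Since $(s\wva_2)^2\geq (s/\ell(t_0))^2$, the last term is absorbed into the left-hand side for $s$ large, and adding the zeroth-order estimate produces exactly the claimed inequality.

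The hard part will be the degeneracy of $\bp\cdot\nabla d_2$: unlike in Lemma \ref{lem:ce1st1}, the lower bound $|\bp\cdot\nabla d_2|\geq m_2$ holds only on $\overline{\Omega\setminus\omega}$, so the positivity of the dominant term is lost inside $\omega$. The resolution is that the assumption $\wv=0$ in $\omega$, which is preserved under the differentiation step, makes $w$ and each $\wv_j$ vanish there, so all integrals localize to the non-degenerate region. The remaining care is bookkeeping of the $s$- and $\la$-powers, choosing $\la$ large first and then $s$ large, so that every lower-order term is absorbed.
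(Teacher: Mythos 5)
Your proposal is correct and follows essentially the same route as the paper, which proves Lemma \ref{lem:ce1st2} by repeating the conjugation argument of Lemma \ref{lem:ce1st1} (setting $\ww=\wv e^{s\wpsi_2}$, integrating the cross term by parts, absorbing the $O(s\la^2\wva_2+s\la\wva_2)$ terms, then applying the zeroth-order bound to each $\pp_k\wv$). The only modification needed for the interior case --- using $\wv=0$ in $\omega$, preserved under differentiation, to localize all integrals to $\overline{\Omega\setminus\omega}$ where $|\bp\cdot\nabla d_2|\geq m_2$ --- is left implicit in the paper and is exactly what you supply.
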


\begin{proof}[Proof of Lemma \ref{lem:ce1st1}]
Setting $\ww=\wv e^{s\wpsi_1}$ in $\Omega$, we have
\begin{equation*}
e^{s\wpsi_1}(\bp\cdot \nabla \wv)
=\bp\cdot \nabla \ww -s\la\wva_1 (\bp\cdot \nabla d_1)\ww
\quad \text{in $\Omega$}. 
\end{equation*}
Taking the weighted $L^2$ norm, by integrating by parts we obtain 
\begin{align*}
&
\int_\Omega 
|\bp\cdot \nabla \wv|^2 e^{2s\wpsi_1}
\,dx \\
&=
\int_\Omega 
|\bp\cdot \nabla \ww|^2 
\,dx
+
\int_\Omega 
s^2\la^2 \wva_1^2
(\bp\cdot \nabla d_1)^2
|\ww|^2 
\,dx \\
&\quad
-2
\int_\Omega 
s\la\wva_1 (\bp\cdot \nabla d_1) \sum_{j=1}^n p_j \ww \pp_j \ww 
\,dx \\
&\geq
\int_\Omega 
s^2\la^2 \wva_1^2
(\bp\cdot \nabla d_1)^2
|\ww|^2 
\,dx 
-
\int_\Omega 
s\la\wva_1 (\bp\cdot \nabla d_1) \sum_{j=1}^n p_j \pp_j (\ww)^2 
\,dx \\
&=
\int_\Omega 
s^2\la^2 \wva_1^2
(\bp\cdot \nabla d_1)^2
|\ww|^2 
\,dx 
+
\int_\Omega 
s\la^2 \wva_1
(\bp\cdot \nabla d_1)^2
|\ww|^2 
\,dx
\\
&\quad
+
\int_\Omega 
s\la\wva_1 
\left[
(\bp\cdot \nabla d_1)
(\dd \bp)
+
\bp\cdot \nabla (\bp\cdot \nabla d_1)
\right]
 |\ww|^2 
\,dx
\end{align*}
Hence we have
\begin{equation*}
\int_\Omega 
s^2\la^2 \wva_1^2
|\ww|^2 
\,dx 
\leq
C\int_\Omega 
|\bp\cdot \nabla \wv|^2 e^{2s\wpsi_1}
\,dx 
+C
\int_\Omega 
\left(s\la^2 \wva_1 + s\la \wva_1 \right)
|\ww|^2 
\,dx .
\end{equation*}
Taking sufficiently large $s>0$, 
we may absorb the second term on the right-hand side of the above inequality into the left-hand side and we see that
\begin{equation*}
\int_\Omega 
s^2\la^2 \wva_1^2
|\ww|^2 
\,dx 
\leq
C\int_\Omega 
|\bp\cdot \nabla \wv|^2 e^{2s\wpsi_1}
\,dx ,
\end{equation*}
that is, 
\begin{equation}
\label{eq:ce1st01}
\int_\Omega 
s^2\la^2 \wva_1^2
|\wv|^2 e^{2s\wpsi_1}
\,dx 
\leq
C\int_\Omega 
|\bp\cdot \nabla \wv|^2 e^{2s\wpsi_1}
\,dx .
\end{equation}

Set $\wv_k =\pp_k \wv$ in $\Omega$ for $k=1,2\ldots, n$. 
We consider 
\begin{equation*}
\bp\cdot \nabla \wv_k
=
\pp_k (\bp\cdot \nabla \wv)
-(\pp_k \bp) \cdot \nabla \wv
\end{equation*}
Applying the estimate \eqref{eq:ce1st01} to the above equation, 
we may obtain
\begin{align*}
\int_\Omega 
s^2\la^2 \wva_1^2
|\wv_k|^2 e^{2s\wpsi_1}
\,dx 
&\leq
C\int_\Omega 
|\bp\cdot \nabla \wv_k|^2 e^{2s\wpsi_1}
\,dx \\
&\leq
C\int_\Omega 
|\pp_k (\bp\cdot \nabla \wv)|^2 e^{2s\wpsi_1}
\,dx 
+
C\int_\Omega 
|(\pp_k \bp) \cdot \nabla \wv|^2 e^{2s\wpsi_1}
\,dx .
\end{align*}
Hence we have
\begin{align*}
\int_\Omega 
s^2\la^2 \wva_1^2
|\nabla \wv|^2 e^{2s\wpsi_1}
\,dx 
&\leq
C\int_\Omega 
|\nabla (\bp\cdot \nabla \wv)|^2 e^{2s\wpsi_1}
\,dx 
+
C\int_\Omega 
|\nabla \wv|^2 e^{2s\wpsi_1}
\,dx .
\end{align*}
Choosing sufficiently large $s>0$, 
we can absorb the second term on the right-hand side of the above inequality into the left-hand side and we may get 
\begin{equation*}
\int_\Omega 
s^2\la^2 \wva_1^2
|\nabla \wv|^2 e^{2s\wpsi_1}
\,dx 
\leq
C\int_\Omega 
|\nabla (\bp\cdot \nabla \wv)|^2 e^{2s\wpsi_1}
\,dx.
\end{equation*}
Combining this with \eqref{eq:ce1st01}, 
we obtain the Carleman estimate of Lemma \ref{lem:ce1st1}. 
Thus we conclude the Lemma \ref{lem:ce1st1}. 
\end{proof}

\begin{proof}[Proof of Lemma \ref{lem:ce1st2}] 
By an argument similar to that used in the proof of Lemma \ref{lem:ce1st1}, we may obtain Lemma \ref{lem:ce1st2}. 
\end{proof}

\begin{rmk}
In one spatial dimension, the assumption $|\nabla \wv|=0$ on $\pp\Omega$ is not necessary in Lemma \ref{lem:ce1st1} and Lemma \ref{lem:ce1st2}. In this case, we have the following Carleman estimate by integration by parts.
\begin{equation*}
\int_\Omega
(s\wva_k)^{2}
\left(
|\pp_1 \wv|^2
+
|\wv|^2
\right)
e^{2s \wpsi_k}\,dx
\leq
C\int_\Omega  |p_1\pp_1  \wv|^2 e^{2s \wpsi_k}\,dx,
\end{equation*}
for $k=1,2$. 
\end{rmk}

Now we are ready to prove Lemma \ref{lem:ce3rd1} and Lemma \ref{lem:ce3rd2}. 

\begin{proof}[Proof of Lemma \ref{lem:ce3rd1}]
Set $\wy=\triangle \wv$ in $\Omega$. 
By the assumptions $|\triangle \wv(x)|=|\nabla \triangle \wv(x)|=0$, $x\in \pp\Omega$, we see that 
$|\wy(x)|=|\nabla \wy(x)|=0$, $x\in \pp\Omega$. 
By Lemma \ref{lem:ce1st1}, we obtain 
\begin{equation*}
\int_\Omega
(s\wva_1)^{2}
\left(
|\nabla \wy|^2
+
|\wy|^2
\right)
e^{2s \wpsi_1}\,dx
\leq
C\int_\Omega \left(  |\nabla (\bp \cdot \nabla \wy)|^2 + |\bp \cdot \nabla  \wy|^2\right) e^{2s \wpsi_1}\,dx,
\end{equation*}
that is,
\begin{align}
\label{eq:ce3rd01}
&\int_\Omega
(s\wva_1)^{2}
\left(
|\nabla \triangle \wv|^2
+
|\triangle \wv|^2
\right)
e^{2s \wpsi_1}\,dx \\
&\leq
C\int_\Omega \left(  |\nabla (\bp \cdot \nabla \triangle \wv)|^2 + |\bp \cdot \nabla  \triangle \wv|^2\right) e^{2s \wpsi_1}\,dx. \nonumber 
\end{align}
Next we use the Carleman estimate for elliptic equations to estimate 
the left-hand side of the above inequality. 
By Lemma \ref{lem:celemeb} with $p=2$, 
we have
\begin{align}
\label{eq:ce3rd02}
&
\int_\Omega
\left[
s\wva_1
\sum_{i,j=1}^n|\pp_i \pp_j \wv|^2
+
(s\wva_1)^{3}
|\nabla \wv|^2
+
(s\wva_1)^{5}
 |\wv|^2
\right]
e^{2s \wpsi_1}\,dx
\\
&
\leq
C\int_\Omega (s\wva_1)^{2} |\triangle \wv|^2 e^{2s \wpsi_1}\,dx
. \nonumber 
\end{align}
Setting $\wv_k =\pp_k \wv$ in $\Omega$ for $k=1,2\ldots, n$ 
and using Lemma \ref{lem:celemeb} again, 
we see that 
\begin{align*}
&
\int_\Omega
\left[
s\wva_1
\sum_{i,j=1}^n|\pp_i \pp_j \wv_k|^2
+
(s\wva_1)^{3}
|\nabla \wv_k|^2
+
(s\wva_1)^{5}
 |\wv_k|^2
\right]
e^{2s \wpsi_1}\,dx
\\
&
\leq
C\int_\Omega (s\wva_1)^{2} |\triangle \wv_k|^2 e^{2s \wpsi_1}\,dx,
\end{align*}
that is, 
\begin{align}
\label{eq:ce3rd03}
&
\int_\Omega
\left[
s\wva_1
\sum_{i,j,k=1}^n|\pp_i \pp_j \pp_k \wv|^2
+
(s\wva_1)^{3}
\sum_{i,j=1}^n
|\pp_i \pp_j  \wv|^2
+
(s\wva_1)^{5}
 |\nabla \wv|^2
\right]
e^{2s \wpsi_1}\,dx
\\
&
\leq
C\int_\Omega (s\wva_1)^{2} |\nabla \triangle \wv|^2 e^{2s \wpsi_1}\,dx. 
\nonumber 
\end{align}
Summing up the inequalities \eqref{eq:ce3rd01} --\eqref{eq:ce3rd03}, 
we may obtain the Carleman estimate of Lemma \ref{lem:ce3rd1}
\end{proof}

\begin{proof}[Proof of Lemma \ref{lem:ce3rd2}] 
Using Lemma \ref{lem:ce1st2} and Lemma \ref{lem:celemei} 
instead of Lemma \ref{lem:ce1st1} and Lemma \ref{lem:celemeb}, 
we may prove Lemma \ref{lem:ce3rd2} in the same way as Lemma \ref{lem:ce3rd1}. 
\end{proof}

\section{Proof of stability estimates}
\label{proof}

Hereafter we let $C$ denote a generic constant which is independent of $s,x,t$ and let
$C(s)$ denote a generic constant which is independent of $x,t$ but depends on $s$.

\subsection{Stability for the zeroth-order coefficient}

\begin{proof}[Proof of Theorem \ref{thm:ispb}]

By using Lemma \ref{lem:halftoone} for \eqref{eq:eq01}, we obtain
\begin{equation}
\label{eq:pr01}
\rho_2^2\pp_t u (x,t)
-(\rho_1\pp_t -L)^2 u(x,t)
=F(x,t),\quad (x,t)\in Q,
\end{equation}
where we introduced $F(x,t)$ as
\begin{align}
\label{eq:pr02}
F(x,t)
&=\left[ \rho_2\pp_t^\frac12- (\rho_1\pp_t - L) \right] \left(f(x)R(x,t)\right) +\rho_2 f(x)\frac{R(x,0)}{\sqrt{\pi t}}\\
&=
R(x,t)
\sum_{i,j=1}^n \pp_i(a_{ij}(x)\pp_j f(x)) \nonumber\\
&\quad
+\sum_{j=1}^n \left( 2 \sum_{i=1}^n a_{ij}(x)\pp_i R(x,t) - b_j(x) R(x,t)\right) \pp_j f(x) \nonumber\\
&\quad
+\Biggl[
\rho_2\pp_t^{\frac12}R(x,t)-\rho_1\pp_t R(x,t)
+\sum_{i,j=1}^n \pp_i(a_{ij}(x)\pp_j R(x,t)) \nonumber \\
&\qquad\quad
-\sum_{j=1}^n b_j(x) \pp_j R(x,t)-c(x) R(x,t)
+
\frac{\rho_2 R(x,0)}{\sqrt{\pi t}}
\Biggr]f(x),\  (x,t) \in Q
\nonumber .
\end{align}

Let us set $y=\pp_t u$, $z=\pp_t^2 u$ in $Q$. By differentiating 
\eqref{eq:pr01} with respect to $t$, we have
\begin{align}
\label{eq:pr06}
&
\rho_2^2\pp_t y (x,t)
-(\rho_1\pp_t -L)^2 y(x,t)
=\pp_t F(x,t),		& (x,t)\in Q,\\
\label{eq:pr07}
&
\rho_2^2\pp_t z (x,t)
-(\rho_1\pp_t -L)^2 z(x,t)
=\pp_t^2 F(x,t),	& (x,t)\in Q.
\end{align}
Since $u(x,t)=0$, $(x,t)\in\pp\Omega\times(0,T)$, we see that
\begin{equation*}
y(x,t)=z(x,t)=0,\quad (x,t)\in\pp\Omega\times(0,T).
\end{equation*}

To use the Carleman estimate in $Q_\delta$, we introduce the weight functions. 
Set
\begin{equation*}
\varphi_{\delta,1}(x,t)=\frac{e^{\la d_1(x)}}{\ell_\delta(t)}, \quad 
\psi_{\delta,1}(x,t)=\frac{e^{\la d_1(x)}-e^{2\la \|d_1\|_{C(\overline{\Omega})}}}{\ell_\delta(t)}, \quad (x,t) \in Q_\delta,
\end{equation*}
where $\ell_\delta(t)=(t-t_0+\delta)(t_0+\delta-t)$. 
Fixing $\la>0$ and applying Theorem \ref{thm:ce0b} ($p=0$) to \eqref{eq:pr06} and \eqref{eq:pr07} in $Q_\delta$, we have
\begin{align}
\label{eq:pr08}
&
\int_{Q_\delta}
\Biggl[
(s\va_{\delta,1})^3
\left( 
|\pp_t y|^2
+
|\pp_t z|^2
+\sum_{i,j=1}^n|\pp_i \pp_j y|^2
+\sum_{i,j=1}^n|\pp_i \pp_j z|^2
\right)  \\
&\qquad
+
(s\va_{\delta,1})^5
\left(
 |\nabla y|^2
 + |\nabla z|^2
\right)
+
(s\va_{\delta,1})^7
\left(
|y|^2
+
|z|^2
\right)
\Biggr]
e^{2s\psi_{\delta,1}}\,dxdt
\nonumber\\
&
\leq
C
\int_{Q_\delta} s\va_{\delta,1} \left(
|\pp_t F|^2
+
|\pp_t^2 F|^2
\right) e^{2s\psi_{\delta,1}}\,dxdt
+C \widetilde{B},
\nonumber
\end{align}
where
\begin{align*}
\widetilde{B}=&  \int_{\Sigma_\delta}
\Biggl[
s\va_{\delta,1}
\left(
|\nabla \pp_t y|^2 
+
|\nabla \pp_t z|^2 
\right)\\
&\qquad
+
(s\va_{\delta,1})^2
\left(
|\nabla \pp_t^{\frac12} y|^2
+
|\nabla \pp_t^{\frac12} z|^2
\right) 
+ 
(s\va_{\delta,1})^5
\left(
|\nabla y|^2
+
|\nabla z|^2
\right)
\Biggr]
e^{2s\psi_{\delta,1}}
\,dSdt.
\end{align*}
We can estimate $\widetilde{B}$ by $B^2$. 
We note that 
$\pp_t^{\frac12}\pp_t^m=\pp_t^{m+\frac12}$, $m\in\mathbb{N}$. 
Since there exist constants $C_k(s)>0$ such that  $\va_{\delta,1}^ke^{2s\psi_{\delta,1}} \leq C_k(s)$ on $\Sigma_\delta$ for $k=0,1,2,\ldots$, we have
\begin{align*}
\widetilde{B}&\leq
C
\int_{\Sigma_\delta}
\Biggl[
s\va_{\delta,1}
|\nabla \pp_t^3 u|^2 
+
(s\va_{\delta,1})^2
\left(
|\nabla \pp_t^{\frac12} \pp_t u|^2
+
|\nabla \pp_t^{\frac12} \pp_t^2 u|^2
\right) \\
&\qquad\qquad
+ 
(s\va_{\delta,1})^5
\left(
|\nabla \pp_t u|^2
+
|\nabla \pp_t^2 u|^2
\right)
\Biggr]
e^{2s\psi_{\delta,1}}
\,dSdt \\
&\leq C(s) B^2.
\end{align*}
Note that
\begin{equation*}
\int_{Q_\delta} s\va_{\delta,1} \left(
|\pp_t F|^2
+
|\pp_t^2 F|^2
\right) e^{2s\psi_{\delta,1}}\,dxdt
\leq 
C
\int_{Q_\delta} 
s\va_{\delta,1} \sum_{|\alpha| \leq 2} |\pp_x^\alpha f|^2e^{2s\psi_{\delta,1}}\,dxdt. 
\end{equation*}
This together with \eqref{eq:pr08} gives
\begin{align}
\label{eq:pr09}
&
\int_{Q_\delta}
\Biggl[
(s\va_{\delta,1})^3
\left( 
|\pp_t y|^2
+
|\pp_t z|^2
+\sum_{i,j=1}^n|\pp_i \pp_j y|^2
+\sum_{i,j=1}^n|\pp_i \pp_j z|^2
\right) \\
&\qquad
+
(s\va_{\delta,1})^5
\left(
 |\nabla y|^2
 + |\nabla z|^2
\right)
+
(s\va_{\delta,1})^7
\left(
|y|^2
+
|z|^2
\right)
\Biggr]
e^{2s\psi_{\delta,1}}\,dxdt
\nonumber\\
&
\leq
C
\int_{Q_\delta} 
s\va_{\delta,1} \sum_{|\alpha |\leq 2} |\pp_x^\alpha f|^2e^{2s\psi_{\delta,1}}\,dxdt
+C(s)B^2.
\nonumber
\end{align}

Let us expand the left-hand side of \eqref{eq:pr01}. We have
\begin{equation*}
\rho_2^2\pp_t u (x,t)
-\rho_1^2\pp_t^2 u(x,t) +2\rho_1\pp_t L u(x,t) -L^2 u(x,t)
=F(x,t),
\quad
(x,t)\in Q.
\end{equation*}
In particular at $t=t_0$, we have
\begin{equation}
\label{eq:pr04}
\rho_2^2\pp_t u (x,t_0)
-\rho_1^2\pp_t^2 u(x,t_0) +2\rho_1\pp_t L u(x,t_0) -L^2 u(x,t_0)
=F(x,t_0),\quad x\in \Omega.
\end{equation}
Taking the weighted $L^2$ norm of \eqref{eq:pr04} in $\Omega$, we obtain
\begin{align}
\label{eq:pr05}
&
\int_{\Omega} \va_{\delta,1}(x,t_0) |F(x,t_0)|^2e^{2s\psi_{\delta,1}(x,t_0)}\,dx
\\
&\leq
C \sum_{k=1}^3 J_k+
C
\int_{\Omega} \va_{\delta,1}(x,t_0) \sum_{|\alpha|\leq 4} |\pp_x^\alpha u(x,t_0)|^2e^{2s\psi_{\delta,1}(x,t_0)}\,dx,
\nonumber
\end{align}
where
\begin{align}
J_1&=
\int_{\Omega} \va_{\delta,1}(x,t_0) |\pp_t u(x,t_0)|^2e^{2s\psi_{\delta,1}(x,t_0)}\,dx,
\nonumber \\
J_2&=
\int_{\Omega} \va_{\delta,1}(x,t_0) |\pp_t^2 u(x,t_0)|^2e^{2s\psi_{\delta,1}(x,t_0)}\,dx,
\nonumber \\
J_3&=
\int_{\Omega} \va_{\delta,1}(x,t_0) |\pp_t L u(x,t_0)|^2e^{2s\psi_{\delta,1}(x,t_0)}\,dx.
\nonumber
\end{align}
Let us estimate $J_1,J_2,J_3$. We assume that $s>1$ is large enough to satisfy $s\va_{\delta,1}>1$ in $Q$. 
We note that $\pp_t \psi_{\delta,1}(x,t)=(e^{2\la (\| d_1\|_{C(\overline{\Omega})} - d_1(x))} -e^{-\la d_1(x)})(T-2t)\va_{\delta,1}^2(x,t)$ for $(x,t) \in Q$. 
\begin{align*}
J_1
&=
\int_{t_0-\delta}^{t_0}
\int_{\Omega}
\pp_t \left( \va_{\delta,1}|y|^2 e^{2s\psi_{\delta,1}}\right)
\,dxdt  \\
&\leq
C
\int_{t_0-\delta}^{t_0}
\int_{\Omega}
\left[ \va_{\delta,1}^2|y|^2+ 
\va_{\delta,1}|\pp_t y||y| + 
s\va_{\delta,1}^3|y|^2 \right]e^{2s\psi_{\delta,1}}
\,dxdt \\
&\leq
C
\int_{Q_\delta}
s\va_{\delta,1}^3
\left( 
|y|^2+|z|^2
\right)
e^{2s\psi_{\delta,1}}
\,dxdt .
\end{align*}
Combining this with \eqref{eq:pr09}, we may estimate the right-hand side of the above inequality and we obtain
\begin{equation}
\label{eq:pr13}
J_1
\leq
\frac{C}{s^5}
\int_{Q_\delta} 
\va_{\delta,1} \sum_{|\alpha |\leq 2} |\pp_x^\alpha f|^2e^{2s\psi_{\delta,1}}\,dxdt
+C(s)B^2.
\end{equation}
Similarly, we obtain
\begin{align*}
J_2
&=
\int_{t_0-\delta}^{t_0}
\int_{\Omega}
\pp_t \left( \va_{\delta,1}|\pp_t y|^2 e^{2s\psi_{\delta,1}}\right)
\,dxdt  \\
&\leq
C
\int_{t_0-\delta}^{t_0}
\int_{\Omega}
\left[ \va_{\delta,1}^2|\pp_ty|^2+ 
\va_{\delta,1}|\pp_t^2 y||\pp_ty| + 
s\va_{\delta,1}^3|\pp_t y|^2 \right]e^{2s\psi_{\delta,1}}
\,dxdt \\
&\leq
C
\int_{Q_\delta}
s\va_{\delta,1}^3
\left( 
|\pp_t y|^2+|\pp_t z|^2
\right)
e^{2s\psi_{\delta,1}}
\,dxdt.
\end{align*}
Putting this together with \eqref{eq:pr09}, we see that
\begin{equation}
\label{eq:pr14}
J_2
\leq
\frac{C}{s}
\int_{Q_\delta} 
\va_{\delta,1} \sum_{|\alpha |\leq 2} |\pp_x^\alpha f|^2e^{2s\psi_{\delta,1}}\,dxdt
+C(s) B^2.
\end{equation}
Moreover we have
\begin{align*}
J_3
&=
\int_{t_0-\delta}^{t_0}
\int_{\Omega}
\pp_t \left( \va_{\delta,1}|L y|^2 e^{2s\psi_{\delta,1}}\right)
\,dxdt  \\
&\leq
C
\int_{t_0-\delta}^{t_0}
\int_{\Omega}
\left[ \va_{\delta,1}^2|L y|^2+ 
\va_{\delta,1}|\pp_t L y||L y| + 
s\va_{\delta,1}^3|L y|^2 \right]e^{2s\psi_{\delta,1}}
\,dxdt \\
&\leq
C
\int_{Q_\delta}
s\va_{\delta,1}^3
\left( 
|L y|^2+|L z|^2
\right)
e^{2s\psi_{\delta,1}}
\,dxdt  \\
&\leq
C
\int_{Q_\delta}
s\va_{\delta,1}^3
\left( 
\sum_{|\alpha| \leq 2}
|\pp_x^\alpha y|^2
+
\sum_{|\alpha|\leq 2}
|\pp_x^\alpha z|^2
\right)
e^{2s\psi_{\delta,1}}
\,dxdt  
\end{align*}
This together with \eqref{eq:pr09} gives
\begin{equation}
\label{eq:pr15}
J_3
\leq
\frac{C}{s}
\int_{Q_\delta} 
\va_{\delta,1} \sum_{|\alpha |\leq 2} |\pp_x^\alpha f|^2e^{2s\psi_{\delta,1}}\,dxdt
+C(s) B^2.
\end{equation}

By \eqref{eq:pr05} through \eqref{eq:pr15}, we have
\begin{align}
\label{eq:pr16}
&\int_{\Omega} \va_{\delta,1}(x,t_0) |F(x,t_0)|^2e^{2s\psi_{\delta,1}(x,t_0)}\,dx \\
&
\leq
\frac{C}{s}
\int_{Q_\delta} 
\va_{\delta,1} \sum_{|\alpha |\leq 2} |\pp_x^\alpha f|^2e^{2s\psi_{\delta,1}}\,dxdt
\nonumber \\
&\quad
+
C
\int_{\Omega} \va_{\delta,1}(x,t_0) \sum_{|\alpha|\leq 4} |\pp_x^\alpha u(x,t_0)|^2e^{2s\psi_{\delta,1}(x,t_0)}\,dx +C(s) B^2.
\nonumber
\end{align}
We will estimate the left-hand side of the inequality \eqref{eq:pr16} from 
below using the Carleman estimate for the elliptic equation stated 
Lemma \ref{lem:celemeb} ($p=1$). By \eqref{eq:pr02} at $t=t_0$, we have
\begin{align}
\label{eq:pr17}
&
\sum_{i,j=1}^n \pp_i(a_{ij}(x)\pp_j \wf(x)) \\
&\quad
+\frac{1}{R(x,t_0)}\sum_{j=1}^n \left( 2 \sum_{i=1}^n a_{ij}(x)\pp_i R(x,t_0) - b_j(x) R(x,t_0) \right) \pp_j \wf(x) \nonumber\\
&\quad
+\frac{1}{R(x,t_0)}\Biggl[
\rho_2\pp_t^{\frac12}R(x,t_0)-\rho_1\pp_t R(x,t_0)
+\sum_{i,j=1}^n \pp_i(a_{ij}(x)\pp_j R(x,t_0)) \nonumber \\
&\qquad\qquad\qquad
-\sum_{j=1}^n b_j(x) \pp_j R(x,t_0)-c(x) R(x,t_0)
+
\frac{\rho_2 R(x,0)}{\sqrt{\pi t_0}}
\Biggr]\wf(x) \nonumber \\
&=
\frac{F(x,t_0)}{R(x,t_0)}, \quad x\in \Omega.
\nonumber
\end{align}
We note that $f(x)=0$, $x\in \pp\Omega$ and $\nabla f(x)=0$, $x\in \gamma$ 
are assumed. Applying the Lemma \ref{lem:celemeb} to \eqref{eq:pr17} in 
$\Omega$, we obtain
\begin{align}
\label{eq:pr18}
&
\frac1{s}
\int_{\Omega}
\va_{\delta,1}(x,t_0) \sum_{|\alpha|\leq 2}  | \pp_x^\alpha f(x) |^2 e^{2s\psi_{\delta,1}(x,t_0)}
\,dx \\
&\leq
\frac{C}{s}
\int_{\Omega}
 \sum_{|\alpha|\leq 2}  | \pp_x^\alpha f(x) |^2 e^{2s\psi_{\delta,1}(x,t_0)}
\,dx \nonumber \\
&\leq
\frac{C}{s}
\int_{\Omega}
\Biggl(
\sum_{i,j=1}^n 
|\pp_i \pp_j f(x)|^2
\nonumber \\
&\qquad \qquad 
+ 
(s\va_{\delta,1}(x,t_0))^2
|\nabla f(x)|^2
+
(s\va_{\delta,1}(x,t_0))^4
|f(x)|^2
\Biggr)
e^{2s\psi_{\delta,1}(x,t_0)}\,dx  \nonumber \\
&\leq 
C
\int_{\Omega}
\va_{\delta,1}(x,t_0)
\left|
\frac{F(x,t_0)}{R(x,t_0)}
\right|^2
e^{2s\psi (x,t_0)}\,dx
\nonumber \\
&\leq 
C
\int_{\Omega}
\va_{\delta,1}(x,t_0)
\left|
F(x,t_0)
\right|^2
e^{2s\psi (x,t_0)}\,dx. 
\nonumber 
\end{align}

By \eqref{eq:pr16} and \eqref{eq:pr18}, we obtain
\begin{align}
\label{eq:pr19}
&
\frac1{s}
\int_{\Omega}
\va_{\delta,1}(x,t_0) \sum_{|\alpha|\leq 2}  | \pp_x^\alpha f(x) |^2 e^{2s\psi_{\delta,1}(x,t_0)}
\,dx \\
&
\leq
\frac{C}{s}
\int_{Q_\delta} 
\va_{\delta,1}(x,t_0) \sum_{|\alpha |\leq 2} |\pp_x^\alpha f(x)|^2e^{2s\psi_{\delta,1}(x,t_0)}\,dxdt
\nonumber \\
&\quad
+
C
\int_{\Omega} \va_{\delta,1}(x,t_0) \sum_{|\alpha|\leq 4} |\pp_x^\alpha u(x,t_0)|^2e^{2s\psi_{\delta,1}(x,t_0)}\,dx +C(s) B^2.
\nonumber
\end{align}
Let us estimate the first integral term on the right-hand side of \eqref{eq:pr19}.
\begin{equation*}
\int_{Q_\delta} 
\va_{\delta,1} \sum_{|\alpha |\leq 2} |\pp_x^\alpha f|^2e^{2s\psi_{\delta,1}}\,dxdt
\leq
\int_{\Omega} \va_{\delta,1}(x,t_0) \sum_{|\alpha|\leq 2} | \pp_x^\alpha f |^2 e^{2s\psi_{\delta,1}(x,t_0)}  h_s(x)\,dx,
\end{equation*}
where
\begin{equation*}
h_s(x)=
\frac{1}{\va_{\delta,1}(x,t_0)}\int_{t_0-\delta}^{t_0+\delta} \va_{\delta,1}e^{-2s(\psi_{\delta,1}(x,t_0)-\psi_{\delta,1}(x,t))}\,dt.
\end{equation*}
Since $\psi_{\delta,1}(x,t_0)-\psi_{\delta,1}(x,t) \geq 0$, 
$(x,t)\in Q_\delta$, $h_s$ converges pointwise to $0$ in $\Omega$ as 
$s\to\infty$ by Lebesgue's dominated convergence theorem. Moreover by 
Dini's theorem, we see that $h_s$ converges uniformly to $0$ in $\Omega$ as 
$s\to \infty$. Hence, taking sufficiently large $s>0$, we can absorb the 
first term on the right-hand side of \eqref{eq:pr19} 
into the left-hand side and obtain
\begin{align}
\label{eq:pr20}
&\frac1{s}
\int_{\Omega}
\va_{\delta,1}(x,t_0) \sum_{|\alpha|\leq 2}  | \pp_x^\alpha f(x) |^2 e^{2s\psi_{\delta,1}(x,t_0)}
\,dx \\
&
\leq
C
\int_{\Omega} \va_{\delta,1}(x,t_0) \sum_{|\alpha|\leq 4} |\pp_x^\alpha u(x,t_0)|^2e^{2s\psi_{\delta,1}(x,t_0)}\,dx +C(s) B^2.
\nonumber
\end{align}
Fix $s>0$. Noting that $ \va_{\delta,1}(\cdot,t_0)e^{2s\psi_{\delta,1}(\cdot,t_0)}$ has its upper and lower bound in $\overline{\Omega}$, we see that 
\begin{equation*}
\|f \|_{H^2(\Omega)} 
\leq C \| u (\cdot, t_0) \|_{H^4(\Omega)} +CB.
\end{equation*}
Thus we obtain the stability estimate.  
\end{proof}
\begin{proof}[Proof of Theorem \ref{thm:ispi}] 
We may prove Theorem \ref{thm:ispi} by an argument similar to that used in the proof of Theorem \ref{thm:ispb}. In the proof, 
Theorem \ref{thm:ce0i} and Lemma \ref{lem:celemei} are used 
instead of Theorem \ref{thm:ce0b} and Lemma \ref{lem:celemeb}.
\end{proof}

\subsection{Stability for the diffusion coefficient}

Next we prove Theorem \ref{thm:df1} and Theorem \ref{thm:df2}. 
The proofs is very similar to the proofs of Theorem \ref{thm:ispb} and Theorem \ref{thm:ispi}. 

\begin{proof}[Proof of Theorem \ref{thm:df1}]
Applying Lemma \ref{lem:halftoone} to \eqref{eq:dfeq01}, we obtain
\begin{equation}
\label{eq:dfpr01}
\rho_2^2\pp_t u (x,t)
-(\rho_1\pp_t -\cA_1)^2 u(x,t)
=\wF(x,t),\quad (x,t)\in Q,
\end{equation}
where
\begin{align}
\label{eq:dfpr02}
\wF(x,t)
&=\left[ \rho_2\pp_t^\frac12- (\rho_1\pp_t - \cA_1) \right] \left(\dd  (a(x)\nabla r(x,t))\right) 
+
\frac{\rho_2 \dd (a(x)\nabla r(x,0))}{\sqrt{\pi t}}
\\
&=
a_1(x) \nabla r(x,t)\cdot \nabla \triangle a(x)
+2a_1(x) \sum_{i,j=1}^n (\pp_i \pp_j r(x,t)) (\pp_i\pp_j a(x))
\nonumber \\
&\quad
+a_1(x) \triangle r(x,t) \triangle a(x)
+(\nabla a_1(x)-\bb(x))\cdot (\nabla r(x,t)\cdot \nabla)\nabla a(x)
\nonumber \\
&\quad
+\Biggl[
(\rho_2 \pp_t^{\frac12} -\rho_1 \pp_t)\nabla r(x,t) 
+3 a_1(x)\nabla \triangle r(x,t) 
+\left(\triangle r(x,t)\right) \nabla a_1(x)
\nonumber \\
&\qquad
-\left(\triangle r(x,t)\right) \bb(x)
-c(x) \nabla r(x,t)
+\frac{\rho_2 \nabla r(x,0)}{\sqrt{\pi t}}
\Biggr]\cdot \nabla a(x) 
\nonumber \\
&\quad 
+(\nabla a_1(x)-\bb(x))\cdot (\nabla a(x)\cdot \nabla) \nabla r(x,t)
\nonumber \\
&\quad
+\Biggl[
(\rho_2 \pp_t^{\frac12} -\rho_1 \pp_t)\triangle r(x,t) 
+\left(\nabla  a_1(x)\cdot \nabla \triangle r(x,t)\right) 
+a_1(x)\triangle^2 r(x,t) 
\nonumber \\
&\qquad
-\left(\bb(x) \cdot \nabla\triangle r(x,t) \right)
-c(x) \triangle r(x,t)
+
\frac{\rho_2 \triangle r(x,0)}{\sqrt{\pi t}}
\Biggr]a(x),\quad
 (x,t) \in Q
\nonumber .
\end{align}

Setting $y=\pp_t u$, $z=\pp_t^2 u$ in $Q$ and differentiating 
\eqref{eq:dfpr01} with respect to $t$, we have
\begin{align}
\label{eq:dfpr06}
&
\rho_2^2\pp_t y (x,t)
-(\rho_1\pp_t -\cA_1)^2 y(x,t)
=\pp_t \wF(x,t),		& (x,t)\in Q,\\
\label{eq:dfpr07}
&
\rho_2^2\pp_t z (x,t)
-(\rho_1\pp_t -\cA_1)^2 z(x,t)
=\pp_t^2 \wF(x,t),	& (x,t)\in Q.
\end{align}
Since $u(x,t)=0$, $(x,t)\in\pp\Omega\times(0,T)$, we see that
\begin{equation*}
y(x,t)=z(x,t)=0,\quad (x,t)\in\pp\Omega\times(0,T).
\end{equation*}

Fixing $\la>0$ and applying Theorem \ref{thm:ce0b} ($p=1$) to \eqref{eq:dfpr06} and \eqref{eq:dfpr07} in $Q_\delta$, we have
\begin{align}
\label{eq:dfpr08}
&
\int_{Q_\delta}
\Biggl[
(s\va_{\delta,1})^2
\left( 
|\nabla\pp_t y|^2
+
|\nabla\pp_t z|^2
\right)  
\\
&\qquad
+
(s\va_{\delta,1})^3
\left(
|\nabla(\rho_1\pp_t - \cA_1)y|^2
+
|\nabla(\rho_1\pp_t - \cA_1)z|^2
\right)
\nonumber \\
&\qquad
+
(s\va_{\delta,1})^4
\left( 
|\pp_t y|^2
+
|\pp_t z|^2
+\sum_{i,j=1}^n|\pp_i \pp_j y|^2
+\sum_{i,j=1}^n|\pp_i \pp_j z|^2
\right)  \nonumber\\
&\qquad
+
(s\va_{\delta,1})^6
\left(
 |\nabla y|^2
 + |\nabla z|^2
\right)
+
(s\va_{\delta,1})^8
\left(
|y|^2
+
|z|^2
\right)
\Biggr]
e^{2s\psi_{\delta,1}}\,dxdt
\nonumber\\
&
\leq
C
\int_{Q_\delta} 
(s\va_{\delta,1})^2
\left(
|\pp_t \wF|^2
+
|\pp_t^2 \wF|^2
\right) e^{2s\psi_{\delta,1}}\,dxdt
+C \widehat{B},
\nonumber
\end{align}
where
\begin{align*}
\widehat{B}=&  \int_{\Sigma_\delta}
\Biggl[
(s\va_{\delta,1})^2
\left(
|\nabla \pp_t y|^2 
+
|\nabla \pp_t z|^2 
\right)\\
&\qquad
+
(s\va_{\delta,1})^3
\left(
|\nabla \pp_t^{\frac12} y|^2
+
|\nabla \pp_t^{\frac12} z|^2
\right) 
+ 
(s\va_{\delta,1})^6
\left(
|\nabla y|^2
+
|\nabla z|^2
\right)
\Biggr]
e^{2s\psi_{\delta,1}}
\,dSdt.
\end{align*}
As we have seen in the proof of Theorem \ref{thm:ispb}, we may obtain $\widehat{B}\leq C(s) B^2$.

Note that 
\begin{align*}
&\int_{Q_\delta} 
(s\va_{\delta,1})^2
\left(
|\pp_t \wF|^2
+
|\pp_t^2 \wF|^2
\right) e^{2s\psi_{\delta,1}}\,dxdt \\
&
\leq 
C
\int_{Q_\delta} 
(s\va_{\delta,1})^2
\left(
|\nabla \triangle a|^2
+ 
\sum_{|\alpha| \leq 2} |\pp_x^\alpha a|^2
\right)
e^{2s\psi_{\delta,1}}\,dxdt. 
\end{align*}
This together with \eqref{eq:dfpr08} gives
\begin{align}
\label{eq:dfpr09}
&
\int_{Q_\delta}
\Biggl[
(s\va_{\delta,1})^2
\left( 
|\nabla\pp_t y|^2
+
|\nabla\pp_t z|^2
\right)  
\\
&\qquad
+
(s\va_{\delta,1})^3
\left(
|\nabla(\rho_1\pp_t - \cA_1)y|^2
+
|\nabla(\rho_1\pp_t - \cA_1)z|^2
\right)
\nonumber \\
&\qquad
+
(s\va_{\delta,1})^4
\left( 
|\pp_t y|^2
+
|\pp_t z|^2
+\sum_{i,j=1}^n|\pp_i \pp_j y|^2
+\sum_{i,j=1}^n|\pp_i \pp_j z|^2
\right)  \nonumber\\
&\qquad
+
(s\va_{\delta,1})^6
\left(
 |\nabla y|^2
 + |\nabla z|^2
\right)
+
(s\va_{\delta,1})^8
\left(
|y|^2
+
|z|^2
\right)
\Biggr]
e^{2s\psi_{\delta,1}}\,dxdt
\nonumber\\
&
\leq
C
\int_{Q_\delta} 
(s\va_{\delta,1})^2
\left(
|\nabla \triangle a|^2
+ 
\sum_{|\alpha| \leq 2} |\pp_x^\alpha a|^2
\right)
e^{2s\psi_{\delta,1}}\,dxdt
+C(s) B^2.
\nonumber
\end{align}

Let us expand the left-hand side of \eqref{eq:dfpr01}. We have
\begin{equation*}
\rho_2^2\pp_t u (x,t)
-\rho_1^2\pp_t^2 u(x,t) +2\rho_1\pp_t \cA_1 u(x,t) -\cA_1^2 u(x,t)
=\wF(x,t),
\quad
(x,t)\in Q.
\end{equation*}
Moreover we have
\begin{equation*}
\rho_2^2\nabla\pp_t u (x,t)
-\rho_1^2\nabla\pp_t^2 u(x,t) +2\rho_1\nabla\pp_t \cA_1 u(x,t) -\nabla\cA_1^2 u(x,t)
=\nabla\wF(x,t),
\ 
(x,t)\in Q.
\end{equation*}

In particular at $t=t_0$, we have
\begin{equation}
\label{eq:dfpr03}
\rho_2^2\pp_t u (x,t_0)
-\rho_1^2\pp_t^2 u(x,t_0) +2\rho_1\pp_t \cA_1 u(x,t_0) -\cA_1^2 u(x,t_0)
=\wF(x,t_0),\  x\in \Omega, 
\end{equation}
and
\begin{equation}
\label{eq:dfpr04}
\rho_2^2\nabla\pp_t u (x,t_0)
-\rho_1^2\nabla\pp_t^2 u(x,t_0) +2\rho_1\nabla\pp_t \cA_1 u(x,t_0) -\nabla\cA_1^2 u(x,t_0)
=\nabla\wF(x,t_0),\  x\in \Omega.
\end{equation}
Taking the weighted $L^2$ norm of \eqref{eq:dfpr03} and \eqref{eq:dfpr04} in $\Omega$, we obtain
\begin{align}
\label{eq:dfpr05}
&
\int_{\Omega} \left(|\wF(x,t_0)|^2+|\nabla\wF(x,t_0)|^2\right)e^{2s\psi_{\delta,1}(x,t_0)}\,dx
\\
&\leq
C \sum_{k=1}^6 \wJ_k+
C
\int_{\Omega}  \sum_{|\alpha|\leq 5} |\pp_x^\alpha u(x,t_0)|^2e^{2s\psi_{\delta,1}(x,t_0)}\,dx,
\nonumber
\end{align}
where
\begin{align}
&\wJ_1=
\int_{\Omega} |\pp_t u(x,t_0)|^2e^{2s\psi_{\delta,1}(x,t_0)}\,dx,
&&\wJ_2=
\int_{\Omega} |\pp_t^2 u(x,t_0)|^2e^{2s\psi_{\delta,1}(x,t_0)}\,dx,
\nonumber \\
&\wJ_3=
\int_{\Omega} |\pp_t \cA_1 u(x,t_0)|^2e^{2s\psi_{\delta,1}(x,t_0)}\,dx,
&&\wJ_4=
\int_{\Omega} |\nabla \pp_t u(x,t_0)|^2e^{2s\psi_{\delta,1}(x,t_0)}\,dx,
\nonumber \\
&\wJ_5=
\int_{\Omega} |\nabla \pp_t^2 u(x,t_0)|^2e^{2s\psi_{\delta,1}(x,t_0)}\,dx,
&&\wJ_6=
\int_{\Omega} |\nabla \pp_t \cA_1 u(x,t_0)|^2e^{2s\psi_{\delta,1}(x,t_0)}\,dx.
\nonumber
\end{align}
Henceforth we estimate $\wJ_1$ through $\wJ_6$ by using the Carleman estimate. 
We assume that $s>1$ is large enough to satisfy $s\va_{\delta,1}>1$ in $Q$. 
We note that $\pp_t \psi_{\delta,1}(x,t)=(e^{2\la (\| d_1\|_{C(\overline{\Omega})} - d_1(x))} -e^{-\la d_1(x)})(T-2t)\va_{\delta,1}^2(x,t)$ for $(x,t) \in Q$. 
\begin{align*}
\wJ_1
&=
\int_{t_0-\delta}^{t_0}
\int_{\Omega}
\pp_t \left( |y|^2 e^{2s\psi_{\delta,1}}\right)
\,dxdt  \\
&\leq
C
\int_{t_0-\delta}^{t_0}
\int_{\Omega}
\left(
|\pp_t y||y| + 
s\va_{\delta,1}^2
|y|^2 \right)e^{2s\psi_{\delta,1}}
\,dxdt \\
&\leq
C
\int_{Q_\delta}
s\va_{\delta,1}^2
\left( 
|y|^2+|z|^2
\right)
e^{2s\psi_{\delta,1}}
\,dxdt .
\end{align*}
Combining this with \eqref{eq:dfpr09}, we may estimate the right-hand side of the above inequality and we obtain
\begin{equation}
\label{eq:dfpr101}
\wJ_1
\leq
\frac{C}{s^5}
\int_{Q_\delta} 
\va_{\delta,1}^2
\left(
|\nabla \triangle a|^2
+ 
\sum_{|\alpha| \leq 2} |\pp_x^\alpha a|^2
\right)
e^{2s\psi_{\delta,1}}\,dxdt
+C(s) B^2.
\end{equation}
Similarly, we obtain
\begin{align*}
\wJ_2
&=
\int_{t_0-\delta}^{t_0}
\int_{\Omega}
\pp_t \left( |\pp_t y|^2 e^{2s\psi_{\delta,1}}\right)
\,dxdt  \\
&\leq
C
\int_{t_0-\delta}^{t_0}
\int_{\Omega}
\left(
|\pp_t^2 y||\pp_ty| + 
s\va_{\delta,1}^2
|\pp_t y|^2 \right)e^{2s\psi_{\delta,1}}
\,dxdt \\
&\leq
C
\int_{Q_\delta}
s\va_{\delta,1}^2
\left( 
|\pp_t y|^2+|\pp_t z|^2
\right)
e^{2s\psi_{\delta,1}}
\,dxdt.
\end{align*}
Putting this together with \eqref{eq:dfpr09}, we see that
\begin{equation}
\label{eq:dfpr102}
\wJ_2
\leq
\frac{C}{s}
\int_{Q_\delta} 
\va_{\delta,1}^2
\left(
|\nabla \triangle a|^2
+ 
\sum_{|\alpha| \leq 2} |\pp_x^\alpha a|^2
\right)
e^{2s\psi_{\delta,1}}\,dxdt
+C(s) B^2.
\end{equation}
Moreover we have
\begin{align*}
\wJ_3
&=
\int_{t_0-\delta}^{t_0}
\int_{\Omega}
\pp_t \left( |\cA_1 y|^2 e^{2s\psi_{\delta,1}}\right)
\,dxdt  \\
&\leq
C
\int_{t_0-\delta}^{t_0}
\int_{\Omega}
\left(  
| \cA_1\pp_t y||\cA_1 y| + 
s\va_{\delta,1}^2
|\cA_1 y|^2 \right)e^{2s\psi_{\delta,1}}
\,dxdt \\
&\leq
C
\int_{Q_\delta}
s\va_{\delta,1}^2
\left( 
|\cA_1 y|^2+|\cA_1 z|^2
\right)
e^{2s\psi_{\delta,1}}
\,dxdt  \\
&\leq
C
\int_{Q_\delta}
s\va_{\delta,1}^2
\left( 
\sum_{|\alpha| \leq 2}
|\pp_x^\alpha y|^2
+
\sum_{|\alpha|\leq 2}
|\pp_x^\alpha z|^2
\right)
e^{2s\psi_{\delta,1}}
\,dxdt  
\end{align*}
This together with \eqref{eq:dfpr09} gives
\begin{equation}
\label{eq:dfpr103}
\wJ_3
\leq
\frac{C}{s}
\int_{Q_\delta} 
\va_{\delta,1}^2
\left(
|\nabla \triangle a|^2
+ 
\sum_{|\alpha| \leq 2} |\pp_x^\alpha a|^2
\right)
e^{2s\psi_{\delta,1}}\,dxdt
+C(s) B^2.
\end{equation}
We have
\begin{align*}
\wJ_4
&=
\int_{t_0-\delta}^{t_0}
\int_{\Omega}
\pp_t \left( |\nabla y|^2 e^{2s\psi_{\delta,1}}\right)
\,dxdt  \\
&\leq
C
\int_{t_0-\delta}^{t_0}
\int_{\Omega}
\left(
|\nabla  \pp_t y||\nabla  y| + 
s\va_{\delta,1}^2|\nabla y|^2 \right)e^{2s\psi_{\delta,1}}
\,dxdt \\
&\leq
C
\int_{Q_\delta}
s\va_{\delta,1}^2
\left( 
|\nabla y|^2+|\nabla z|^2
\right)
e^{2s\psi_{\delta,1}}
\,dxdt .
\end{align*}
Combining this with \eqref{eq:dfpr09}, we may estimate the right-hand side of the above inequality and we obtain
\begin{equation}
\label{eq:dfpr104}
\wJ_4
\leq
\frac{C}{s^3}
\int_{Q_\delta} 
\va_{\delta,1}^2
\left(
|\nabla \triangle a|^2
+ 
\sum_{|\alpha| \leq 2} |\pp_x^\alpha a|^2
\right)
e^{2s\psi_{\delta,1}}\,dxdt
+C(s) B^2.
\end{equation}
Similarly, we obtain
\begin{align*}
\wJ_5
&=
\int_{t_0-\delta}^{t_0}
\int_{\Omega}
\pp_t \left( |\nabla \pp_t y|^2 e^{2s\psi_{\delta,1}}\right)
\,dxdt  \\
&\leq
C
\int_{t_0-\delta}^{t_0}
\int_{\Omega}
\left(
|\nabla \pp_t^2 y||\nabla \pp_ty| + 
s\va_{\delta,1}^2|\nabla \pp_t y|^2 \right)e^{2s\psi_{\delta,1}}
\,dxdt \\
&\leq
C
\int_{Q_\delta}
s\va_{\delta,1}^2
\left( 
|\nabla \pp_t y|^2+|\nabla \pp_t z|^2
\right)
e^{2s\psi_{\delta,1}}
\,dxdt.
\end{align*}
Putting this together with \eqref{eq:dfpr09}, we see that
\begin{equation}
\label{eq:dfpr105}
\wJ_5
\leq
C
\int_{Q_\delta} 
s\va_{\delta,1}^2
\left(
|\nabla \triangle a|^2
+ 
\sum_{|\alpha| \leq 2} |\pp_x^\alpha a|^2
\right)
e^{2s\psi_{\delta,1}}\,dxdt
+C(s) B^2.
\end{equation}
Moreover we have
\begin{align*}
\wJ_6
&=
\int_{t_0-\delta}^{t_0}
\int_{\Omega}
\pp_t \left( |\nabla \cA_1 y|^2 e^{2s\psi_{\delta,1}}\right)
\,dxdt  \\
&\leq
C
\int_{t_0-\delta}^{t_0}
\int_{\Omega}
\left(  
|\nabla \cA_1\pp_t  y||\nabla \cA_1 y| + 
s\va_{\delta,1}^2|\nabla\cA_1 y|^2 \right)e^{2s\psi_{\delta,1}}
\,dxdt \\
&\leq
C
\int_{Q_\delta}
s\va_{\delta,1}^2
\left( 
|\nabla \cA_1 y|^2+|\nabla \cA_1 z|^2
\right)
e^{2s\psi_{\delta,1}}
\,dxdt  \\
&\leq
C
\int_{Q_\delta}
s\va_{\delta,1}^2
\Bigl(
|\nabla \pp_t y|^2
+
|\nabla (\rho_1\pp_t-\cA_1) y|^2
\\
&\qquad \qquad \qquad \quad
+
|\nabla \pp_t z|^2
+
|\nabla (\rho_1\pp_t-\cA_1) z|^2
\Bigr)
e^{2s\psi_{\delta,1}}
\,dxdt  
\end{align*}
This together with \eqref{eq:dfpr09} gives
\begin{equation}
\label{eq:dfpr106}
\wJ_6
\leq
C
\int_{Q_\delta} 
s\va_{\delta,1}^2
\left(
|\nabla \triangle a|^2
+ 
\sum_{|\alpha| \leq 2} |\pp_x^\alpha a|^2
\right)
e^{2s\psi_{\delta,1}}\,dxdt
+C(s) B^2.
\end{equation}

Summing up the estimate of \eqref{eq:dfpr05} through \eqref{eq:dfpr106}, we have
\begin{align}
\label{eq:dfpr16}
&
\int_{\Omega} \left(|\wF(x,t_0)|^2+|\nabla\wF(x,t_0)|^2\right)e^{2s\psi_{\delta,1}(x,t_0)}\,dx
\\
&\leq
C
\int_{Q_\delta} 
s\va_{\delta,1}^2
\left(
|\nabla \triangle a|^2
+ 
\sum_{|\alpha| \leq 2} |\pp_x^\alpha a|^2
\right)e^{2s\psi_{\delta,1}}\,dxdt
\nonumber \\
&\quad
+
C
\int_{\Omega}  \sum_{|\alpha|\leq 5} |\pp_x^\alpha u(x,t_0)|^2e^{2s\psi_{\delta,1}(x,t_0)}\,dx +C(s) B^2.
\nonumber
\end{align}
Let us estimate the left-hand side of the inequality \eqref{eq:dfpr16} from below. 
By \eqref{eq:dfpr02} at $t=t_0$, we have
\begin{align}
\label{eq:dfpr17}
&a_1(x) \nabla r(x,t_0)\cdot \nabla \triangle a(x) \\
&=
\wF(x,t_0)
-2a_1(x) \sum_{i,j=1}^n (\pp_i \pp_j r(x,t_0)) (\pp_i\pp_j a(x))
\nonumber  \\
&\quad
-a_1(x) \triangle r(x,t_0) \triangle a(x)
-(\nabla a_1(x)-\bb(x))\cdot (\nabla r(x,t_0)\cdot \nabla)\nabla a(x)
\nonumber \\
&\quad
-\Biggl[
(\rho_2 \pp_t^{\frac12} -\rho_1 \pp_t)\nabla r(x,t_0) 
+3 a_1(x)\nabla \triangle r(x,t_0) 
+\left(\triangle r(x,t_0)\right) \nabla a_1(x)
\nonumber \\
&\qquad
-\left(\triangle r(x,t_0)\right) \bb(x)
-c(x) \nabla r(x,t_0)
+\frac{\rho_2\nabla r(x,0)}{\sqrt{\pi t_0}}
\Biggr]\cdot \nabla a(x) 
\nonumber \\
&\quad 
-(\nabla a_1(x)-\bb(x))\cdot (\nabla a(x)\cdot \nabla) \nabla r(x,t_0)
\nonumber \\
&\quad
-\Biggl[
(\rho_2 \pp_t^{\frac12} -\rho_1 \pp_t)\triangle r(x,t_0) 
+\left(\nabla  a_1(x)\cdot\nabla \triangle r(x,t_0)\right) 
+a_1(x)\triangle^2 r(x,t_0) 
\nonumber \\
&\qquad
-\left(\bb(x) \cdot \nabla\triangle r(x,t_0)\right) 
-c(x) \triangle r(x,t_0)
+\frac{\rho_2 \triangle r(x,0)}{\sqrt{\pi t_0}}
\Biggr]a(x),\quad
 x \in \Omega
\nonumber .
\end{align}
Note that 
\begin{equation*}
|\nabla r(x,t_0)\cdot \nabla d_1(x)| \geq m_1>0, \quad x\in \overline{\Omega}, 
\end{equation*}
and $a\in H^4(\Omega)$ satisfies $a(x)=0$, $x\in D$. 
Let us apply the Lemma \ref{lem:ce3rd1} to \eqref{eq:dfpr17} in 
$\Omega$. Then we obtain
\begin{align}
\nonumber 
&
\int_\Omega
\Biggl[
s\va_{\delta,1}(x,t_0)
\sum_{i,j,k=1}^n|\pp_i \pp_j \pp_k a(x)|^2
+
(s\va_{\delta,1}(x,t_0))^{2}
|\nabla \triangle a(x)|^2
\\
&\qquad
+
(s\va_{\delta,1}(x,t_0))^{3}
\sum_{i,j=1}^n|\pp_i \pp_j a(x)|^2
\nonumber \\
&\qquad
+(s\va_{\delta,1}(x,t_0))^{5}
\left(|\nabla a(x)|^2+ |a(x)|^2 \right)
\Biggr]
e^{2s \psi_{\delta,1}(x,t_0)}\,dx
\nonumber \\
&\leq
\int_{\Omega} \left(|\wF(x,t_0)|^2+|\nabla\wF(x,t_0)|^2\right)e^{2s\psi_{\delta,1}(x,t_0)}\,dx 
+
\int_{\Omega} \sum_{|\alpha|\leq 3} |\pp_x^\alpha a(x) |^2 e^{2s\psi_{\delta,1}(x,t_0)}\,dx 
\nonumber 
\end{align}
Taking sufficiently large $s>0$, we may absorb the second term on the right-hand side of the above inequality and we get
\begin{align}
\nonumber 
&
\int_\Omega
\Biggl[
s\va_{\delta,1}(x,t_0)
\sum_{i,j,k=1}^n|\pp_i \pp_j \pp_k a(x)|^2
+
(s\va_{\delta,1}(x,t_0))^{2}
|\nabla \triangle a(x)|^2
\\
&\qquad
+
(s\va_{\delta,1}(x,t_0))^{3}
\sum_{i,j=1}^n|\pp_i \pp_j a(x)|^2
\nonumber \\
&\qquad
+(s\va_{\delta,1}(x,t_0))^{5}
\left(|\nabla a(x)|^2+ |a(x)|^2 \right)
\Biggr]
e^{2s \psi_{\delta,1}(x,t_0)}\,dx
\nonumber \\
&\leq
\int_{\Omega} \left(|\wF(x,t_0)|^2+|\nabla\wF(x,t_0)|^2\right)e^{2s\psi_{\delta,1}(x,t_0)}\,dx 
\nonumber 
\end{align}

Combining this with \eqref{eq:dfpr16}, we obtain
\begin{align}
\label{eq:dfpr19}
&
\int_\Omega
\Biggl[
s\va_{\delta,1}(x,t_0)
\sum_{i,j,k=1}^n|\pp_i \pp_j \pp_k a(x)|^2
+
(s\va_{\delta,1}(x,t_0))^{2}
|\nabla \triangle a(x)|^2
\\
&\qquad
+
(s\va_{\delta,1}(x,t_0))^{3}
\sum_{i,j=1}^n|\pp_i \pp_j a(x)|^2
\nonumber \\
&\qquad
+(s\va_{\delta,1}(x,t_0))^{5}
\left(|\nabla a(x)|^2+ |a(x)|^2 \right)
\Biggr]
e^{2s \psi_{\delta,1}(x,t_0)}\,dx
\nonumber \\
&\leq
C
\int_{Q_\delta} 
s\va_{\delta,1}^2
\left(
|\nabla \triangle a|^2
+ 
\sum_{|\alpha| \leq 2} |\pp_x^\alpha a|^2
\right)e^{2s\psi_{\delta,1}}\,dxdt
\nonumber \\
&\quad
+
C
\int_{\Omega}  \sum_{|\alpha|\leq 5} |\pp_x^\alpha u(x,t_0)|^2e^{2s\psi_{\delta,1}(x,t_0)}\,dx +C(s) B^2.
\nonumber\\
&\leq
C
\int_{\Omega} 
s\va_{\delta,1}^2(x,t_0)
\left(
|\nabla \triangle a(x)|^2
+ 
\sum_{|\alpha| \leq 2} |\pp_x^\alpha a(x)|^2
\right)e^{2s\psi_{\delta,1}(x,t_0)}\,dx
\nonumber \\
&\quad
+
C
\int_{\Omega}  \sum_{|\alpha|\leq 5} |\pp_x^\alpha u(x,t_0)|^2e^{2s\psi_{\delta,1}(x,t_0)}\,dx +C(s)B^2.
\nonumber
\end{align}
In the last inequality, we used the fact that 
\begin{equation*}
\va_{\delta,1}^2(x,t)e^{2s\psi_{\delta,1}(x,t)}\leq
\va_{\delta,1}^2(x,t_0)e^{2s\psi_{\delta,1}(x,t_0)}, \quad (x,t)\in Q_{\delta}
\end{equation*}
for large $s>0$. 

Choose sufficiently large $s>0$ and absorb the first term on the right-hand side of \eqref{eq:dfpr19} 
into the left-hand side. Then we obtain
\begin{align}
\label{eq:dfpr20}
&
\int_\Omega
\Biggl[
s\va_{\delta,1}(x,t_0)
\sum_{i,j,k=1}^n|\pp_i \pp_j \pp_k a(x)|^2
+
(s\va_{\delta,1}(x,t_0))^{2}
|\nabla \triangle a(x)|^2
\\
&\qquad
+
(s\va_{\delta,1}(x,t_0))^{3}
\sum_{i,j=1}^n|\pp_i \pp_j a(x)|^2
\nonumber \\
&\qquad
+(s\va_{\delta,1}(x,t_0))^{5}
\left(|\nabla a(x)|^2+ |a(x)|^2 \right)
\Biggr]
e^{2s \psi_{\delta,1}(x,t_0)}\,dx \nonumber \\
&\leq
C
\int_{\Omega}  \sum_{|\alpha|\leq 5} |\pp_x^\alpha u(x,t_0)|^2e^{2s\psi_{\delta,1}(x,t_0)}\,dx +C(s)B^2.
\nonumber
\end{align}
Fix $s>0$. Noting that $ \va_{\delta,1}(\cdot,t_0)e^{2s\psi_{\delta,1}(\cdot,t_0)}$ has its upper and lower bound in $\overline{\Omega}$, we see that 
\begin{equation*}
\|a \|_{H^3(\Omega)} 
\leq C \| u (\cdot, t_0) \|_{H^5(\Omega)} +CB.
\end{equation*}
Thus we obtain the stability estimate \eqref{df:seb}.  
\end{proof}
\begin{proof}[Proof of Theorem \ref{thm:df2}] 
We may prove Theorem \ref{thm:df2} in the same way as Theorem \ref{thm:df1}. 
\end{proof}

\section*{Acknowledgments}
The authors acknowledge support from the Japan Society for the Promotion of Science (JSPS) A3 foresight program: Modeling and Computation of Applied Inverse Problems. MM also acknowledges support from 
Grant-in-Aid for Scientific Research (17K05572 and 17H02081) of JSPS.

%

%
%
%
%

\end{document}